\newcommand{\linext}{{\mathcal{L}}}
\newcommand{\Stab}{{\operatorname{Stab}}}
\newcommand{\CST}{{\operatorname{CST}}}
\newcommand{\SYT}{{\operatorname{SYT}}}
\newcommand{\symm}{{\mathfrak{S}}}
\newcommand{\ang}[1]{\langle #1 \rangle}
\newcommand{\bkp}{\mathcal{BK}_P}
\newcommand{\bkq}{\mathcal{BK}_Q}
\DeclareRobustCommand{\rvdots}{%
  \vbox{
    \baselineskip4\p@\lineskiplimit\z@
    \kern-\p@
    \hbox{.}\hbox{.}\hbox{.}
  }}
\theoremstyle{plain}
\newtheorem{thm}{Theorem}[section]
\newtheorem{prop}[thm]{Proposition}
\newtheorem{cor}[thm]{Corollary}
\newtheorem{lemma}[thm]{Lemma}
\newtheorem{conjecture}[thm]{Conjecture}
\theoremstyle{definition}
\newtheorem{definition}[thm]{Definition}
\theoremstyle{remark}
\newtheorem{question}[thm]{Question}
\newtheorem{remark}[thm]{Remark}
\newtheorem{example}[thm]{Example}
\title[Bender--Knuth involutions on linear extensions of posets]{Bender--Knuth involutions\\ on linear extensions of posets}
\author[Chiang]{Judy Hsin-Hui Chiang}
\address{Department of Mathematics, University of Illinois, Urbana, IL} \email{hsinhui2@illinois.edu}
\thanks{JC -- Present address: School of Mathematics, University of Minnesota, Minneapolis, MN. Email: \href{mailto:hchiang@umn.edu}{\it hchiang@umn.edu}}
\author[Hoang]{Anh Trong Nam Hoang}
\address{School of Mathematics, University of Minnesota, Minneapolis, MN}
\email{hoang278@umn.edu}
\author[Kendall]{Matthew Kendall}
\address{Department of Mathematics, Princeton University, Princeton, NJ}
\email{mskendall@princeton.edu}
\author[Lynch]{Ryan Lynch}
\address{Department of Mathematics, University of Notre Dame, Notre Dame, IN} \email{rlynch6@nd.edu}
\thanks{RL -- Present address: School of Mathematics, University of Minnesota, Minneapolis, MN. Email: \href{mailto:rlynch@umn.edu}{\it rlynch@umn.edu}}
\author[Nguyen]{Son Nguyen}
\address{School of Mathematics, University of Minnesota, Minneapolis, MN}
\email{nguy4309@umn.edu}
\author[Przybocki]{Benjamin Przybocki}
\address{Department of Mathematics, Stanford University, Stanford, CA}
\email{benprz@stanford.edu}
\author[Xia]{Janabel Xia}
\address{Department of Mathematics, Massachusetts Institute of Technology, Cambridge, MA}
\email{janabel@mit.edu}
\date{\today}
\begin{document}
\ytableausetup{centertableaux}

\begin{abstract}
    We study the permutation group $\bkp$ generated by Bender--Knuth moves on linear extensions of a poset $P$, an analog of the Berenstein--Kirillov group on column-strict tableaux. We explore the group relations, with an emphasis on identifying posets $P$ for which the cactus relations hold in $\bkp$. We also examine $\bkp$ as a subgroup of the symmetric group $\symm_{\linext(P)}$ on the set of linear extensions of $P$ with the focus on analyzing posets $P$ for which $\bkp = \symm_{\linext(P)}$.
\end{abstract}

\maketitle

\tableofcontents


\section{Introduction}\label{sec:intro}

First introduced by Bender and Knuth \cite{benderknuth1972} in their study of enumerations of plane partitions and Schur polynomials, the {\it Bender--Knuth (BK) moves}, a certain family of involutions on the set of column-strict (semi-standard) Young tableaux, have seen a wide range of applications across different areas of combinatorics. They were shown to be equivalent to tableau switching, an involution on pairs of column-strict tableaux, on horizontal border strips of two adjacent letters, together with a swap of these labels \cite{bss96,pak-vallejo}. Berenstein and Kirillov \cite{berenstein-kirilov} studied an extension of these involutions, called piecewise-linear BK moves, acting on Gelfand--Tsetlin patterns and the relations they satisfy. Other interpretations and applications of BK moves appeared in the context of crystals for finite-dimensional complex reductive Lie
algebras (e.g., \cite{halacheva}), shifted tableaux (e.g., \cite{stembridge,rodrigues}), and Grothendieck polynomials (e.g., \cite{ikeda-shimazaki,galashin-grinberg-liu}).

Informally, the BK moves $t_i$ act on a column-strict tableau by fixing an $i$ (resp. $i+1$) when there is an $i+1$ below (resp. $i$ above), and swapping the contents of the remaining letters $i$ and $i+1$ in each row. The {\it (combinatorial) Berenstein--Kirillov (BK) group} $\mathcal{BK}$ is defined to be the free group generated by variables $t_i$ for all $i \in \mathbb{Z}_{> 0}$, modulo the relations satisfied by the BK moves $t_i$ when acting on all column-strict tableaux. This group was formally introduced by Berenstein and Kirillov \cite{berenstein-kirilov}, whose work also included a list of relations in this group. More recently, Chmutov, Glick, and Pylyavskyy \cite{CGP} related the BK moves to the {\it cactus group} $\mathcal{C}_n$, the fundamental group of the moduli space of marked real genus zero stable curves (see, e.g., \cite{devadoss,davis-januszkiewicz-scott, henriques-kamnitzer,halacheva}). They showed that the BK moves $t_i$ acting on all column-strict tableaux satisfy the defining relations of $\mathcal{C}_n$, giving a group homomorphism from $\mathcal{C}_n$ to the subgroup $\mathcal{BK}_n \subset \mathcal{BK}$ generated by $\{ t_1, \ldots, t_{n-1} \}$ and yielding new relations previously unknown in $\mathcal{BK}$. The same subject was concurrently investigated by Berenstein and Kirillov \cite{berenstein-kirillov-RIMS} using a purely group-theoretic approach. These results serve as one of the motivations for our work.

Recall that a {\it linear extension} of a poset $P$ is a linear order that is compatible with $P$. Haiman \cite{haiman} and Malvenuto and Reutenauer \cite{malvenuto-reutenauer} introduced an analog of the BK moves $t_i$ on linear extensions of a poset $P$, which swap two adjacent letters $i$ and $i+1$ when they label incomparable elements of $P$ and fix them otherwise, and used them to study {\it promotion} and {\it evacuation}, operators on linear extensions first defined by Sch\"utzenberger \cite{schutzenberger1976,schutzenberger77}. A survey on basic properties and generalizations of these operators can be found in \cite{Stanley-promotion-evacuation}.

An important tool to study BK moves on linear extensions is the {\it linear extension graph} of a poset $P$, the graph whose vertices are labelled by linear extensions of $P$ and edges are given by the BK moves that swap corresponding linear extensions. Linear extension graphs were first introduced by Pruesse and Ruskey \cite{pr1991}, and previously used in the study of linear extension generation (see, e.g., \cite{RUSKEY1992,stachowiak1992,west1993,naatz2000,bm2013}) as well as Markov chains on the set of linear extensions (e.g., \cite{ayyer-klee-schilling}). A survey on linear extension graphs can be found in \cite{MassowThesis}.

In this context, we may define an analog of the Berenstein--Kirillov group on linear extensions of a given poset $P$, denoted by $\bkp$. The goal of this paper is to study properties of this group $\bkp$, attempting to characterize the classes of posets $P$ for which $\bkp$ enjoys various properties.


\subsection{Outline of the paper}\label{subsec:outline}

Section~\ref{sec:basics} provides basic definitions and constructions that are fundamental to this paper, including column-strict tableaux, linear extensions of posets, and BK moves on various combinatorial objects. Given a poset $P$, the {\it Berenstein--Kirillov group of $P$}, $\bkp$, is defined to be the permutation group generated by the BK moves $t_i$ on linear extensions of $P$.

In Section~\ref{sec:rel}, we focus on the relations in $\bkp$. In particular, we will identify the posets $P$ for which the trivialization relations (Proposition~\ref{prop:rel_ti=1}) and braid relations (Proposition~\ref{prop:rel_braid}) hold; note that the latter was previously observed by \cite{ayyer-klee-schilling}.
Progress is made towards understanding the posets $P$ for which the {\it cactus relations}, fundamental relations in cactus groups, hold in $\bkp$.

Finally, Sections~\ref{sec:symm} and~\ref{sec:stab} are dedicated to the study of the group $\bkp$ as a permutation group on the set $\linext(P)$ of all linear extensions of $P$. Section~\ref{sec:symm} focuses on understanding posets $P$ for which $\bkp$ equals the full symmetric group on $\linext(P)$. More specifically, we classify all disconnected posets (Theorem~\ref{thm:symm_disconn}) and series-parallel posets (Corollary~\ref{cor:symm_seri_paral}), and exhibit a few other families of connected posets with this property. Section~\ref{sec:stab} explores the cardinality of $\bkp$.


\subsection{Acknowledgments}

We are indebted to Vic Reiner for initiating the subject of study and providing extremely valuable guidance. We would like to thank Gregg Musiker and the 2022 University of Minnesota Combinatorics and Algebra REU staff for organizing the program. Finally, we thank Pasha Pylyavskyy, Joel Kamnitzer, Iva Halacheva, Sylvester Zhang for helpful discussions, and the referee for constructive feedback. This research was partially supported by RTG grant NSF/DMS-1745638.


\section{Basic definitions and constructions}\label{sec:basics}

In this section, we will provide the background on several algebraic and combinatorial objects that are fundamental to this paper, including column-strict and standard Young tableaux, linear extensions, Bender--Knuth involutions, and poset operations.


\subsection{Column-strict tableaux and Bender--Knuth moves}\label{subsec:BK_CST}

Recall that a {\it partition} $\lambda$ of a positive integer $N$ is a tuple of integers $\lambda = (\lambda_1, \lambda_2, \ldots, \lambda_n)$ such that $\lambda_1 \ge \lambda_2 \ge \ldots \ge \lambda_n \ge 0$ and $\sum_{i=1}^n \lambda_i = N$. The {\it Young diagram of shape} $\lambda$ is a finite collection of boxes arranged in left-justified rows, where the $i^{\mathrm{th}}$ row has $\lambda_i$ boxes. A {\it column-strict (semi-standard) tableau} $T$ of shape $\lambda$ is a filling of the Young diagram of shape $\lambda$ that is weakly increasing along the  rows and strictly increasing along the columns. A tableau $T$ is said to have {\it content} $\alpha = (\alpha_1, \alpha_2, \ldots,\alpha_n)$ if there are $\alpha_i$ occurrences of $i$ in $T$ for each $i=1,\ldots,n$; when the content is $\alpha = (1,1,\ldots,1)$, the tableau $T$ is called a {\it standard Young tableau}. Denote the set of all column-strict tableaux of shape $\lambda$ and content $\alpha$ by $\CST(\lambda, \alpha)$, and the set of standard Young tableaux of shape $\lambda$ by $\SYT(\lambda)$.

\begin{example}
The Young diagram of shape $(3,2)$ is
\[(3,2) = \;
    \ydiagram{3,2} \; .
\]
The collection of column-strict tableaux of shape $(3,2)$ and content $(1,2,1,1)$ is
\[
    \CST((3,2),(1,2,1,1)) = 
    \left\{ 
    \begin{footnotesize}
    \; \begin{ytableau} 
    1 & 2 & 2  \\ 
    3  & 4 & \none 
    \end{ytableau} \; , \;
    \begin{ytableau}
      1 & 2 & 3  \\
      2  & 4 & \none
    \end{ytableau} \; , \;
    \begin{ytableau}
      1 & 2 & 4  \\
      2  & 3 & \none
    \end{ytableau} \;
    \end{footnotesize}
    \right\}.
\]
Meanwhile, that of standard Young tableaux of the same shape is
\[
    \SYT((3,2)) = 
    \left\{ 
    \begin{footnotesize}
    \; \begin{ytableau} 
    1 & 2 & 3  \\ 
    4 & 5 & \none 
    \end{ytableau} \; , \;
    \begin{ytableau}
    1 & 2 & 4  \\
    3 & 5 & \none
    \end{ytableau} \; , \;
    \begin{ytableau}
    1 & 2 & 5  \\
    3 & 4 & \none
    \end{ytableau} \; , \;
    \begin{ytableau}
    1 & 3 & 4  \\
    2 & 5 & \none
    \end{ytableau} \; , \;
    \begin{ytableau}
    1 & 3 & 5  \\
    2 & 4 & \none
    \end{ytableau} \;
    \end{footnotesize}
    \right\}.
\]
\end{example}

The primary combinatorial object of interest in this paper is the {\it Bender--Knuth moves} (or {\it BK moves} for short), which were originally defined on column-strict tableaux by Bender and Knuth \cite{benderknuth1972} and further studied by several authors, e.g., \cite{SCHUTZENBERGER1972,berenstein-zelevenski,berenstein-kirilov,CGP}. Fix a partition $\lambda$, and let $T$ be a column-strict tableau of shape $\lambda$ and content $\alpha = (\alpha_1, \ldots, \alpha_n)$.

\begin{definition}\label{defn:BK_CST}
The {\it Bender--Knuth move} $t_i$ for $1 \le i \le n-1$ is an involution (i.e., a permutation of order $2$) of the set of column-strict tableaux of shape $\lambda$ that sends $T$ to the tableau obtained from the following procedure:
\begin{enumerate}
    \item Let $S$ be the skew tableau obtained by taking only the boxes of $T$ with entry equal $i$ and $i+1$;
    \item Observe that each row of $S$ contains
    \begin{enumerate}
    \item $a$ entries equal to $i$ that lie directly above an $i+1$,
    \item $b$ entries equal to $i$ that are alone in their columns,
    \item $c$ entries equal to $i+1$ that are alone in their columns, and
    \item $d$ entries equal to $i+1$ that lie directly below an $i$
    \end{enumerate}
    for some $a,b,c,d \ge 0$;
    \item Construct a skew tableau $S'$ by swapping $b$ and $c$ in each row of $S$;
    \item Define $t_i(T)$ to be the tableau obtained by replacing $S$ with $S'$ in $T$.
\end{enumerate}
\end{definition}

\begin{example}
Consider the action of the BK move $t_2$ on the tableau
\[ T = \;
\begin{ytableau} 
    1 & 1 & 1 & 1 & 2 & 2 & 2 & 2 & 3 \\ 
    2 & 2 & 3 & 3 & 3 & 4 \\
    3 & 4 & 4 & 5
    \end{ytableau} \; .
\]
The skew tableau $S$ containing only boxes labelled $2$ and $3$ in $T$ is
\[ S = \;
\begin{ytableau} 
    \none & \none & \none & \none & 2 & *(yellow) 2 & *(yellow) 2 & *(yellow) 2 & *(pink) 3 \\
    2 & *(yellow) 2 & *(pink) 3 & *(pink) 3 & 3 \\
    3
    \end{ytableau} \; .
\]
Following Definition~\ref{defn:BK_CST}, observe that in the first row $b = 3$ (yellow boxes) and $c = 1$ (pink boxes), so after swapping we get $b' = 1$ and $c' = 3$. The second row has $b = 1$ and $c = 2$, so we turn a $3$ into a $2$, whereas the third row has $b = c = 0$ and hence is fixed. The new skew tableau $S'$ obtained after this process has the form
\[ S' = \;
\begin{ytableau} 
    \none & \none & \none & \none & 2 & *(pink) 2 & *(yellow) 3 & *(yellow) 3 & *(yellow) 3 \\
    2 & *(pink) 2 & *(pink) 2 & *(yellow) 3 & 3 \\
    3
    \end{ytableau} \; ,
\]
and hence
\[ t_2(T) = \;
\begin{ytableau} 
    1 & 1 & 1 & 1 & 2 & 2 & 3 & 3 & 3 \\ 
    2 & 2 & 2 & 3 & 3 & 4 \\
    3 & 4 & 4 & 5
    \end{ytableau} \; .
\]
\end{example}

Let $\mathcal{BK}$, called the {\it (combinatorial) Berenstein--Kirillov (BK) group}, denote the free group generated by the variables $t_i$ $\left(i \in \mathbb{Z}_{> 0} \right)$ modulo the relations satisfied by the BK moves $t_i$ when acting on all column-strict tableaux of all possible shapes. Note that there exist other variations of the BK moves, e.g., piecewise-linear BK moves on Gelfand--Tsetlin patterns \cite{berenstein-kirilov} and birational BK moves, which lead to different versions of the BK group. It is believed, but not proven, that they coincide. In this paper, we restrict our attention to the combinatorial version of these objects.

The relations in $\mathcal{BK}$ were first studied in a foundational paper of Berenstein and Kirillov \cite{berenstein-kirilov}, which included the following few:
\begin{enumerate}
    \item $t_i^2 = 1$ for all $i \ge 1$;
    \item $(t_i t_j)^2 = 1$ for all $i,j \ge 1$ with $|i - j| \ge 2$;
    \item $(t_1 q_i)^4 = 1$ for all $i\ge 3$, where $q_i = t_1 (t_2 t_1) \ldots (t_i t_{i-1} \ldots t_1)$; and
    \item $(t_1 t_2)^6 = 1$.
\end{enumerate}
More recently, Chmutov, Glick, and Pylyavskyy \cite{CGP} found new relations in this group generalizing Relations (3):
\begin{enumerate}\setcounter{enumi}{4}
    \item $(t_i q_{jk})^2 = 1$ whenever $i+1<j<k$, where $q_{jk} = q_{k-1} q_{k-j} q_{k-1}$,
\end{enumerate}
which interestingly give a group homomorphism from the cactus group $\mathcal{C}_n$ (cf. \cite{devadoss, henriques-kamnitzer,halacheva}) to the subgroup $\mathcal{BK}_n \subset \mathcal{BK}$ generated by $\{ t_1, \ldots, t_{n-1} \}$. For this reason, Relations (5) are called the {\it cactus relations}.
It was remarked by \cite{CGP} that Relation (4) is the only known relation in $\mathcal{BK}$ that does not follow from the relations in cactus groups.
Similar observations were made by Berenstein and Kirillov \cite{berenstein-kirillov-RIMS} using a purely group-theoretic approach. Note that $q_i$ consists of an iterated product of $\partial_j = t_j \dots t_2 t_1$; both of these have been previously studied as operators on column-strict tableaux, called {\it evacuation} and {\it promotion}\footnote{More precisely, the operators $\partial_i$ and $q_i$ as defined here are {\it partial} promotion and evacuation, as they act only on the subtableaux containing the entries $1, 2, \dots, i+1$, instead of the full tableaux.}
respectively, notably by \cite{SCHUTZENBERGER1972,schutzenberger1976,schutzenberger77,berenstein-kirilov}. It is known that evacuation $q_i$ is an involution, and so is $q_{jk}$. Thus the cactus relations are equivalent to the commutativity of $t_i$ and $q_{jk}$ for all $i+1 < j < k$.


\subsection{Bender--Knuth moves on linear extensions}\label{subsec:BK_LE}

Here, we define an analog of the Berenstein--Kirillov group acting on {\it linear extensions of posets}. This involves two steps. First, we specialize the action of the BK moves on column-strict tableaux to standard Young tableaux. Then, we view standard Young tableaux as linear extensions of a certain kind of poset. This allows us to generalize the BK moves on standard Young tableaux to act on linear extensions of arbitrary posets. Thus, our analog of the Berenstein--Kirillov group on linear extensions of posets is in one sense more specific and in another sense more general than the classical Berenstein--Kirillov group on column-strict tableaux. Note that the first step may (and indeed does) introduce new relations satisfied by the BK moves, whereas the second step may result in fewer relations. We discuss these claims in more detail below.

Let $(P, \le_P)$ be an $n$-element poset.

\begin{definition}\label{defn:linext}
A {\it linear extension} of $P$ is a linear order $(\ell,\le_\ell)$ which extends $P$ in the sense that $x \le_P y$ implies $x \le_\ell y$.
\end{definition}

More precisely, a linear extension $\ell$ of $P$ is a bijection $\ell : P \to \{1, \dots, n\} =: [n]$ such that $x <_P y$ implies $\ell(x) < \ell(y)$. Alternatively, a linear extension $\ell$ of $P$ can be interpreted as an ordered list $\ell = (p_1, \ldots, p_n)$ of elements of $P$ such that $p_i < p_j$ implies $i < j$; the index of an element $p$ on this list is precisely its label $\ell(p)$. The set of all linear extensions of $P$ is denoted by $\linext(P)$. The example below shows the poset $P$ defined by the relations $a,b < c,d$ (left) and a linear extension $\ell$ of $P$ (right). Observe that the labelling system on $\ell$ satisfies Definition~\ref{defn:linext}. In the alternative notation, we have $\ell = (b, a, d, c)$.

\begin{center}
    \begin{tikzpicture}
        \draw[black] (0,0) -- (0,1.2);
        \draw[black] (1.2,0) -- (1.2,1.2);
        \draw[black] (0,0) -- (1.2,1.2);
        \draw[black] (1.2,0) -- (0,1.2);
        \draw (0,-0.2) node[anchor=center] { $a$};
        \draw (1.2,-0.2) node[anchor=center] { $b$};
        \draw (0,1.4) node[anchor=center] { $c$};
        \draw (1.2,1.4) node[anchor=center] { $d$};
        \draw[black] (2.4,0) -- (2.4,1.2);
        \draw[black] (3.6,0) -- (3.6,1.2);
        \draw[black] (2.4,0) -- (3.6,1.2);
        \draw[black] (3.6,0) -- (2.4,1.2);
        \draw (2.4,-0.2) node[anchor=center] {\small $2$};
        \draw (3.6,-0.2) node[anchor=center] {\small $1$};
        \draw (2.4,1.4) node[anchor=center] {\small $4$};
        \draw (3.6,1.4) node[anchor=center] {\small $3$};
    \end{tikzpicture}
\end{center}

Given a partition $\lambda = (\lambda_1, \dots, \lambda_n)$, the {\it Young diagram} of $\lambda$ is a subset $D_{\lambda}$ of $\{1,2,\ldots\} \times \{1,2,\ldots\}$ defined by 
$$
D_{\lambda} = \{(i, j) : 1 \leq j \leq \lambda_i\}.
$$
The \textit{Ferrers poset} $F_\lambda$ of $\lambda$ is defined to be the set $D_\lambda$ with a partial order generated by the covering relations $(i,j) < (i, j+1)$ and $(i,j) < (i+1, j)$. It is well-known that there is a one-to-one correspondence between the standard Young tableaux of shape $\lambda$ and the linear extensions of the corresponding Ferrers poset $F_\lambda$.
The example below shows a standard Young tableau of shape $\lambda = (4,2,1)$ and its corresponding linear extension of the Ferrers poset $F_{\lambda}$.
\[ \;
\begin{ytableau} 
    1 & 2 & 4 & 6 \\ 
    3 & 5 \\
    7 
    \end{ytableau} \;
\quad \longleftrightarrow \quad
\;
\vcenter{\hbox{\begin{tikzpicture}
    \centering
        \filldraw[black] (0,-2) circle (1.5pt);
        \filldraw[black] (-0.5,-1.5) circle (1.5pt);
        \filldraw[black] (0.5,-1.5) circle (1.5pt);
        \filldraw[black] (-1,-1) circle (1.5pt);
        \filldraw[black] (0,-1) circle (1.5pt);
        \filldraw[black] (1,-1) circle (1.5pt);
        \filldraw[black] (-1.5,-0.5) circle (1.5pt);
        
        \draw[black] (0,-2) -- (-1.5,-0.5);
        \draw[black] (0,-2) -- (1,-1);
        \draw[black] (-0.5,-1.5) -- (0,-1);
        \draw[black] (0.5,-1.5) -- (0,-1);

        \draw (0,-1.7) node[anchor=center] {\small$1$};
        \draw (-0.5,-1.2) node[anchor=center] {\small$2$};
        \draw (0.5,-1.2) node[anchor=center] {\small$3$};
        \draw (-1,-0.7) node[anchor=center] {\small$4$};
        \draw (0,-0.7) node[anchor=center] {\small$5$};
        \draw (1,-0.7) node[anchor=center] {\small$7$};
        \draw (-1.5,-0.2) node[anchor=center] {\small$6$};

    \end{tikzpicture}}}
 \;
\]

\smallskip

Observe that if $T$ is a standard Young tableau, by Definition~\ref{defn:BK_CST}, the BK move $t_i$ acts on $T$ by simply switching $i$ and $i+1$ if they label non-adjacent boxes and fixing $T$ otherwise. Via the above bijection, there is an analog of the BK moves on linear extensions of Ferrers posets, which naturally extends to any arbitrary poset. This generalization was first introduced by Haiman \cite{haiman} as well as Malvenuto and Reutenauer \cite{malvenuto-reutenauer} to study promotion and evacuation on linear extensions, which are analogs of operators on column-strict tableaux previously studied by Sch\"utzenberger~\cite{SCHUTZENBERGER1972, schutzenberger1976, schutzenberger77}.

\begin{definition}
For a given $n$-element poset $P$ and $1 \le i \le n-1$, the {\it Bender--Knuth move} $t_i$ is an involution of $\linext(P)$ that sends $\ell = (p_1, \ldots, p_n)$ to
\[(p_1, \ldots, p_{i+1}, p_i, \ldots, p_n)\]
if $p_i$ and $p_{i+1}$ are incomparable, and fixes $\ell$ otherwise.
\end{definition}

As in the case with column-strict tableaux, two important operators generated by the BK moves are (partial) promotion $\partial_i = t_{i}t_{i-1}\ldots t_1$ (by convention, $\partial_0 = 1$) and (partial) evacuation $q_i = \partial_0\partial_1\dots\partial_i$.
We refer the reader to \cite{Stanley-promotion-evacuation} for a careful analysis of these operators; note that the inclusion of the identity operator $\partial_0$ in $q_i$ is motivated by a procedural description in the reference of the evacuation $q_i$ as a series of $i+1$ partial promotions.
For the purpose of this paper, we emphasize a useful procedural description of the promotion operator $\partial_i$:

\begin{definition}\label{defn:promo_Stanley}
Given an $n$-element poset $P$ and $1 \le i \le n-1$, the promotion operator $\partial_i$ is a permutation of $\linext(P)$ that sends $\ell \in \linext(P)$ to the linear extension of $P$ obtained from the following procedure:
\begin{enumerate}
    \item Consider the subposet formed by the elements $\ell^{-1}(1),\ell^{-1}(2),\ldots,\ell^{-1}(i+1)$;
    \item Let $p_1 = \ell^{-1}(1)$. Remove the label $1$ from $p_1$;
    \item Among the elements covering $p_1$, let $p_2$ be the element with the smallest label $\ell(p_2)$;
    \item Slide this label down to $p_1$;
    \item Repeat until we reach a maximal element $p_k$. Label $p_k$ with $i+2$ and subtract all labels on the entire subposet by $1$.
\end{enumerate}
\end{definition}

We call $p_1<p_2<\ldots<p_k$ a \textit{promotion chain}. An immediate consequence of this interpretation is that when a promotion operator $\partial_i$ acts on a linear extension of a disjoint union of posets $P_1,\ldots,P_n$, the only component $P_i$ whose labels' relative order may be affected is the one containing $\ell^{-1}(1)$.

Given a finite poset $P$, its {\it linear extension graph} is the graph whose vertices are labelled by linear extensions of $P$ and edges are given by the BK moves that swap corresponding linear extensions (e.g., see Figure~\ref{fig:BK_moves_posets}). Linear extension graphs were first introduced by Pruesse and Ruskey \cite{pr1991}, and previously used in the study of linear extension generation (e.g., \cite{RUSKEY1992,stachowiak1992,west1993,naatz2000,bm2013}) and Markov chains on $\linext(P)$ (e.g., \cite{ayyer-klee-schilling}).

\begin{figure}[htp]
    \centering
    \begin{tikzpicture}
        \draw[black] (-2.4,1.2) -- (-2.4,2.4);
        \draw[black] (-1.2,1.2) -- (-1.2,2.4);
        \draw[black] (-2.4,1.2) -- (-1.2,2.4);
        \draw[black] (-1.2,1.2) -- (-2.4,2.4);
        \draw (-2.4,1) node[anchor=center] {\small $a$};
        \draw (-1.2,1) node[anchor=center] {\small $b$};
        \draw (-2.4,2.6) node[anchor=center] {\small $c$};
        \draw (-1.2,2.6) node[anchor=center] {\small $d$};
        \draw[black] (0,0) -- (0,1.2);
        \draw[black] (1.2,0) -- (1.2,1.2);
        \draw[black] (0,0) -- (1.2,1.2);
        \draw[black] (1.2,0) -- (0,1.2);
        \draw (0,-0.2) node[anchor=center] {\small $2$};
        \draw (1.2,-0.2) node[anchor=center] {\small $1$};
        \draw (0,1.4) node[anchor=center] {\small $3$};
        \draw (1.2,1.4) node[anchor=center] {\small $4$};
        \draw[black] (0.6,1.4) -- (0.6,2.2);
        \draw (0.85,1.8) node[anchor=center] {\small$t_1$};
        \draw[black] (0,2.4) -- (0,3.6);
        \draw[black] (1.2,2.4) -- (1.2,3.6);
        \draw[black] (0,2.4) -- (1.2,3.6);
        \draw[black] (1.2,2.4) -- (0,3.6);
        \draw (0,2.2) node[anchor=center] {\small $1$};
        \draw (1.2,2.2) node[anchor=center] {\small $2$};
        \draw (0,3.8) node[anchor=center] {\small $3$};
        \draw (1.2,3.8) node[anchor=center] {\small $4$};
        \draw[black] (2.4,0) -- (2.4,1.2);
        \draw[black] (3.6,0) -- (3.6,1.2);
        \draw[black] (2.4,0) -- (3.6,1.2);
        \draw[black] (3.6,0) -- (2.4,1.2);
        \draw (2.4,-0.2) node[anchor=center] {\small $2$};
        \draw (3.6,-0.2) node[anchor=center] {\small $1$};
        \draw (2.4,1.4) node[anchor=center] {\small $4$};
        \draw (3.6,1.4) node[anchor=center] {\small $3$};
        \draw[black] (3,1.4) -- (3,2.2);
        \draw (3.25,1.8) node[anchor=center] {\small$t_1$};
        \draw[black] (2.4,2.4) -- (2.4,3.6);
        \draw[black] (3.6,2.4) -- (3.6,3.6);
        \draw[black] (2.4,2.4) -- (3.6,3.6);
        \draw[black] (3.6,2.4) -- (2.4,3.6);
        \draw (2.4,2.2) node[anchor=center] {\small $1$};
        \draw (3.6,2.2) node[anchor=center] {\small $2$};
        \draw (2.4,3.8) node[anchor=center] {\small $4$};
        \draw (3.6,3.8) node[anchor=center] {\small $3$};
        \draw[black] (1.4,0.6) -- (2.2,0.6);
        \draw (1.8,0.85) node[anchor=center] {\small $t_3$};
        \draw[black] (1.4,3) -- (2.2,3);
        \draw (1.8,3.25) node[anchor=center] {\small$t_3$};
    \end{tikzpicture}
    \caption{Linear extension graph of the poset defined by the relations $a,b < c,d$.}
    \label{fig:BK_moves_posets}
\end{figure}
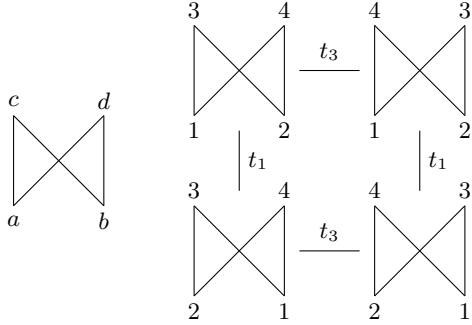

We may define an analog of the BK group in this context.

\begin{definition}\label{defn:BKP}
For a given $n$-element poset $P$, the {\it Berenstein--Kirillov group of }$P$, denoted by $\bkp$, is the permutation group of the set $\linext(P)$ generated by the BK moves $t_i$ ($1 \le i \le n-1$).
\end{definition}

Very few studies have investigated this group explicitly, e.g., \cite{vershik-tsilevich,vershik21}. Some relations are known to hold in $\bkp$ for all posets $P$, including
\begin{enumerate}
    \item $t_i^2 = 1$ for all $i \ge 1$;
    \item $(t_i t_j)^2 = 1$ for all $i,j \ge 1$ with $|i - j| \ge 2$; and
    \item $(t_i t_{i+1})^6 = 1$ for all $i \ge 1$ \cite{Stanley-promotion-evacuation}.
\end{enumerate}
When $P$ is a Ferrers poset, the inclusion of Relations (3) represents new relations satisfied by the generators of the classical BK group when specialized to acting on standard Young tableaux. Furthermore, the result of \cite{CGP} implies that the cactus relations hold in $\bkp$ in this case, alongside the above. Outside of these few exceptions, very little is known about the group relations in $\bkp$.

Note that there is no direct correspondence between the relations in the groups $\mathcal{BK}$ and $\bkp$ for arbitrary posets $P$. This is because our definition of $\bkp$ involves first specializing the action of the BK moves to standard Young tableaux (which may create more relations among the generators $t_i$, e.g., Relations (3) above) then generalizing to an action on linear extensions of arbitrary posets (which may eliminate some relations). This observation leads to the first motivating question of our paper:

\begin{question}\label{ques:ker}
What further relations hold in $\bkp$, for which posets $P$? In particular, for which posets $P$ do the cactus relations hold in $\bkp$?
\end{question}

This question will be addressed in Section~\ref{sec:rel}. In general, one of the main takeaways from our investigation is that not all relations in $\mathcal{BK}$ continue to hold in $\bkp$ for arbitrary posets $P$.

Similarly to its relations, very little has been established about the properties of $\bkp$ as a subgroup of the symmetric group on $\linext(P)$. The following fact about $\bkp$ as a permutation group of $\linext(P)$ has been widely utilized in different forms by, e.g., \cite{pak01,lam-williams,develin-macauley-reiner,defant-kravitz}. A proof was explicitly given by Ayyer, Klee, and Schilling (see Proposition 4.1 of \cite{ayyer-klee-schilling}), by showing that linear extension graphs are strongly connected.

\begin{prop}\label{prop:BKP_transitive}
    $\bkp$ is a transitive subgroup of $\symm_{\linext(P)}$.
\end{prop}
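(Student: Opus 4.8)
The plan is to show that for any two linear extensions $\ell, \ell' \in \linext(P)$, some word in the BK moves $t_i$ carries $\ell$ to $\ell'$; equivalently, that the linear extension graph of $P$ is connected. The cleanest route is to fix a target linear extension, say $\ell'$, and argue that every $\ell$ can be transformed into $\ell'$ by a sequence of adjacent transpositions, each of which is a legal BK move. The key observation is that BK moves are exactly those adjacent transpositions $s_i = (i\ i{+}1)$ in $\symm_n$ whose application to a linear extension (viewed as the word $\ell^{-1}(1), \dots, \ell^{-1}(n)$) again yields a linear extension — and this happens precisely when $\ell^{-1}(i)$ and $\ell^{-1}(i{+}1)$ are incomparable in $P$.

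First I would set up the sorting argument: think of $\ell$ and $\ell'$ as two total orders refining $P$, and try to "bubble-sort" $\ell$ toward $\ell'$. The standard trick is to locate the element $p = \ell'^{-1}(1)$ (the bottom of the target order) inside $\ell$; since $p$ is minimal in $\ell'$ and $\ell'$ extends $P$, $p$ is a minimal element of $P$. If $p$ is not already in position $1$ of $\ell$, let $q$ be the element immediately below $p$ in $\ell$. Then $q \not<_P p$ (as $p$ is minimal in $P$) and also $q \not>_P p$ (else $q$ would be below $p$ in $\ell$ only if... wait — carefully: if $q >_P p$ then $\ell$ being a linear extension forces $\ell(p) < \ell(q)$, contradicting that $q$ sits just below $p$). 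Hence $p$ and $q$ are incomparable, so the relevant $t_i$ is a legal BK move, and applying it moves $p$ one step down in $\ell$ without disturbing the linear-extension property. Iterating, we bring $p$ to position $1$. Then I would delete $p$ from both $P$ and the two linear extensions and induct on $|P|$: by induction there is a word in the $t_i$ (for the smaller poset, hence a subset of the $t_i$ for $P$, reindexed by the shift that ignores position $1$) sorting the truncated $\ell$ to the truncated $\ell'$, and none of these moves touches position $1$, so they lift to legal BK moves on $\linext(P)$ fixing $p$ in place. Composing yields $\ell \mapsto \ell'$.

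The main obstacle — really the only subtle point — is verifying the incomparability claim that licenses each bubble step, i.e., that the element just below $p$ in the current linear extension is genuinely incomparable to $p$ in $P$, so that swapping them is a BK move and not a no-op. This rests on $p$ being minimal in $P$ together with the defining property of linear extensions, and it must be rechecked after each swap (but $p$ stays minimal, and the element newly sitting below it is still some $q$ with $q \not>_P p$ by the same argument, while $q \not<_P p$ by minimality). A secondary bookkeeping point is the reindexing of generators under the truncation $P \setminus \{p\}$: a BK move $t_j$ on the $(n-1)$-element poset corresponds to $t_{j+1}$ on $P$, so the induction produces a word in $t_2, \dots, t_{n-1}$, all of which are among the generators of $\bkp$. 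Since we have shown $\bkp$ acts transitively on the nonempty set $\linext(P)$, and it is by definition a subgroup of $\symm_{\linext(P)}$, the proposition follows. Alternatively, and equivalently, one may simply cite Proposition~4.1 of \cite{ayyer-klee-schilling}, whose proof establishes the stronger fact that the linear extension graph is strongly connected.
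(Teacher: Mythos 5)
Your proof is correct, and it is worth noting that the paper itself does not supply an argument for Proposition~\ref{prop:BKP_transitive}: it simply cites Proposition~4.1 of \cite{ayyer-klee-schilling}, whose proof gives the stronger fact that the linear extension graph is strongly connected. Your bubble-sort induction is a valid self-contained substitute. The key incomparability step is sound: $p=\ell'^{-1}(1)$ is minimal in $P$ because $\ell'$ extends $P$; if $q$ sits immediately below $p$ in the current extension, then $q<_P p$ is excluded by minimality and $q>_P p$ is excluded because it would force $\ell(p)<\ell(q)$, so each swap is a genuine BK move and the result is again a linear extension. The reindexing step is also fine: a linear extension of $P$ with $p$ labelled $1$ restricts to one of $P\setminus\{p\}$ after shifting labels down by $1$, and under this bijection the generator $t_j$ of $\mathcal{BK}_{P\setminus\{p\}}$ corresponds to $t_{j+1}$ of $\bkp$, which never disturbs the label $1$; composing the inductive word with the sorting word carries $\ell$ to $\ell'$, giving transitivity. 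In comparison, the citation route buys the stronger strong-connectivity statement (relevant for the Markov-chain applications in \cite{ayyer-klee-schilling}), while your argument buys an elementary, self-contained proof of ordinary connectivity of the linear extension graph, which is exactly what transitivity of $\bkp$ on $\linext(P)$ requires.
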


Another rare instance is Vershik and Tsilevich's study of the BK groups of Ferrers posets in the form of permutation groups on Young graphs \cite{vershik-tsilevich}. Motivated by this lack of investigation, the second part of our paper will be guided by the following question about $\bkp$:

\begin{question}\label{ques:im}
For which posets $P$ does $\bkp = \symm_{\linext(P)}$? More generally, what can we say about the relative size of $\bkp$ as a transitive subgroup of $\symm_{\linext(P)}$?
\end{question}

This question will be studied in Sections~\ref{sec:symm} and ~\ref{sec:stab}.


\subsection{Poset operations}\label{poset-operations}\label{subsec:poset_op}
We fix the notations for various poset operations discussed in this paper.
Let $P$ and $Q$ be finite posets.
Let $P \oplus Q$ denote the ordinal sum of $P$ and $Q$, where each element of $P$ is less than every element of $Q$. Note that the ordinal sum operation is not commutative; in general, $P \oplus Q \neq Q \oplus P$.
Let $P + Q$ denote the disjoint union of $P$ and $Q$. On the contrary, this operation is commutative.
Finally, let $P^*$ be the dual of the poset $P$, defined by inverting the order of $P$ in the sense that $x \le_{P^*} y$ if and only if $y \le_P x$.


\section{Relations in \texorpdfstring{$\bkp$}{BK\_P}}\label{sec:rel}

In this section, we discuss the group relations in $\bkp$, including the trivialization, braid, and cactus relations. The most important findings are located in Section~\ref{subsec:cactus}, where we study posets for which the cactus relations hold in their BK groups and study their behaviors under disjoint union and ordinal sum operations, and Section~\ref{subsec:cactus_jdt}, where we study several potential families of such posets.


\subsection{Relations in \texorpdfstring{$\bkp$}{BK\_P} and convex induced subposets}\label{subsec:rel_BKP_cvx}

Let $P$ be an $n$-element poset. Recall that a {\it relation} in $\bkp$ is an equation $w = 1$, where $w$ is a word in the alphabet $\{t_1, \dots, t_{n-1}\}$. The first natural family of relations in $\bkp$ that we will discuss is the relations $t_i = 1$, called the {\it trivialization of} $t_i$.

\begin{prop} \label{prop:rel_ti=1}
    For $1 \le i \le n-1$, the relation $t_i = 1$ holds in $\bkp$ if and only if $P$ can be written as an ordinal sum $P = P_1 \oplus P_2$ where $|P_1|=i$.
\end{prop}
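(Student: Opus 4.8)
The plan is to prove both directions directly from the procedural/combinatorial meaning of $t_i$ as a transposition of the labels $i$ and $i+1$ on a linear extension.

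For the ``if'' direction, suppose $P = P_1 \oplus P_2$ with $|P_1| = i$. Then in \emph{every} linear extension $\ell = (p_1, \dots, p_n)$ of $P$, the first $i$ entries $p_1, \dots, p_i$ must be exactly the elements of $P_1$ (in some order) and the remaining entries must be the elements of $P_2$, since every element of $P_1$ is below every element of $P_2$. In particular $p_i \in P_1$ and $p_{i+1} \in P_2$, so $p_i <_P p_{i+1}$; hence $p_i$ and $p_{i+1}$ are comparable and $t_i$ fixes $\ell$. As this holds for all $\ell \in \linext(P)$, we get $t_i = 1$ in $\bkp$.

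For the ``only if'' direction, suppose $t_i = 1$ in $\bkp$, i.e. $t_i$ fixes every linear extension of $P$. I would set $A := \{\, p \in P : \ell(p) \le i \text{ for every } \ell \in \linext(P)\,\}$ and $B := P \setminus A$, and show that $P = A \oplus B$ with $|A| = i$. The heart of the argument is a swapping/exchange lemma: if some element $q$ has $\ell(q) \le i$ for one linear extension but $\ell'(q) > i$ for another, then (by transitivity of $\bkp$, Proposition~\ref{prop:BKP_transitive}, together with the fact that $\bkp$ is generated by the $t_j$, or more elementarily by a direct argument sliding $q$ past the position-$i$/position-$(i+1)$ boundary through a sequence of adjacent transpositions, each of which must be a valid BK move somewhere along the way) one can exhibit a linear extension in which positions $i$ and $i+1$ are occupied by a pair of incomparable elements, contradicting $t_i = 1$. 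Concretely: take a linear extension $\ell$ realizing the minimum possible value of $\ell(q)$ subject to $\ell(q) > i$, or symmetrically one realizing the maximum value $\le i$; if $P$ does not split as claimed then one can always find such an $\ell$ in which $\ell^{-1}(i)$ and $\ell^{-1}(i+1)$ are incomparable, because otherwise $\{\ell^{-1}(1), \dots, \ell^{-1}(i)\}$ would be a down-set forced to lie entirely below $\{\ell^{-1}(i+1), \dots, \ell^{-1}(n)\}$, giving the ordinal sum decomposition with $|P_1| = i$. Once we know the set $A$ of elements ``always in the first $i$ positions'' has size exactly $i$, it follows that $A$ is a down-set, $B$ is an up-set, and every element of $A$ is below every element of $B$ (if $a \in A$, $b \in B$ were incomparable, we could build a linear extension placing $b$ before $a$, hence $b$ in position $\le i$, contradicting $b \notin A$; a short argument using linear extensions of the induced subposets makes this precise), so $P = A \oplus B$.

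I expect the main obstacle to be the exchange lemma in the ``only if'' direction: making rigorous the claim that if the position-$i$ boundary is ``not clean'' (i.e. $P$ is not an ordinal sum at that spot) then there exists a linear extension with incomparable elements in positions $i$ and $i+1$. The cleanest route is probably: let $D$ be the set of elements $p$ for which \emph{some} linear extension assigns $\ell(p) \le i$; if $|D| > i$ then $D$ is not an antichain-free down-set forced into the bottom, and by choosing a linear extension of $P$ that places a carefully chosen element of $D$ as late as possible among the first $i$ slots, position $i$ and position $i+1$ end up incomparable. Alternatively one can phrase everything in terms of the linear extension graph and connectivity, but the element-chasing version above is likely shortest. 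Everything else is bookkeeping with down-sets and up-sets.
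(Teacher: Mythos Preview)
Your ``if'' direction is fine and matches the paper. For the ``only if'' direction your transitivity-based route does work: since only $t_i$ can change the set $\ell^{-1}([i])$, and $t_i=1$, this set is the same for every linear extension by Proposition~\ref{prop:BKP_transitive}; call it $A$, note $|A|=i$ and $A$ is an order ideal, and then your final step (if $a\in A$, $b\in B$ were incomparable, some linear extension puts $b$ before $a$, forcing $\ell(b)\le i$, contradiction) finishes it. So the argument is correct, though your ``Concretely: take a linear extension realizing the minimum possible value\dots'' paragraph is muddled---the ``otherwise'' clause there does not follow, since comparability of $\ell^{-1}(i)$ and $\ell^{-1}(i+1)$ in a single $\ell$ says nothing about the full ordinal-sum decomposition. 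Drop that paragraph and keep the clean transitivity version.

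The paper takes a shorter, more self-contained route that avoids transitivity entirely. It proves the contrapositive: assume $P$ is \emph{not} an ordinal sum $P_1\oplus P_2$ with $|P_1|=i$, pick \emph{any} order ideal $I$ of size $i$, and observe that some maximal element $u$ of $I$ must be incomparable to some minimal element $v$ of $P\setminus I$ (otherwise every such pair satisfies $u<_P v$ and $P=I\oplus(P\setminus I)$). Then one builds a single linear extension with $\ell(u)=i$ and $\ell(v)=i+1$ by filling in $I\setminus\{u\}$ and $(P\setminus I)\setminus\{v\}$ arbitrarily, and $t_i$ visibly moves it. Compared to your approach, this is one explicit construction rather than an invariance-plus-contradiction argument, and it does not rely on connectivity of the linear extension graph; your approach, on the other hand, has the conceptual payoff of explaining \emph{why} the boundary is rigid (nothing can cross it) rather than just exhibiting one witness.
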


\begin{proof}
When $P=P_1 \oplus P_2$ with $|P_1|=i$, every linear extension $\ell$ of $P$ has $\ell(P_1)=\{1,2,\ldots,i\}$ and $\ell(P_2)=\{i+1,i+2,\ldots,n\}$, so 
$\ell^{-1}(i) <_P \ell^{-1}(i+1)$, and $t_i$ fixes $\ell$.

Conversely, if $P \neq P_1 \oplus P_2$ whenever $|P_1|=i$, we claim that for any order ideal (lower set) $I$ of $P$ with cardinality $|I| = i$, there exist a maximal element $u$ of $I$ and a minimal element $v$ of the upper set $P \setminus I$ such that $u$ and $v$ are incomparable.
Suppose not, then for every maximal element $u$ of $I$ and every minimal element $v$ of $P \setminus I$, we have $u <_P v$, as $u >_P v \notin I$ is not allowed given that $I$ is an order ideal. Therefore, $P = I \oplus (P \setminus I)$, a contradiction.

Let $I$ be an order ideal of $P$ with cardinality $|I|=i$; for example, choose the inverse image $I=f^{-1}(\{1,2,\ldots,i\})$ for any $f \in \linext(P)$. We now construct a linear extension $\ell$ of $P$ such that $t_i(\ell) \neq \ell$. By the previous claim, we may find an incomparable pair of elements $\{u,v\}$ of $P$ where $u$ is maximal in $I$ and $v$ is minimal in $P \setminus I$. Construct a linear extension $\ell$ by first labeling $\ell(u)=i$ and $\ell(v)=i+1$, then using arbitrary linear extensions to label $I \setminus \{u\}$ with $\{1,2,\ldots, i-1\}$ 
and $(P \setminus I) \setminus \{v\}$ with $\{i+2,i+3,\ldots, n\}$. It is easy to check that $t_i$ swaps the labels of $u$ and $v$ in $\ell$, and hence $t_i (\ell) \ne \ell$.
\end{proof}

\begin{definition}
    A poset is \emph{(ordinally) indecomposable} if it is not an ordinal sum of two or more non-empty posets.
\end{definition}

\begin{cor}\label{ti-indep}
    Let $|P| = n$. If $P$ is indecomposable, then $\{t_1, \dots, t_{n-1}\}$ is an inclusion-minimal generating set in $\bkp$.
\end{cor}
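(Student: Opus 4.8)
The plan is to show that no proper subset of $\{t_1,\dots,t_{n-1}\}$ generates $\bkp$ when $P$ is indecomposable, which (since the full set does generate) is exactly the claim that the set is inclusion-minimal. Concretely, it suffices to prove: for each $j \in \{1,\dots,n-1\}$, the generator $t_j$ is \emph{not} contained in the subgroup $G_j := \langle t_i : i \neq j \rangle$. Because $P$ is indecomposable, Proposition~\ref{prop:rel_ti=1} tells us $t_j \neq 1$, so there is a linear extension $\ell$ with $t_j(\ell) \neq \ell$; the task is to locate an invariant that separates $t_j$ from every element of $G_j$.

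First I would isolate the right invariant. Fix $j$ and consider the two order ideals $I_{j-1} = \ell^{-1}(\{1,\dots,j-1\})$ and $I_j = \ell^{-1}(\{1,\dots,j\})$ associated to a linear extension $\ell$; equivalently, record which element of $P$ receives label $j$. The key observation is that a generator $t_i$ with $i \neq j$ only ever swaps the labels in positions $i$ and $i+1$, neither of which equals $j$ (note $i+1 = j$ forces $i = j-1 \neq j$ is excluded, and $i = j$ is excluded), so $t_i$ never moves the element carrying label $j$ — wait, $i = j-1$ swaps positions $j-1$ and $j$, so this needs care. The correct invariant is the \emph{unordered} pair of order ideals $\{I_{j-1}, I_j\}$ is not preserved either; rather, I claim the right bookkeeping is the element holding label $j$ \emph{together with} the element holding label $j+1$ when comparing across the full $G_j$-orbit — this is getting delicate, so let me state the clean version: for $i \notin \{j-1, j, j+1\}$, $t_i$ fixes the labels in positions $j-1, j, j+1$; for $i = j-1$ it may swap positions $j-1,j$; for $i = j+1$ it may swap positions $j+1, j+2$. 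In all cases the \emph{set} $\ell^{-1}(\{1,\dots,j\}) = I_j$ is preserved by every $t_i$ with $i \neq j$, because such a $t_i$ either swaps two positions both $\le j$, both $> j$, or is irrelevant — but $i = j-1$ swaps positions $j-1$ and $j$, both $\le j$, fine; $i = j$ is the excluded one. Hence \textbf{$I_j := \ell^{-1}(\{1,\dots,j\})$ is invariant under the whole group $G_j$}, while $t_j$, when it acts nontrivially on $\ell$, moves an element out of $I_j$ and another in, changing $I_j$.

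With that invariant in hand the proof is short. Using the claim proved inside Proposition~\ref{prop:rel_ti=1} (valid since $P$ is indecomposable, so $P \neq I \oplus (P \setminus I)$ for any ideal $I$ of size $j$), pick any order ideal $I$ with $|I| = j$ and produce, exactly as in that proof, a linear extension $\ell$ with a maximal $u \in I$ and minimal $v \in P \setminus I$ incomparable, $\ell(u) = j$, $\ell(v) = j+1$, so that $t_j(\ell)$ has order ideal $(I \setminus \{u\}) \cup \{v\} \neq I$ in positions $\{1,\dots,j\}$. Since every element $g \in G_j$ satisfies $g(\ell)^{-1}(\{1,\dots,j\}) = I$ but $t_j(\ell)^{-1}(\{1,\dots,j\}) \neq I$, we conclude $t_j \neq g$ for all $g \in G_j$, i.e. $t_j \notin G_j$. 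As $j$ was arbitrary, removing any generator shrinks the group, so $\{t_1,\dots,t_{n-1}\}$ is inclusion-minimal.

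The main obstacle — really the only subtle point — is verifying cleanly that $I_j = \ell^{-1}(\{1,\dots,j\})$ is genuinely invariant under each $t_i$ with $i \neq j$: one must check the three position cases ($\{i,i+1\}$ both $\le j$, both $\ge j+1$, and the boundary $i = j-1$) and confirm that a BK swap in any of them leaves the set of elements with labels in $\{1,\dots,j\}$ unchanged, which holds precisely because $t_i$ only permutes labels within $\{1,\dots,j\}$ or within $\{j+1,\dots,n\}$ when $i \neq j$. After that, everything reduces to the incomparable-pair construction already carried out in the proof of Proposition~\ref{prop:rel_ti=1}.
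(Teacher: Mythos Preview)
Your argument is correct and is essentially the same as the paper's: both identify the order ideal $I_j = \ell^{-1}(\{1,\dots,j\})$ as an invariant of the subgroup $G_j = \langle t_i : i \neq j\rangle$, then invoke Proposition~\ref{prop:rel_ti=1} to obtain an $\ell$ on which $t_j$ changes $I_j$, yielding $t_j \notin G_j$. The paper's write-up is just terser; your case analysis for the invariance of $I_j$ under $t_i$ with $i \neq j$ (both positions $\le j$ or both $\ge j+1$) is exactly the content of the paper's one-line claim that ``any action that is not $t_i$ preserves this image $\ell(I)$.''
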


\begin{proof}
    By Proposition \ref{prop:rel_ti=1}, there is some linear extension $\ell$ on which $t_i$ is not in the stabilizer. Let $|P| = n$. Suppose for contradiction that $w\ell = t_i \ell$ for some $w \in \ang{t_1, t_2, \dots, \hat{t_i}, \dots, t_{n-1}}$. Then consider the order ideal $I$ such that $\ell(I) = [i]$, i.e., the elements of $P$ with labels $\{1, 2, \dots, i\}$. Then any action that is not $t_i$ preserves this image $\ell(I)$, so $i+1 \notin w\ell(I)$. However, we have $i+1 \in t_i\ell (I)$, a contradiction.
\end{proof}

Next, we present a useful tool for studying the relations in $\bkp$ by examining the convex induced subposets of $P$. Recall that an {\it induced subposet} $Q \subseteq P$ is a subset of vertices in $P$ such that for any $x,y \in Q$, $x \le_Q y$ if and only if $x \le_P y$. A {\it convex induced subposet} is an induced subposet such that if $x,z \in Q$ and $y \in P$ satisfy $x \le_P y \le_P z$, then $y \in Q$.

\begin{definition}
    A \emph{relation type} in $\bkp$ is a set of relations $w_i = 1$, where $w_i$ is obtained from a fixed word $w$ in $\{t_1, \dots, t_{n-1}\}$ by translating the indices of all generators $t_j$ in $w$ by an integer $i$, such that $1 \le i+j \le n-1$ for all $j$.
\end{definition}

For example, $(t_1 t_2)^6=1$ is a relation, whereas the relations $(t_i t_{i+1})^6=1$ form a relation type. The following proposition gives a means to generate relation types in the BK groups for convex induced subposets.

\begin{prop} \label{prop:rel_cvx}
    If a relation type holds in $\bkp$, then it also holds in $\bkq$ for every convex induced subposet $Q$ of $P$.
\end{prop}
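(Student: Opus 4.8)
The plan is to show that linear extensions of a convex induced subposet $Q$ of $P$ can be "embedded" into linear extensions of $P$ in a way that is compatible with the relevant BK moves. Concretely, since $Q$ is convex, we may partition $P \setminus Q$ into the set $L$ of elements that are below some element of $Q$ (together with elements incomparable to all of $Q$ but which we choose to place low — actually, more carefully: let $L$ be a fixed order ideal of $P$ that is disjoint from $Q$ and maximal with the property that no element of $Q$ lies below any element of $L$, and let $U = P \setminus (Q \cup L)$). Convexity of $Q$ guarantees that we can choose such a decomposition $P = L \sqcup Q \sqcup U$ where $L$ is a down-set, $U$ is an up-set, and every element of $L$ is $\le_P$-below or incomparable to each element of $Q$, while every element of $U$ is $\ge_P$-above or incomparable to each element of $Q$; the key point is that we can fix linear extensions $\ell_L$ of $L$ and $\ell_U$ of $U$ so that, for \emph{every} linear extension $m$ of $Q$, concatenating the labels gives a valid linear extension $\ell_L \cdot m \cdot \ell_U$ of $P$. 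Write $k = |L|$, $r = |Q|$, so these linear extensions of $P$ occupy label positions $k+1, \dots, k+r$ with the copy of $Q$.

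Next I would check the key compatibility: for $1 \le j \le r-1$, the BK move $t_{k+j}$ on $\linext(P)$ restricted to this family of linear extensions acts exactly as $t_j$ on $\linext(Q)$ does, and leaves the $L$- and $U$-parts untouched. This is immediate from the definition of BK moves: $t_{k+j}$ only looks at the elements in positions $k+j$ and $k+j+1$, both of which lie in the copy of $Q$, and swaps them iff they are incomparable in $P$, which by the induced-subposet property is iff they are incomparable in $Q$. Hence the map $\iota : \linext(Q) \to \linext(P)$, $m \mapsto \ell_L \cdot m \cdot \ell_U$, is injective and intertwines $t_j$ (acting on $\linext(Q)$) with $t_{k+j}$ (acting on $\iota(\linext(Q))$, which is an invariant subset).

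Now suppose a relation type holds in $\bkp$, i.e. there is a word $w$ in $\{t_1, \dots, t_{n-1}\}$ such that every index-shift $w_i$ of it acts as the identity on $\linext(P)$. Given the word $w$ for the relation type and any target shift producing a word $w'$ in the generators $\{t_1, \dots, t_{r-1}\}$ of $\bkq$, consider the word obtained from $w'$ by translating all indices up by $k$; call it $\widetilde{w}$. By hypothesis $\widetilde{w}$ is (a member of the relation type, hence) the identity on all of $\linext(P)$, so in particular it fixes every element of $\iota(\linext(Q))$. Conjugating through $\iota$ and using the intertwining property from the previous step, $w'$ fixes every element of $\linext(Q)$, i.e. $w' = 1$ in $\bkq$. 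Since $w'$ ranged over all shifts of $w$ valid for $Q$, the relation type holds in $\bkq$.

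The main obstacle is the first step: verifying that a convex induced subposet genuinely admits a decomposition $P = L \sqcup Q \sqcup U$ with $L$ a down-set, $U$ an up-set, and fixed linear extensions of $L$ and $U$ that "sandwich" \emph{every} linear extension of $Q$. One clean way is to take $L$ to be the set of $p \in P \setminus Q$ with $p \le_P q$ for some $q \in Q$, together with a linear extension of $P$ chosen so that all of $Q$ appears before all elements incomparable to the bottom of $Q$ — but the cleanest formulation is: pick any $f \in \linext(P)$; let $k = \min\{f(q) : q \in Q\} - (\text{something})$... rather, the robust argument is that convexity implies $Q$ occupies a contiguous block of positions in \emph{some} linear extension of $P$, after which we are free to vary the block over all of $\linext(Q)$ while holding the complementary prefix and suffix fixed, precisely because convexity forbids any element outside $Q$ from being forced strictly between two elements of $Q$. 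Making this contiguity statement precise and checking it is the only nontrivial part; everything after is bookkeeping about how BK moves and index shifts interact.
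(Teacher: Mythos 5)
Your proposal follows essentially the same route as the paper's proof of Proposition~\ref{prop:rel_cvx}: embed $\linext(Q)$ into $\linext(P)$ by sandwiching each linear extension of $Q$ between fixed linear extensions of a down-set and an up-set, observe that $t_{k+j}$ restricted to this image acts exactly as $t_j$ does on $\linext(Q)$, and then transfer the suitably shifted relation (this is where the hypothesis that the relations form a relation \emph{type} is used). The paper phrases this contrapositively, but the substance is identical.

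The one piece you leave unfinished --- the ``contiguity'' statement you yourself flag as the only nontrivial step, and whose formalization trails off in your write-up --- is precisely the paper's Lemma~\ref{lem:cvx_ind}, and it closes quickly with your first candidate for the lower part: take $I=\{p\in P\setminus Q:\ p<_P q\ \text{for some}\ q\in Q\}$ and $F=P\setminus(Q\sqcup I)$. Then $Q\sqcup I$ is an order ideal of $P$ by construction, and $I$ is an order ideal of $Q\sqcup I$: if $x\le_P p$ with $x\in Q$ and $p\in I$, then $x\le_P p<_P q$ for some $q\in Q$, and convexity of $Q$ would force $p\in Q$, a contradiction. Consequently any linear extension of $I$, followed by an arbitrary $m\in\linext(Q)$ (labels shifted by $|I|$), followed by any linear extension of $F$, is a linear extension of $P$, uniformly in $m$; this is exactly the sandwich you need. (Your alternative choice of a maximal order ideal disjoint from $Q$ also works, but verifying that it swallows everything below $Q$ requires the same convexity argument, so nothing is gained.) With that lemma supplied, the remaining bookkeeping in your argument is correct.
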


The proof of this statement makes use of the following lemma.

\begin{lemma} \label{lem:cvx_ind}
    Let $Q$ be a convex induced subposet of $P$ and $\ell_1 \in \linext(Q)$. Then there is a linear extension $\ell_2 \in \linext(P)$ such that for some $i \in \mathbb{Z}_{\ge 0}$, $\ell_2(v) = \ell_1(v) + i$ for all $v \in Q$.
\end{lemma}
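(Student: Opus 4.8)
The plan is to realize $Q$ as an \emph{interval} in some linear extension of $P$: to produce an $\ell_2 \in \linext(P)$ under which the elements of $Q$ receive the consecutive labels $|I|+1, |I|+2, \ldots, |I|+|Q|$ for a suitable order ideal $I$ of $P$ disjoint from $Q$, and in the relative order prescribed by $\ell_1$. Setting $i := |I|$ then gives $\ell_2(v) = \ell_1(v) + i$ on $Q$, as required.

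Concretely, I would put
\[
    I := \{\, p \in P \setminus Q : p <_P q \text{ for some } q \in Q \,\}, \qquad D := I \cup Q .
\]
The two facts to verify, both consequences of convexity of $Q$, are: (i) $I$ is an order ideal of $P$; and (ii) $D$ is an order ideal of $P$. For (i): if $p \in I$ with $p <_P q$, $q \in Q$, and $p' <_P p$, then $p' <_P q$; moreover $p' \notin Q$, since otherwise $p' <_P p <_P q$ with $p', q \in Q$ would force $p \in Q$ by convexity, contradicting $p \in I$; hence $p' \in I$. For (ii): any element $<_P$ some $q \in Q$ lies in $Q$ or in $I$ by definition, and any element $<_P$ some $p \in I$ lies in $I$ by (i). In particular $Q = D \setminus I$ is an up-set of the induced subposet $D$, and $P \setminus D$ is an up-set of $P$.

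Now build $\ell_2$ by concatenating three lists: first any linear extension of the induced subposet $I$, then $\ell_1$ viewed as a linear extension of $Q$, then any linear extension of the induced subposet $P \setminus D$. This is a legitimate linear extension of $P$: within each of the three blocks the prescribed order is respected (each is an induced subposet), and a relation $x <_P y$ running from a later block to an earlier one would have to be either (a) $q <_P \ell$ with $q \in Q$ and $\ell \in I$ --- impossible, since choosing $q' \in Q$ with $\ell <_P q'$ gives $q <_P \ell <_P q'$, forcing $\ell \in Q$ by convexity --- or (b) $x <_P y$ with $x \in P \setminus D$ and $y \in D$ --- impossible since $D$ is an order ideal. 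Under $\ell_2$ the block $Q$ occupies the labels $|I|+1, \ldots, |I|+|Q|$ ordered according to $\ell_1$, so $\ell_2(v) = \ell_1(v) + |I|$ for every $v \in Q$, and $i := |I| \in \mathbb{Z}_{\ge 0}$ works.

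The only real content is the repeated appeal to convexity in establishing (i), (ii), and the non-existence of a relation of type (a); everything else is the standard observation that a linear extension of an order ideal prolongs to one of the whole poset by appending a linear extension of the complementary up-set. I do not expect a genuine obstacle here --- the statement amounts to the assertion that a convex subposet can always be placed as a consecutive block, and the partition $P = I \sqcup Q \sqcup (P \setminus D)$ does exactly that.
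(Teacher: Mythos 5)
Your proposal is correct and follows essentially the same route as the paper: both decompose $P$ into the down-set $I$ of elements below $Q$, the convex block $Q$ itself, and the remaining up-set, then concatenate a linear extension of $I$, the extension $\ell_1$, and a linear extension of the complement, taking $i = |I|$. Your write-up simply spells out in more detail the order-ideal verifications and the convexity argument that the paper states briefly.
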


\begin{proof}
Disjointly decompose the poset $P$ into three subposets $I$, $Q$, and $F$, where 
    $$I:=\{p \in P \setminus Q: \text{there exists } q \in Q \text{ with }q > p\}, \text{ and }    F:= P \setminus (Q \sqcup I).$$
Then the subposet $Q \sqcup I$ of $P$ containing the
elements of $Q$ and $I$ will be an order ideal of $P$, and because $Q$ is convex, $I$ is an order ideal of $Q \sqcup I$.  Letting $i:=|I|, q:=|Q|,$ and $f:=|F|$,  one can therefore form a linear extension $\ell_2$ of $P$ having $\ell_2^{-1}(\{1,2,\ldots,i\})$
agree with any linear extension of $I$, having 
$\ell_2^{-1}(\{i+1,i+2,\ldots,i+q\})$
agree with $\ell_1$ after adding $i$ to the values in $\ell_1$, and having
$\ell_2^{-1}(\{i+q+1,i+q+2,\ldots,i+q+f\})$
agree with any linear extension of $F$
after adding $i+q$ to its values.
\end{proof}

\begin{proof}[Proof of Proposition~\ref{prop:rel_cvx}]
    We prove the contrapositive. Suppose there is a convex induced subposet $Q \subseteq P$ for which the relation type does not hold in $\bkq$. That is, there exists a relation $w(t_i, \ldots, t_{i+k}) = 1$ of this type which fails on a linear extension $\ell_1$ of $Q$. By Lemma~\ref{lem:cvx_ind}, there is a linear extension $\ell_2 \in \linext(P)$ such that for some $j \in \mathbb{Z}_{\ge 0}$, $\ell_2(v) = \ell_1(v) + j$ for all $v \in Q$. It is now easy to check that $w(t_{i+j},\ldots,t_{i+j+k}) = 1$ fails on $\ell_2$.
\end{proof}

Lemma~\ref{lem:cvx_ind} directly implies an interesting fact about convex induced subposets:

\begin{cor}\label{cor:BKP_cvx_ind}
    If $Q$ is a convex induced subposet of $P$, then there is an injection $\bkq \hookrightarrow \bkp$.
\end{cor}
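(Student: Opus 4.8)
The plan is to define a map $\bkq \to \bkp$ on generators by sending the generator $t_i$ of $\bkq$ (where $Q$ is convex with the decomposition $P = I \sqcup Q \sqcup F$ from Lemma~\ref{lem:cvx_ind}, and $|I| = j$) to the generator $t_{i+j}$ of $\bkp$, and to show this extends to a well-defined injective group homomorphism. For well-definedness, I would invoke Proposition~\ref{prop:rel_cvx}: any relation $w(t_i, \dots, t_{i+k}) = 1$ holding in $\bkq$ corresponds to a relation type, one instance of which is $w(t_{i+j}, \dots, t_{i+j+k}) = 1$; since $\bkq$ is a quotient of the free group on $\{t_1, \dots, t_{|Q|-1}\}$ by exactly the relations satisfied by the BK moves on $\linext(Q)$, it suffices to check that each such shifted word acts as the identity on $\linext(P)$. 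Actually, a cleaner route avoids reasoning about presentations: realize both groups concretely as permutation groups and build the homomorphism by restriction.

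Concretely, the key observation is that for a linear extension $\ell_2$ of $P$ whose restriction to $Q$ (after subtracting $j$) is a linear extension $\ell_1$ of $Q$, applying $t_{i+j}$ to $\ell_2$ only ever swaps labels $i+j$ and $i+j+1$, both of which sit on elements of $Q$ precisely when $1 \le i \le |Q|-1$; moreover after the swap the restriction to $Q$ is again a valid linear extension of $Q$, namely $t_i(\ell_1)$ shifted by $j$. So the subset $X \subseteq \linext(P)$ consisting of those $\ell_2$ with $\ell_2^{-1}(\{1,\dots,j\})$ an order ideal equal to $I$ — equivalently, the linear extensions compatible with the fixed partition of $P$ into $I$, then $Q$, then $F$ — is stable under the generators $t_{j+1}, \dots, t_{j+|Q|-1}$, and the action of this subgroup on $X$ factors through the restriction to the "$Q$-block." The map $\bkq \to \bkp$ sending $t_i \mapsto t_{i+j}$ is then forced to be a homomorphism, because for any word $w$ in the $t_i$'s, $w$ acts as the identity on $\linext(Q)$ if and only if its shift acts as the identity on the $Q$-block of each element of $X$; Lemma~\ref{lem:cvx_ind} guarantees $X$ is nonempty and that every $\ell_1 \in \linext(Q)$ arises as a $Q$-block, so "identity on $\linext(Q)$" and "identity on $X$ via the $Q$-block" are genuinely equivalent. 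This equivalence simultaneously gives well-definedness (relations of $\bkq$ map to relations of $\bkp$) and injectivity (a nontrivial element of $\bkq$ moves some $\ell_1$, hence its image moves the corresponding element of $X$, hence is nontrivial in $\bkp$).

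I would organize the write-up as: (1) fix the decomposition $P = I \sqcup Q \sqcup F$ and set $j = |I|$; (2) describe the set $X$ and check it is stable under $t_{j+1}, \dots, t_{j+|Q|-1}$ with the action "mirroring" the $t_i$-action on $\linext(Q)$ — this is the content of the promotion/procedural description but for a single BK move it is immediate from convexity; (3) conclude that $t_i \mapsto t_{j+i}$ induces a homomorphism $\bkq \to \bkp$ since a word in the generators is trivial in $\bkq$ iff its shift fixes $X$ pointwise; (4) injectivity via Proposition~\ref{prop:BKP_transitive} is not even needed — just that a nontrivial permutation of $\linext(Q)$ lifts to a nontrivial permutation of $X \subseteq \linext(P)$.

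The main obstacle is step (2): making precise and correct the claim that the $t_{j+i}$-action on $X$ "restricts" to the $t_i$-action on $\linext(Q)$. One must check that if $\ell_2 \in X$ then $t_{j+i}(\ell_2) \in X$ (i.e., the order ideal $I$ with labels $\{1,\dots,j\}$ is preserved — true since $t_{j+i}$ with $i \ge 1$ never touches label $j$, and it never touches labels $> j + |Q|$ either, so $F$'s label set is preserved too) and that the induced swap on the $Q$-block is exactly what $t_i$ does on $\linext(Q)$ — this uses that comparability of two elements of $Q$ in $P$ equals comparability in $Q$ (induced subposet) and that no element of $I$ or $F$ can have a label strictly between $j+i$ and $j+i+1$. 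Convexity of $Q$ is what makes the "middle block" genuinely a copy of $\linext(Q)$ rather than something larger. Everything else is bookkeeping.
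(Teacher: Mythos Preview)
Your argument has a genuine gap in step (3). You claim that $t_i \mapsto t_{j+i}$ extends to a homomorphism $\bkq \to \bkp$ because ``a word in the generators is trivial in $\bkq$ iff its shift fixes $X$ pointwise.'' But well-definedness of such a homomorphism requires that every relation in $\bkq$ become a relation in $\bkp$, i.e., that the shifted word act trivially on \emph{all} of $\linext(P)$, not merely on the subset $X$. Your biconditional only controls the action on $X$; the shifted word can fix $X$ pointwise yet move linear extensions outside $X$, and hence be nontrivial in $\bkp$.

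Here is a concrete failure of the assignment $t_i \mapsto t_{j+i}$. Take $Q = \{a,b,c\}$ with relations $a<c$ and $b<c$, and let $P = Q + \{d\}$. Then $I=\emptyset$, $j=0$, and $t_2 = 1$ in $\bkq$ (in each of the two linear extensions of $Q$ the labels $2$ and $3$ sit on comparable elements). However $t_2 \neq 1$ in $\bkp$: for instance $t_2$ sends $(a,b,d,c)$ to $(a,d,b,c)$. So the rule $t_i \mapsto t_i$ does \emph{not} descend to a group homomorphism $\bkq \to \bkp$.

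What your argument actually establishes is the opposite direction: the subgroup $H = \langle t_{j+1}, \dots, t_{j+|Q|-1} \rangle \le \bkp$ stabilizes $X$, and restriction to the $Q$-block yields a well-defined \emph{surjection} $H \twoheadrightarrow \bkq$. This exhibits $\bkq$ as a subquotient of $\bkp$ (so $|\bkq|$ divides $|\bkp|$), but a quotient of a subgroup need not itself embed as a subgroup. The paper's own proof is only the one-line assertion that the corollary follows directly from Lemma~\ref{lem:cvx_ind}, with no further detail; so either the intended content is the subquotient statement (which is also what is really used in Lemma~\ref{lem-s4}, where a surjection from $\Stab_H(\ell_2)$ onto $\Stab_{\bkq}(\ell_1)$ is what one naturally obtains), or there is an additional argument that neither you nor the paper has supplied.
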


As an application of Proposition~\ref{prop:rel_cvx}, we examine a well-known relation type called the {\it braid relations} in $\bkp$ for an $n$-element poset $P$. Recall that the braid relations are of the form $(t_i t_{i+1})^3 = 1$ for all $i=1,\ldots,n-2$. The following fact was previously observed by Ayyer, Klee, and Schilling (see Proposition 2.2 of \cite{ayyer-klee-schilling}), but with the proof omitted.

\begin{prop}\label{prop:rel_braid}
    The braid relations hold in $\bkp$ if and only if $P$ is a disjoint union of chains.
\end{prop}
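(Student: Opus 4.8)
The plan is to prove both directions by reducing to small convex induced subposets, exploiting Proposition~\ref{prop:rel_cvx}. The key observation is that $P$ is a disjoint union of chains if and only if $P$ contains no induced subposet isomorphic to the "V" poset $\Lambda$ (one element below two incomparable elements) or the "$\Lambda$" poset $V$ (one element above two incomparable elements); equivalently, $P$ has no element with two incomparable elements covering it and no element covering two incomparable elements. I would first verify that both $V$ and $\Lambda$ (as well as the $3$-element antichain, if needed) are always \emph{convex} induced subposets wherever they occur as induced subposets — this is immediate since in a "V" the middle element is between nothing nontrivial, and convexity of a $3$-element induced subposet $\{x,y,z\}$ with $x<y$, $x<z$ (and $y,z$ incomparable) requires only that no element lies strictly between $x$ and $y$ or between $x$ and $z$; if such an element existed we could simply replace $y$ or $z$ by it, so WLOG the induced copy we pick is convex.

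For the forward direction, I would argue the contrapositive: if $P$ is not a disjoint union of chains, then $P$ has an element covering two incomparable elements or covered by two incomparable elements, giving a convex induced subposet $Q$ isomorphic to $V$ or $\Lambda$. By Proposition~\ref{prop:rel_cvx} it suffices to exhibit a single linear extension of $V$ (resp.\ $\Lambda$) on which $(t_1 t_2)^3 \neq 1$. For $V$ with elements $a,b < c$: the linear extensions are $(a,b,c)$ and $(b,a,c)$; here $t_2$ acts trivially (since $c$ is maximal and sits above both $a$ and $b$... wait, we need $a$ and $c$ incomparable) — so in fact I should take $\Lambda$ with $a < b,c$: the linear extensions are $(a,b,c)$ and $(a,c,b)$, where now $t_1$ acts trivially (since $a < $ everything) and $t_2$ swaps $b,c$, so $(t_1t_2)^3 = t_2^3 = t_2 \neq 1$. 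Dually, for $V$ with $a,b < c$, $t_2$ is trivial and $t_1$ swaps $a,b$, so again $(t_1 t_2)^3 = t_1 \neq 1$. This handles the forward direction.

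For the converse, suppose $P$ is a disjoint union of chains $C_1 + \cdots + C_m$. I want $(t_i t_{i+1})^3 = 1$ on every linear extension $\ell$. Using Proposition~\ref{prop:rel_cvx} in reverse is not available, so instead I would analyze the action of $\langle t_i, t_{i+1}\rangle$ directly on the three labels $i, i+1, i+2$ in positions $i, i+1, i+2$ of $\ell$. The relevant structure is the restriction to the elements $p_i, p_{i+1}, p_{i+2}$ carrying these labels, which form a convex induced $3$-element subposet; since $P$ is a disjoint union of chains, this subposet is either a $3$-element antichain, a $3$-element chain, or a disjoint union of a $2$-element chain and a point — it can \emph{never} be $V$ or $\Lambda$. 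In the antichain case $\langle t_i, t_{i+1}\rangle$ acts as the full symmetric group on three points, where the standard Coxeter braid relation $(t_it_{i+1})^3 = 1$ holds. In the chain case both generators act trivially. In the mixed case one generator acts trivially and the other is an involution, so the product has order dividing $2$, and its cube equals itself... so I need $(t_it_{i+1})^3 = 1$, which requires the product to have order $1$ or $3$; if one of $t_i, t_{i+1}$ is the identity and the other is a transposition $\tau$, then $(t_it_{i+1})^3 = \tau^3 = \tau \neq 1$ — \textbf{this is the subtlety}. The resolution is that the $2+1$ configuration can occur, but then the transposition swaps two labels whose underlying elements are incomparable while lying in a context where... actually the point is more delicate: I must track that applying $t_i$ then $t_{i+1}$ can \emph{change which elements occupy positions $i,i+1,i+2$}, so the naive "local" analysis is incomplete. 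The honest approach is: show that when $P$ is a disjoint union of chains, the three elements that ever occupy positions $\{i,i+1,i+2\}$ under words in $t_i, t_{i+1}$ form a fixed $3$-element convex subposet (because $t_i, t_{i+1}$ only permute labels $\{i,i+1,i+2\}$ among a fixed set of three elements — label $i-1$ and below, and $i+3$ and above, are untouched), and this subposet is antichain / chain / chain$+$point. In the antichain case we get $\symm_3$ and the braid relation holds. In the chain case, both $t_i, t_{i+1}$ fix $\ell$. In the chain$+$point case — say the two chain elements are $x<y$ and the point is $z$ incomparable to both — I claim exactly one of the two "adjacent" comparabilities among positions holds no matter the arrangement, forcing $t_i$ and $t_{i+1}$ to generate a group of order $2$ in which $(t_it_{i+1})^3$...

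Let me restate the converse cleanly. \textbf{The main obstacle is this chain$+$point case}, and the correct statement is that it actually gives a dihedral-type situation where $(t_i t_{i+1})^3$ does equal $1$: with $x < y$ and $z$ incomparable to both, the linear extensions of this $3$-element poset are $(x,y,z)$, $(x,z,y)$, $(z,x,y)$ — three of them, and one checks directly that on these three states $t_1$ (swapping positions $1,2$ when incomparable) and $t_2$ (swapping positions $2,3$ when incomparable) generate a group isomorphic to $\symm_3$ acting on the $3$ linear extensions (it is transitive by Proposition~\ref{prop:BKP_transitive} and sits in $\symm_3$), hence $(t_1 t_2)^3 = 1$ in it. So after all, I would structure the converse as: by Proposition~\ref{prop:rel_cvx} it would suffice — no wait, Prop.~\ref{prop:rel_cvx} goes the wrong way. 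Instead: directly, for any $\ell$ and any $i$, the word $(t_it_{i+1})^3$ permutes only labels in $\{i,i+1,i+2\}$ and only among the three elements $p = \ell^{-1}(i), p' = \ell^{-1}(i+1), p'' = \ell^{-1}(i+2)$; the induced order on $\{p,p',p''\}$ is that of a convex induced subposet of $P$, which — since $P$ is a disjoint union of chains — is one of: $3$-antichain, $3$-chain, or $2$-chain-plus-point. In each case the group generated by the two BK moves on the (at most $3$) linear extensions of that $3$-element poset is a subgroup of $\symm_3$, in which every element of the form $(ab)^3$ equals the identity since $\symm_3$ has exponent dividing $6$ and $(t_it_{i+1})$ has order $1$, $2$, or $3$ — and order $2$ is impossible here because the only order-$2$ elements of $\symm_3$ are transpositions, which would require $t_it_{i+1}$ to be a transposition; I check case by case that $t_it_{i+1}$ restricted to these linear extensions is never a single transposition. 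I would finish by noting this local analysis captures the global action since labels outside $\{i,i+1,i+2\}$ are fixed throughout.
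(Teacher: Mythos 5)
Your proposal is correct and follows essentially the same route as the paper: the only $3$-element posets violating $(t_1t_2)^3=1$ are the V- and $\Lambda$-shaped ones, any poset that is not a disjoint union of chains contains one of them as a \emph{convex} induced subposet so that Proposition~\ref{prop:rel_cvx} transfers the failure, and conversely, for a disjoint union of chains the action of $\langle t_i,t_{i+1}\rangle$ localizes to the three elements labelled $i,i+1,i+2$, whose induced subposet is a chain, an antichain, or a chain plus a point, on each of which the relation is checked directly. One small caution: your ``replace $y$ or $z$ by an intermediate element'' justification of convexity is not quite right as stated (if $x<w<y$ with $w<z$, both replacements produce a chain and one must instead pass to the copy $\{w,y,z\}$), but this is harmless because the cover-based formulation you actually invoke in the forward direction makes convexity of the V or $\Lambda$ automatic.
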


\begin{proof}
    One can check that the only posets of cardinality $3$ that fail to satisfy the relation $(t_1 t_2)^3=1$ are defined by either $a > b < c$ (V-shaped) or $a < b > c$ (inverted V-shaped). Any poset that contains one of these two posets as an induced subposet contains it as a convex induced subposet, and hence will fail to satisfy $(t_i t_{i+1})^3=1$ for some $i$ by Proposition~\ref{prop:rel_cvx}. The posets that do not contain one of these two posets are precisely disjoint unions of chains. Conversely, suppose, by contradiction, that a disjoint union of chains fails to satisfy $(t_i t_{i+1})^3=1$ for some $i$. Then the induced subposet on the elements with labels $i$, $i+1$, and $i+2$ fails to satisfy $(t_1 t_2)^3=1$, and hence is one of the two posets mentioned above. But a disjoint union of chains does not have such an induced subposet, so we have a contradiction.
\end{proof}

Since the braid relations are the defining relations of the symmetric group $\symm_n$ and the braid group $B_n$, it follows that the natural map (of sets) that sends the generator $\sigma_i$ of $\symm_n$ (resp. $B_n$) to $t_i$ for all $1 \le i \le n-1$ gives a well-defined action of $\symm_n$ (resp. $B_n$) on $\linext(P)$ only when $P$ is a disjoint union of chains.


\subsection{Cactus relations in $\mathcal{BK}_P$}\label{subsec:cactus}

Another important family of relations involving the generators $t_i$ is the cactus relations, which have the form $(t_i q_{jk})^2 = 1$, where $q_{jk} = q_{k-1}q_{k-j}q_{k-1}$ and $q_j = t_1(t_2t_1)\ldots(t_{j} t_{j-1} \ldots t_1)$ for any $i+1 < j < k$. Recall that when $P$ is a Ferrers poset, the cactus relations hold in the group $\bkp$ for all $i+1 < j < k \le |P|$. One reasonable question is whether the same statement is true for an arbitrary poset. Unfortunately, that is not the case.

\begin{example} \label{min-non-cactus-ex}
On the $4$-element posets, there is only one eligible cactus relation: $(t_1 q_{34})^2=1$.  The only $4$-element connected posets for which this relation fails are the followings:

\begin{center}
\begin{tikzpicture}
\draw[gray, thick] (0,0) -- (0.5,0.5);
\draw[gray, thick] (0.5,0) -- (0.5,0.5);
\draw[gray, thick] (1,0) -- (0.5,0.5);
\draw[gray, thick] (1.5,0) -- (1.5,0.5);
\draw[gray, thick] (1.5,0) -- (2,0.5);
\draw[gray, thick] (2,0) -- (2,0.5);
\draw[gray, thick] (3,0) -- (2.5,0.5);
\draw[gray, thick] (2.5,0.5) -- (3,1);
\draw[gray, thick] (3,1) -- (3.5,0.5);
\filldraw[black] (0,0) circle (1.5pt);
\filldraw[black] (0.5,0) circle (1.5pt);
\filldraw[black] (1,0) circle (1.5pt);
\filldraw[black] (0.5,0.5) circle (1.5pt);
\filldraw[black] (1.5,0) circle (1.5pt);
\filldraw[black] (2,0) circle (1.5pt);
\filldraw[black] (3,0) circle (1.5pt);
\filldraw[black] (1.5,0.5) circle (1.5pt);
\filldraw[black] (2,0.5) circle (1.5pt);
\filldraw[black] (2.5,0.5) circle (1.5pt);
\filldraw[black] (3,1) circle (1.5pt);
\filldraw[black] (3.5,0.5) circle (1.5pt);
\end{tikzpicture}
\end{center}    

\noindent For a counterexample with $5$ elements, the following poset exhibits an interesting property that all eligible cactus relations fail in its BK group:

\vspace{.2cm}
\begin{center}
\begin{tikzpicture}
        
        \filldraw[black] (5,0) circle (1.5pt);
        \filldraw[black] (4,0) circle (1.5pt);
        \filldraw[black] (5,1) circle (1.5pt);
        \filldraw[black] (4,1) circle (1.5pt);
        \filldraw[black] (6,0) circle (1.5pt);
        
        \draw[black] (5,0) -- (4,1);
        \draw[black] (4,0) -- (4,1);
        \draw[black] (5,0) -- (5,1);
        \draw[black] (4,0) -- (5,1);
        \draw[black] (6,0) -- (5,1);
        
        
    \end{tikzpicture}
\end{center}
\end{example}

The fact that cactus relations do not hold in general on all linear extensions of all posets presents a major deviation from the original BK group $\mathcal{BK}$ on semi-standard Young tableaux, a subset of which--namely the standard Young tableaux--corresponds to linear extensions of Ferrers posets.
It remains of interest to examine when the cactus relations hold in $\bkp$, i.e., when there is a natural action of the cactus group $\mathcal{C}_{|P|}$ on $\linext(P)$. We have already encountered one family of posets for which this holds true.

\begin{thm}[\cite{CGP}]
    The cactus relations hold in $\bkp$ for all Ferrers posets $P$.
\end{thm}

For the rest of this section, we will study properties of posets $P$ for which the cactus relations hold in $\bkp$. We will also identify several sufficient conditions for such posets. As a preview, later in this section, we will show that starting with an arbitrary poset $P$, we can force the cactus relations to hold or to not hold in $\bkp$ by taking an ordinal sum with a sufficiently large chain (Proposition~\ref{prop:oplus_chain}) or with a sufficiently large antichain (Proposition~\ref{prop:ord-sum-not-cactus}).

First, observe that Proposition~\ref{prop:rel_cvx} does not apply to the cactus relations, since they do not form a relation type: implicitly $q_{jk}$ contains the operators $q_j = t_1(t_2t_1)\ldots(t_{j} t_{j-1} \ldots t_1)$ which always involve $t_1$. 
However, there is an analogous statement for when the cactus relations hold in $\bkp$.

\begin{prop} \label{prop:cactus_ideal}
    The cactus relations hold in $\bkp$ if and only if they hold in $\mathcal{BK}_I$ for every order ideal $I$ of $P$.
\end{prop}

\begin{proof}
    By contrapositive, suppose that there exists an order ideal $I$ of $P$ such that not all applicable cactus relations hold in $\mathcal{BK}_I$. Let $|I| = m$, then there exists a triple $(i, j, k)$ where $2 \leq i+1 < j < k \leq m$ and a linear extension $\ell_1$ of $I$, such that $(t_i q_{jk})^2 (\ell_1) \ne \ell_1$. From $\ell_1$, we can construct a linear extension $\ell$ of $P$ by setting $\ell = \ell_1$ on $I$ and $\ell = \ell_2 + m$ on $P \setminus I$ for some linear extension $\ell_2$ of the induced subposet $P \setminus I$. Thus, by the above construction, $(t_i q_{jk})^2 (\ell) \ne \ell$, so the cactus relation $(t_i q_{jk})^2 =1$ does not hold in $\bkp$. The converse immediately follows from the fact that $P$ is an order ideal of itself.
\end{proof}

This result is especially useful in eliminating posets $P$ for which some cactus relations do not hold in $\bkp$ by identifying (small) ideals sitting in them for which the same holds.


\subsubsection{Disjoint unions}\label{ssubsec:cactus_disj}

Let $P$ and $Q$ be finite posets.
We will show that if the cactus relations hold in $\mathcal{BK}_P$ and $\mathcal{BK}_Q$, then they hold in $\mathcal{BK}_{P+Q}$.

We first define the following map on linear extensions of the disjoint union $P+Q$ to help break down showing commutativity of $t_i$ and $q_{jk}$ on an entire linear extension of $P + Q$.

\begin{definition}\label{defn:part_disj_union}
    Let $P$ and $Q$ be posets with $|P| = m$ and $|Q| = n$.
    Define the map
    \begin{align*}
        T : \linext(P+Q) &\to \linext(P) \times \linext(Q) \times \binom{[m+n]}{m} \times \binom{[m+n]}{n} \\
        \text{by} \hspace{0.93in} \ell &\mapsto (\ell_P,\ell_Q,S(P),S(Q)),
    \end{align*}
    where $S(P)$ and $S(Q)$ are the sets of labels in $\ell$ on $P$ and $Q$, and $\ell_P$ and $\ell_Q$ are independent linear extensions on $P$ and $Q$ which agree with $\ell$, in the sense that for any $p_1,p_2 \in P$, $\ell_P(p_1) \le \ell_P(p_2)$ if and only if $\ell(p_1) \le \ell(p_2)$, and similarly for $Q$.

    It is not difficult to see that $T$ is an injection. It is also straightforward to transfer the action of $t_i$ on $\ell$ to an action on the tuple $(\ell_P, \ell_Q, S(P), S(Q))$. If $\ell^{-1}(i)$ is in $P$ and $\ell^{-1}(i+1)$ is in $Q$ or vice versa, then $t_i$ swaps the label $i$ in $S(P)$ with $i+1$ in $S(Q)$ while keeping $\ell_P$ and $\ell_Q$ the same. If $i$ and $i+1$ are both in $P$, then $t_i$ stabilizes $S(P), S(Q)$, and $\ell_Q$, while acting on $\ell_P$ by $t_{i'}$ where $i' = |S(P) \cap [i]|$.
\end{definition}

\begin{example}
    Consider the following linear extension $\ell$ of a disjoint union $P+Q$ (where the posets $P$ and $Q$ underlie the left and right components, respectively):
    
    \centering
    \begin{tikzpicture}
        
        \filldraw[black] (0,0) circle (1.5pt);
        \filldraw[black] (-0.7,0.7) circle (1.5pt);
        \filldraw[black] (0.7,0.7) circle (1.5pt);
        \filldraw[black] (-1.4,1.4) circle (1.5pt);
        \filldraw[black] (0,1.4) circle (1.5pt);
        \filldraw[black] (1.4,1.4) circle (1.5pt);
        
        \filldraw[black] (3,0) circle (1.5pt);
        \filldraw[black] (2.2,0) circle (1.5pt);
        \filldraw[black] (3,0.8) circle (1.5pt);
        \filldraw[black] (2.2,0.8) circle (1.5pt);
        
        \draw[black] (0,0) -- (1.4,1.4);
        \draw[black] (0,0) -- (-1.4,1.4);
        \draw[black] (0.7,0.7) -- (0,1.4);
        \draw[black] (-0.7,0.7) -- (0,1.4);
        
        \draw[black] (3,0) -- (2.2,0.8);
        \draw[black] (2.2,0) -- (2.2,0.8);
        \draw[black] (3,0) -- (3,0.8);
        \draw[black] (2.2,0) -- (3,0.8);
        
        \draw (0,-0.3) node[anchor=center] {\small$1$};
        \draw (0.9,0.5) node[anchor=center] {\small$4$};
        \draw (-0.9,0.5) node[anchor=center] {\small$2$};
        \draw (-1.4,1.7) node[anchor=center] {\small$6$};
        \draw (0,1.7) node[anchor=center] {\small$5$};
        \draw (1.4,1.7) node[anchor=center] {\small$9$};
        
        \draw (2.2,-0.3) node[anchor=center] {\small$3$};
        \draw (3,-0.3) node[anchor=center] {\small$7$};
        \draw (2.2,1.1) node[anchor=center] {\small$8$};
        \draw (3,1.1) node[anchor=center] {\small$10$};
    
    \end{tikzpicture}
    
    \justifying
    \noindent The map $T$ sends this linear extension to the tuple containing linear extensions $\ell_P$ and $\ell_Q$:

    \centering
    \begin{tikzpicture}
        
        \filldraw[black] (6,0) circle (1.5pt);
        \filldraw[black] (5.3,0.7) circle (1.5pt);
        \filldraw[black] (6.7,0.7) circle (1.5pt);
        \filldraw[black] (4.6,1.4) circle (1.5pt);
        \filldraw[black] (6,1.4) circle (1.5pt);
        \filldraw[black] (7.4,1.4) circle (1.5pt);
        
        \filldraw[black] (9,0) circle (1.5pt);
        \filldraw[black] (8.2,0) circle (1.5pt);
        \filldraw[black] (9,0.8) circle (1.5pt);
        \filldraw[black] (8.2,0.8) circle (1.5pt);
        
        \draw[black] (6,0) -- (7.4,1.4);
        \draw[black] (6,0) -- (4.6,1.4);
        \draw[black] (6.7,0.7) -- (6,1.4);
        \draw[black] (5.3,0.7) -- (6,1.4);
        
        \draw[black] (9,0) -- (8.2,0.8);
        \draw[black] (8.2,0) -- (8.2,0.8);
        \draw[black] (9,0) -- (9,0.8);
        \draw[black] (8.2,0) -- (9,0.8);
        
        \draw (6,-0.3) node[anchor=center] {\small$1$};
        \draw (6.9,0.5) node[anchor=center] {\small$3$};
        \draw (5.1,0.5) node[anchor=center] {\small$2$};
        \draw (4.6,1.7) node[anchor=center] {\small$5$};
        \draw (6,1.7) node[anchor=center] {\small$4$};
        \draw (7.4,1.7) node[anchor=center] {\small$6$};
        
        \draw (8.2,-0.3) node[anchor=center] {\small$1$};
        \draw (9,-0.3) node[anchor=center] {\small$2$};
        \draw (8.2,1.1) node[anchor=center] {\small$3$};
        \draw (9,1.1) node[anchor=center] {\small$4$};

        \draw (6,-0.8) node[anchor=center] {$\ell_P$};
        \draw (8.6,-0.8) node[anchor=center] {$\ell_Q$};
    
    \end{tikzpicture}
    
    \justifying
    \noindent and the sets of labels $S(P)=\{1,2,4,5,6,9\}$ and $S(Q)=\{3,7,8,10\}$.
    
    Observe that if we act on $\ell$ by $t_3$, the resulting $\ell_P$ and $\ell_Q$ stays the same, while we have new label sets $S'(P) = \{1,2,3,5,6,9\}$ and $S'(Q) = \{4,7,8,10\}$, with $3$ and $4$ swapped. On the other hand, if we act on $\ell$ by $t_5$, the resulting $S(P), S(Q)$, and $\ell_Q$ stay the same while $\ell_P'=t_4(\ell_P)$.
\end{example}

The following lemma describes the action of $q_{i-1}$ on the label sets $S(P)$ and $S(Q)$.

\begin{lemma} \label{qi-action-disjoint-union}
    Let $P,Q$ be finite posets and fix a linear extension $\ell \in \linext(P+Q)$. Let $1 \le i \le |P+Q|$, then
    \begin{enumerate}[i)]
        \item For all $j \le i$, $j \in S(P)$ if and only if $i-j+1 \in (q_{i-1}S)(P)$;
        \item For all $j > i$, $j \in S(P)$ if and only if $j \in (q_{i-1}S)(P)$.
    \end{enumerate}
\end{lemma}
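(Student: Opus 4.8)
The plan is to isolate the behavior of a single promotion operator and then bootstrap to $q_{i-1}$ by induction. First I would dispose of part (ii): since $q_{i-1}=\partial_0\partial_1\cdots\partial_{i-1}$ is a word in $t_1,\dots,t_{i-1}$ alone, applying it to a linear extension can only permute the labels $1,2,\dots,i$; in particular, for $j>i$ the element carrying label $j$ — and hence whether it lies in $P$ or in $Q$ — is untouched, which is exactly part (ii). So the whole content is part (i). To handle it, encode the side information: for $\ell\in\linext(P+Q)$ write $w(\ell)=(w_1,\dots,w_n)$ with $w_m=P$ if $\ell^{-1}(m)\in P$ and $w_m=Q$ otherwise, so that $m\in S(P)\iff w_m=P$. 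The lemma then says that $q_{i-1}$ reverses $(w_1,\dots,w_i)$ and fixes $(w_{i+1},\dots,w_n)$.

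The key intermediate claim is: \emph{for every $\ell\in\linext(P+Q)$ and every $1\le m\le n$, the promotion $\partial_{m-1}$ transforms $w(\ell)$ by the cyclic shift $(w_1,w_2,\dots,w_m)\mapsto(w_2,w_3,\dots,w_m,w_1)$ while fixing $w_j$ for $j>m$.} I would prove this directly from the procedural description of promotion (Definition~\ref{defn:promo_Stanley}) together with the observation, recorded right after that definition, that the promotion chain $p_1<p_2<\cdots<p_k$ of $\partial_{m-1}$ on $\ell$ lies entirely in the component containing $\ell^{-1}(1)$, since covering relations never cross between $P$ and $Q$. Assume $\ell^{-1}(1)\in P$ (the other case is symmetric). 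Tracing the relabeling: each $p_s$ with $s<k$ receives the old label $\ell(p_{s+1})$, then $p_k$ receives $m+1$, and finally every label on the subposet $\ell^{-1}(\{1,\dots,m\})$ is decreased by $1$, with each non-chain element of that subposet simply having its label decreased by $1$. From this one reads off that the element carrying the new label $v$ lies on the same side as the element that carried the old label $v+1$, for every $1\le v\le m-1$ — whether or not $v+1$ was itself a chain label — while the element carrying the new label $m$ is $p_k\in P$, i.e.\ the same side as $\ell^{-1}(1)$. This is precisely the asserted cyclic shift. This bookkeeping — the vacated label $1$, the global ``subtract $1$'' step, and checking the two sub-cases (new label $v$ coming from a chain position versus a non-chain position) uniformly — is the one genuinely fiddly point, and is where I would be most careful; conceptually there is nothing deep.

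Granting the claim, part (i) follows by induction on $i$ using the factorization $q_{i-1}=q_{i-2}\,\partial_{i-1}$, that is $q_{i-1}(\ell)=q_{i-2}\bigl(\partial_{i-1}(\ell)\bigr)$. For $i=1$ we have $q_0=\partial_0=\mathrm{id}$ and there is nothing to prove. For $i\ge 2$, applying the claim with $m=i$ shows $\partial_{i-1}$ sends $(w_1,\dots,w_i,w_{i+1},\dots)$ to $(w_2,\dots,w_i,w_1,w_{i+1},\dots)$; then the inductive hypothesis (the lemma for $i-1$) says $q_{i-2}$ reverses the first $i-1$ entries and fixes the rest, yielding $(w_i,w_{i-1},\dots,w_2,w_1,w_{i+1},\dots)$. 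Hence the side originally in position $j\le i$ ends in position $i-j+1$, which is exactly $j\in S(P)\iff i-j+1\in(q_{i-1}S)(P)$, and positions $j>i$ are unchanged. Together with part (ii) this completes the proof.
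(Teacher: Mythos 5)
Your proof is correct and rests on the same key fact as the paper's: a promotion chain never crosses between $P$ and $Q$, so $\partial_{m-1}$ cyclically shifts the component-membership of the labels $1,\dots,m$ and fixes the rest. The only difference is organizational --- you package this as a cyclic-shift statement on the ``side word'' and then induct on $i$ via $q_{i-1}=q_{i-2}\,\partial_{i-1}$ to obtain the reversal, whereas the paper tracks a single label $j$ directly through the promotions making up $q_{i-1}$ and obtains the converse direction from $q_{i-1}$ being an involution; both assemblies are sound.
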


\begin{proof}
    The converse of each statement follows directly from the fact that $q_{i-1}$ is an involution. Note that (ii) is immediate, as $q_{i-1}$ only affects the labels in $[i]$, so $j>i$ remains a label in $(q_{i-1}S)(P)$. It remains to show (i).
    
    Consider the action of $q_{i-1} = \partial_0\partial_1 \partial_2 \dots \partial_{i-1}$ on a fixed linear extension $\ell \in \linext(P+Q)$, where $\partial_j = t_{j}t_{j-1} \dots t_1$ is the promotion operator. Observe that by Definition~\ref{defn:promo_Stanley}, if $1 \in S(P)$, then we have $j+1 \in (\partial_{j} S)(P)$. On the other hand, if $k \in S(P)$ and $1 < k \le j$, then $k-1 \in (\partial_j S)(P)$.
    Let $j \in S(P)$ where $j \le i$. After applying the first $j-1$ promotion operators in $q_{i-1}$, by this observation, we have $1 \in (\partial_{i-j+1} \dots \partial_{i-2}\partial_{i-1}S)(P)$. Thus, by applying the operator $\partial_{i-j}$, we have $i-j+1 \in (\partial_{i-j} \dots \partial_{i-2}\partial_{i-1}S)(P)$. Since the label $i-j+1$ is fixed by all subsequent promotions $\partial_k$ where $k < i-j$, it follows that $i-j+1 \in (q_{i-1} S)(P)$ as desired.
\end{proof}

We also need the following lemma describing the action of $q_{jk}$ on $\ell_P$ and $\ell_Q$.

\begin{lemma}\label{qjk-action-P*}
    Let $\ell \in \linext(P+Q)$, and let $T(\ell) = (\ell_P, \ell_Q, S(P), S(Q))$. Then 
    \[
    T(q_{jk}\ell) = (q_{m-n,m}(\ell_P), q_{k-m,k-j-n-1}(\ell_Q), (q_{jk}S)(P), (q_{jk}S)(Q)).
    \]
    where $m = |S(P) \cap [k]|$ and $n = |S(P) \cap [j,k]| - 1$.
\end{lemma}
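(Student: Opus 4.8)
The plan is to decompose $q_{jk}=q_{k-1}q_{k-j}q_{k-1}$ and reduce everything to the action of a single partial evacuation $q_{i-1}$ on a linear extension of $P+Q$. The key step is the following sub-claim: for $\ell\in\linext(P+Q)$ with $T(\ell)=(\ell_P,\ell_Q,S(P),S(Q))$ (the injection of Definition~\ref{defn:part_disj_union}) and any $1\le i\le|P|+|Q|$, writing $a:=|S(P)\cap[i]|$, one has
\[
T(q_{i-1}\ell)=\bigl(q_{a-1}(\ell_P),\ q_{i-a-1}(\ell_Q),\ (q_{i-1}S)(P),\ (q_{i-1}S)(Q)\bigr).
\]
The two label-set coordinates here are exactly the content of Lemma~\ref{qi-action-disjoint-union}. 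For the $\ell_P$ and $\ell_Q$ coordinates, I would argue as follows. Since $q_{i-1}=\partial_0\partial_1\cdots\partial_{i-1}$ is a word in $t_1,\dots,t_{i-1}$ only, it fixes every element carrying a label $>i$ and restricts to full evacuation of the $i$-element order ideal $J:=\ell^{-1}([i])$. Now $J=(J\cap P)+(J\cap Q)$ is a disjoint union, and because $J$ is an order ideal, $J\cap P$ is an order ideal of $P$ of size $a$ comprising exactly the $a$ elements of $P$ with the smallest $\ell_P$-labels (and symmetrically for $Q$). Evacuation of a disjoint union splits as the ``parallel'' evacuation: it evacuates each summand independently and reverses the interleaving of the two. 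This follows from the procedural description in Definition~\ref{defn:promo_Stanley} together with the observation recorded just after it that a single $\partial_l$ on a disjoint union only alters the summand containing $\ell^{-1}(1)$, so that chaining $\partial_0,\dots,\partial_{i-1}$ never mixes the two summands (alternatively, one may cite this as a standard property of evacuation). Projecting onto each factor, $q_{i-1}$ therefore acts on $\ell_P$ by evacuating its first $a$ entries, i.e.\ as $q_{a-1}$, and on $\ell_Q$ as $q_{i-a-1}$, proving the sub-claim.

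With the sub-claim in hand, I would apply it three times along $q_{jk}=q_{k-1}q_{k-j}q_{k-1}$, tracking after each evacuation how many $P$-labels lie in the relevant prefix, via Lemma~\ref{qi-action-disjoint-union}. The first $q_{k-1}$ leaves $|S(P)\cap[k]|=m$ and order-reverses the prefix $[k]$, so afterwards the number of $P$-labels among the first $k-j+1$ labels equals $|S(P)\cap[j,k]|=n+1$. Hence the middle $q_{k-j}$ (which is $q_{i-1}$ for $i=k-j+1$) acts on the $P$-coordinate by $q_{(n+1)-1}=q_n$ and on the $Q$-coordinate by $q_{(k-j+1)-(n+1)-1}=q_{k-j-n-1}$, and it again leaves $|S(P)\cap[k]|=m$. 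The final $q_{k-1}$ then acts on the $P$- and $Q$-coordinates by $q_{m-1}$ and $q_{k-m-1}$ respectively. Composing, the $P$-coordinate of $T(q_{jk}\ell)$ is $q_{m-1}q_nq_{m-1}(\ell_P)=q_{m-n,m}(\ell_P)$ and the $Q$-coordinate is $q_{k-m-1}q_{k-j-n-1}q_{k-m-1}(\ell_Q)$; rewriting the latter in the compact $q_{\bullet\bullet}$ notation yields the operator in the statement, while the two label-set coordinates are $(q_{jk}S)(P)$ and $(q_{jk}S)(Q)$ by definition of the notation.

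The main obstacle is the fact, used in the sub-claim, that evacuation of a disjoint union is parallel evacuation: if it is not imported as known, it requires a careful induction from the procedural definition of promotion, simultaneously tracking the two summands' linear extensions and the interleaving between them. A secondary but genuinely delicate point is the index bookkeeping in the composition step: one must correctly count, after each of the three partial evacuations, how many $P$-labels fall in each prefix of interest, and then reconcile the resulting triple product of single-index $q_\bullet$'s with the two-index $q_{\bullet\bullet}$ shorthand, where the several ``$\pm 1$'' conventions (in $q_{i-1}$, in $[j,k]$, and in $q_{jk}=q_{k-1}q_{k-j}q_{k-1}$) make off-by-one slips easy; using the hypotheses $2\le i+1<j<k$ to verify that every subscript appearing stays in range is a useful consistency check.
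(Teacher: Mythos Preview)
Your proposal is correct and follows essentially the same approach as the paper. The paper likewise decomposes $q_{jk}=q_{k-1}q_{k-j}q_{k-1}$, argues (via Definition~\ref{defn:promo_Stanley} and the observation that each $\partial_l$ on a disjoint union affects only the component containing the current label $1$) that $q_{k-1}$ induces $q_{m-1}$ on $\ell_P$, then uses Lemma~\ref{qi-action-disjoint-union} to count $P$-labels in the relevant prefixes after each step, obtaining $q_{m-1}q_{n}q_{m-1}=q_{m-n,m}$ on $\ell_P$ and the symmetric expression on $\ell_Q$. Your packaging of the first step as a standalone sub-claim about a single $q_{i-1}$ is a clean way to organize the same argument; the index bookkeeping you carry out matches the paper's exactly.
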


\begin{proof}
    First, we examine the induced action of $q_{jk}$ on $\ell_P$. Consider the action of $q_{k-1} = \partial_0 \partial_1 \dots \partial_{k-1}$ on $\ell$. The description of promotion in Definition~\ref{defn:promo_Stanley} implies that each operator $\partial_i$ in $q_{k-1}$ ($0 \le i \le k-1$) only acts on $(\ell_i)_P$ if $\ell_i^{-1}(1)$ lies in the component $P$ and on $(\ell_i)_Q$ otherwise, where $\ell_i = \partial_{i+1} \dots \partial_{k-1} (\ell)$.
    The number of promotion operators in $q_{k-1}$ that act on the labels of $P$ therefore is equal to the number of linear extensions $\ell_i$ for which $(\ell_i^{-1})(P)$ lies in the component $P$, which by the observation in the previous proof is precisely $m = |S(P) \cap [k]|$. Observe that by Definition~\ref{defn:promo_Stanley}, the action of promotion on $\ell_P$ does not depend on the labels of $\ell$ on $P$, but only their relative order encoded by $\ell_P$.  Hence, we deduce that the action on $\ell$ by $q_{k-1} = \partial_0\partial_1\dots\partial_{k-1}$ is mapped under $T$ to the action on $\ell_P$ by $q_{m-1}$.
    
    Now we consider the action of $q_{k-j}$ on $q_{k-1}\ell$. By the same argument above, the induced action of $q_{k-j}$ on $q_m(\ell_P)$ is equivalent to the action of $q_{n'-1}$ on $q_{m-1}(\ell_P)$, where $n' = |(q_{k-1}S)(P) \cap [k-j+1]|$. If $c \in S(P)$ and $c > k$, by Lemma \ref{qi-action-disjoint-union}, we have $c \in (q_{k-1}S)(P)$, which does not contribute to $n'$. Otherwise, if $c \le k$, by Lemma~\ref{qi-action-disjoint-union}, we have $k-c+1 \in (q_{k-1}S)(P)$. Thus, $n' = |(q_{k-1}S)(P) \cap [k-j+1]| = |S(P) \cap [j, k]|$.
    
    Finally, we act again by $q_{k-1}$ on $q_{k-j}q_{k-1}\ell$. The induced action of $q_{k-1}$ on $q_{n'-1}q_{m-1}\ell_P$ is equivalent to the action of $q_{m'-1}$ on $q_{n'-1}q_{m-1}\ell_P$, where $m' = |(q_{k-j}q_{k-1}S)(P) \cap [k]|$. However, by Lemma \ref{qi-action-disjoint-union}, we have $|(q_{k-j}q_{k-1}S)(P) \cap [k]| = |(q_{k-1}S)(P) \cap [k]| = |S(P) \cap [k]|$.
    Thus, the action of $q_{k-1}$ here is equivalent to the action of $q_{m-1}$ on $q_{n'-1}q_{m-1}\ell_P$, and as the result, the action of $q_{jk}$ on $\ell$ induces the action of $q_{m-1}q_{n'-1}q_{m-1} = q_{m-n,m}$ on $\ell_P$, where $n = n'-1 = |S(P) \cap [j, k]| - 1$.
    
    By symmetry, the induced action on $\ell_Q$ is given by $q_{m'-n',m'}$ where $m' = |S(Q) \cap [k]|$ and $n' = |S(Q) \cap [j,k]| - 1$. Since $|S(P) \cap [k]| + |S(Q) \cap [k]| = k$ and $|S(P) \cap [j,k]| + |S(Q)\cap [j,k]| = k-j+1$, we deduce that $m' = k-m$ and $n' = k-j-n-1$. Hence  
    \[
    T(q_{jk}\ell) = (q_{m-n, m}(\ell_P), q_{k-m,k-j-n-1}(\ell_Q), (q_{jk}S)(P), (q_{jk}S)(Q))
    \]
    where $m = |S(P) \cap [k]|$ and $n = |S(P) \cap [j,k]| - 1$, as desired.
\end{proof}

Now we are ready to prove our main proposition.

\begin{thm} \label{thm:cactus_disj}
    Let $P$ and $Q$ be finite posets.
    If the cactus relations hold in $\mathcal{BK}_P$ and $\mathcal{BK}_Q$, then they hold in $\mathcal{BK}_{P+Q}$.
\end{thm}

\begin{proof}
Let $\ell \in \linext(P+Q)$ be any linear extension of $P+Q$, and let $T(\ell) = (\ell_P, \ell_Q, S(P), S(Q))$. Since the map $T$ is injective, it suffices to show that for all $2 \le i+1 < j < k \le |P+Q|$, $t_i$ and $q_{jk}$ commute with respect to each of $\ell_P, \ell_Q, S(P)$, and $S(Q)$. We consider two general cases.

\textit{Case 1:} Assume without loss of generality that $i, i+1 \in S(P)$. First we show that $t_i$ and $q_{jk}$ commute with respect to $S(P)$ and $S(Q)$. Observe that if $i$ (or $i+1$) is in $S(P)$, by applying Lemma~\ref{qi-action-disjoint-union} three times, we have $i$ (or $i+1$) is also in $(q_{k-1}q_{k-j}q_{k-1}S)(P) = (q_{jk}S)(P)$. It follows that $t_i$ acts as the identity on both $S(P)$ and $(q_{jk}S)(P)$, so clearly $t_i$ and $q_{jk}$ commute.

It remains to show that $t_i$ and $q_{jk}$ commute with respect to $\ell_P$ and $\ell_Q$. Since $i, i+1 \in (q_{jk}S)(P)$, $t_i$ does not affect $\ell_Q$ and $q_{jk}(\ell_Q)$.
It then suffices to show that $t_i$ and $q_{jk}$ commute with respect to $\ell_P$. By Lemma \ref{qjk-action-P*}, the induced actions of $t_iq_{jk}$ and $q_{jk}t_i$ on $\ell_P$ are given by to $t_{i'}q_{m-n,m}$ and $q_{m-n,m}t_{i'}$, respectively, where $m = |S(P) \cap [k]|$ and $n = |S(P) \cap [j,k]|-1$ and $i' = |S(P) \cap [i]|$. Further note that 
\begin{align*}
    (m-n) - i' &= |S(P) \cap [k]| - (|S(P) \cap [j, k]|-1) - |S(P) \cap [i]| \\
    &= |S(P) \cap [i+1, j-1]| + 1 \geq 2
\end{align*}
since $i+1 \in S(P)$, thus $i'+1 < m-n$. Hence, we have $t_{i'}q_{m-n,m}(\ell_P) = q_{m-n,m}t_{i'}(\ell_P)$, following directly from the assumption that $\mathcal{BK}_P$ satisfies the cactus relations.

\textit{Case 2:} Assume without loss of generality that $i \in S(P)$ and $i+1 \in S(Q)$, which configuration we simply denote as $(i, i+1)$. We consider where the labels $i, i+1$ are taken under $t_i$ and $q_{jk}$. When acting on $\ell$ by $q_{jk}t_i$, by the above observation we have $(i,i+1) \xrightarrow{t_i} (i+1,i) \xrightarrow{q_{jk}} (i+1,i)$. On the other hand, acting on $\ell$ by $t_iq_{jk}$ gives $(i, i+1) \xrightarrow{q_{jk}} (i, i+1) \xrightarrow{t_i} (i+1, i)$, which agrees with the action of $q_{jk}t_i$. A consequence is that the set of other labels are preserved by both $t_i q_{jk}$ and $q_{jk}t_i$, and since $t_i$ fixes all other labels, we deduce that $t_i$ and $q_{jk}$ commute with respect to $S(P)$ and $S(Q)$.

By a prior observation, we have $i \in (q_{jk}S)(P)$ and $i+1 \in (q_{jk}S)(Q)$. Thus, $t_i$ acts as the identity on each of $\ell_P, \ell_Q, q_{jk}(\ell_P)$, and $q_{jk}(\ell_Q)$, as swapping elements $i$ and $i+1$ does not change the relative ordering within the label sets. Then clearly $t_i$ and $q_{jk}$ commute with respect to $\ell_P$ and $\ell_Q$.

Hence, we conclude that $t_i q_{jk}(\ell) = q_{jk} t_i (\ell)$ for all $\ell \in \linext(P+Q)$, so the cactus relations hold in $\mathcal{BK}_{P + Q}$.
\end{proof}

The converse of this proposition follows directly from Proposition~\ref{prop:cactus_ideal}. This property then allows us to narrow our focus to studying connected posets whose BK groups satisfy the cactus relations.


\subsubsection{Ordinal sums}

In contrast with disjoint unions, ordinal sums of posets whose BK groups satisfy the cactus relations exhibit a more complicated behavior. In particular, the BK group of an ordinal sum of two posets whose BK groups satisfy the cactus relations may not satisfy the cactus relations.

Let $P$ be an $n$-element poset. We next study the construction $P \mapsto A_m \oplus P$, where $A_m$ is an {\it antichain} of $m$ elements, and ask whether the cactus relations are preserved in their BK groups.

\begin{prop}\label{prop:min-elt-cactus}
If the cactus relations hold in $\mathcal{BK}_P$, then they hold in $\mathcal{BK}_{A_1 \oplus P}$.
\end{prop}

\begin{proof}
    Observe that there is a one-to-one correspondence between the sets of linear extensions of $P$ and $A_1 \oplus P$: a linear extension of $A_1 \oplus P$ is simply a linear extension of $P$ (with labels shifted up by $1$) with the unique minimal element of $A_1 \oplus P$ (namely the singleton in $A_1$) labelled $1$. Thus $t_1$ is the identity. It follows that for any $3\leq i+1<j<k-1 \leq n+1$, the relation $(t_iq_{jk})^2=1$ on $\linext(A_1 \oplus P)$ reduces to a cactus relation on linear extensions of the convex induced subposet $P$ of $A_1 \oplus P$, which holds since the cactus relations hold in $\mathcal{BK}_P$. The only other cases are when $i = 1$ or $k = j+1$. When $i=1$, $(t_1q_{jk})^2 = q_{jk}^2 = 1$ for all $2<j<k\leq n+1$, since $q_{jk}$ is an involution. When $k = j+1$, observe that 
    \[
    q_{jk} = q_{k-1}q_{k-j}q_{k-1} = q_{k-1}q_1q_{k-1} = q_{k-1}t_1q_{k-1} = q_{k-1}^2 = 1,
    \]
    since $q_{k-1}$ is an involution.  Thus $q_{jk}=1$, so that $(t_iq_{jk})^2= t_i^2 = 1$ as desired.
\end{proof}

\begin{prop}
    \label{prop:1min-cac-imply-2min-cac}
    If the cactus relations hold in $\mathcal{BK}_{A_1 \oplus P}$, then they hold in $\mathcal{BK}_{A_2 \oplus P}$.
    Hence if the cactus relations hold in $\mathcal{BK}_P$, then they hold in both $\mathcal{BK}_{A_1 \oplus P}$ and $\mathcal{BK}_{A_2 \oplus P}$.
\end{prop}

\begin{proof}
    Suppose $A_2 = \{p_1\} + \{p_2\}$, and let $P_i$ be the convex induced subposet of $A_2 \oplus P$ of the form $\{p_i\} \oplus P$. 
    By assumption, the cactus relations hold in each $\mathcal{BK}_{P_i}$.
    Similar to the previous argument, observe that a linear extension of $A_2 \oplus P$ is a linear extension of $P$ (with labels shifted up by $2$) with the labels of minimal elements $p_1$ and $p_2$ drawn from $\{1,2\}$. Thus $t_2$ is the identity. For $j \ge 1$, we have
    \[\begin{array}{r l}
         q_j & = t_1(t_2t_1)\ldots(t_{j} t_{j-1} \ldots t_1) = t_1 (t_1) \ldots (t_j \ldots t_3 t_1)  \\[5pt]
         & = t_1^j (t_3)(t_4 t_3) \ldots (t_j t_{j-1} \ldots t_3) = t_1^j q'_{j}
    \end{array}  \]
    where
    \[ q'_{j} = (t_2)(t_3 t_2)(t_4 t_3 t_2) \ldots (t_j t_{j-1} \ldots t_3 t_2) = (t_3)(t_4 t_3) \ldots (t_j t_{j-1} \ldots t_3). \]
    Since $q_1 = t_1$, by definition $q'_1 = 1$. Notice that $t_1$ and $q'_j$ always commute, so for $2 < j < k \le n+2$, we have
    \[ \begin{array}{rl}
         q_{jk} & = q_{k-1}q_{k-j}q_{k-1} = (t_1^{k-1}q'_{k-1})(t_1^{k-j}q'_{k-j})(t_1^{k-1}q'_{k-j})\\[5pt]
         & = t_1^{k-j}(q'_{k-1}q'_{k-j}q'_{k-1}) =: t_1^{k-j} q'_{jk}.
    \end{array}\]
    Since each $q'_j$ commutes with $t_1$, so does $q'_{jk}$.
    
    We show that the cactus relation $(t_i q_{jk})^2 = 1$ holds by considering the following cases:
    
    {\it Case 1: $i = 2$.} We have $(t_2q_{jk})^2 = q_{jk}^2 = 1$, since $q_{jk}$ is an involution.
        
    {\it Case 2: $i = 1$.} We have $t_1q_{jk} = t_1 t_1^{k-j}q'_{jk} = t_1^{k-j}q'_{jk}t_1 = q_{jk}t_1$, so $(t_1q_{jk})^2 = 1$.
    
    {\it Case 3: $i\geq 3$ and $k-j = 1$.} Recall that $q'_{k-j} = q'_1 = 1$, so $q_{jk} = t_1 (q'_{k-1})^2 = t_1$, which obviously commutes with $t_i$ if $i \ge 3$.
    
    {\it Case 4: $i \geq 3$ and $k-j \geq 2$.} We have $(t_iq_{jk})^2 = (t_i t_1^{k-j} q'_{jk})^2 = (t_1^{k-j})^2 (t_iq'_{jk})^2 = (t_iq'_{jk})^2$, so it suffices to show that $(t_i q'_{jk})^2 = 1$ for all $i \ge 3$ and $k-j \ge 2$. This equation reduces to a cactus relation on linear extensions of either $P_1$ or $P_2$, which holds since by assumption the cactus relations hold in each $\mathcal{BK}_{P_i}$.
    
    The second part follows immediately from Proposition~\ref{prop:min-elt-cactus} and the first part.
\end{proof}

Seeing Propositions~\ref{prop:min-elt-cactus} and \ref{prop:1min-cac-imply-2min-cac}, one might think that given any poset $P$ for which the cactus relations hold in $\mathcal{BK}_P$, they continue to hold in $\mathcal{BK}_{A_m \oplus P}$ for all $m \ge 1$.
Unfortunately, this is not the case:

\begin{prop} \label{prop:ord-sum-not-cactus}
    For any non-empty finite poset $P$, some cactus relations do not hold in $\mathcal{BK}_{A_m \oplus P}$ for $m \ge 3$.
\end{prop}

\begin{proof}
    Observe that $A_m \oplus P$ always contains a copy of $A_m \oplus A_1$ as an order ideal. 
    Thus, by Proposition~\ref{prop:cactus_ideal}, it suffices to show that some cactus relations do not hold in $\mathcal{BK}_{A_m \oplus A_1}$ for all $m \ge 3$. 
    When $m=3$, one can check that the cactus relation $(t_1q_{34})^2=1$ does not hold. 
    When $m \geq 4$, we claim that the cactus relation $(t_1q_{m-1,m+1})^2 = 1$ does not hold. By studying the evacuation $q_j$ on linear extensions of the poset $A_m \oplus A_1$ (see, e.g., \cite{Stanley-promotion-evacuation}), we observe that $q_{m-1,m+1}$ swaps the labels $1$ and $m-1$, while $t_1$ swaps the labels $1$ and $2$. These descriptions show that they do not commute.
\end{proof}

While taking the ordinal sum with sufficiently large antichains creates a source of posets whose BK groups do not satisfy the cactus relations, the same operation with sufficiently long \textit{chains} appears to have the exact opposite effect. Let $C_m$ denote a chain of $m$ elements.

\begin{prop}\label{prop:oplus_chain}
    For any $n$-element poset $P$ and for any $m \ge n-3$, the cactus relations hold in $\mathcal{BK}_{C_m \oplus P}$.
\end{prop}

\begin{proof}
    By Proposition~\ref{prop:min-elt-cactus}, it suffices to show that the cactus relations hold in $\mathcal{BK}_{C_{n-3} \oplus P}$. 
    The key observation is that $t_i$ is the identity for all $i \le n-3$, since the element of $P$ labelled by $i$ lies inside the lower chain and thus is comparable to that labelled by $i+1$. Consider the operators $t_i$ and $q_{jk} = q_{k-1}q_{k-j}q_{k-1}$ for $2 \le i+1 < j < k \le 2n-3$. If $i \le n-3$, $t_i=1$ obviously commutes with $q_{jk}$. If $i \geq n-2$, then $j\geq n$, so $k-j \leq (2n-3) - n = n-3$. It follows that $q_{k-j} = t_1(t_2t_1)\ldots(t_{k-j} t_{k-j-1} \ldots t_1) = 1$, therefore $q_{jk} = q_{k-1}^2 = 1$ which obviously commutes with $t_i$.
\end{proof}


\subsection{Minuscule, d-complete, and jeu-de-taquin posets}\label{subsec:cactus_jdt}

Starting with the singleton poset $A_1$, by iterating the constructs of disjoint union and ordinal sum $P \mapsto A_1 \oplus P$, we produce the family of all {\it rooted forest posets}--disjoint unions of trees with a unique minimal element, whose BK groups satisfy the cactus relations by Theorem~\ref{thm:cactus_disj} and Proposition~\ref{prop:min-elt-cactus}. 
Knuth observed a famous hook-length formula for counting the linear extensions of such forest posets \cite{knuth-hook-length}. The original motivation for this finding was the family of Ferrers posets for which the Frame--Robinson--Thrall hook-length formula counts their linear extensions \cite{frt-hook-length}, another family of posets whose BK groups satisfy the cactus relations. Motivated by these two families with hook formulas, Proctor introduced the family of {\it d-complete posets} having such a hook-length formula for their linear extensions; we refer the readers to \cite{ProctorScoppetta} and \cite{KimYoo} for the precise definitions and hook-length formulas. This raises the question of whether the BK groups of d-complete posets satisfy the cactus relations.

Preliminary investigations point to an affirmative answer. Besides the above families, it is not too hard to show that the BK groups of all {\it shifted Ferrers posets}, another motivating subfamily of d-complete posets, satisfy the cactus relations.

\begin{thm}
    The cactus relations hold in $\bkp$ for all shifted Ferrers posets $P$.
\end{thm}

\begin{proof}
   First, we embed standard shifted tableaux, which are in bijection with linear extensions of shifted Ferrers posets, by ``doubling'' them into {\it shift-symmetric} column-strict tableaux \cite{SaganShifted}; see Example~\ref{ex:shift_symm_tabl}. Since the content of a standard shifted tableau strictly increases across the column and row, the classical BK moves $t_i$ act the same on the resulting shift-symmetric column-strict tableau restricting to the embedded standard shifted part as on the standard shifted tableau itself. That is, the action of $t_i$ on the standard shifted tableau commute with the action of $t_i$ on the shift-symmetric tableau, where BK moves satisfy the cactus relations. Therefore, the cactus relations hold in $\mathcal{BK}_P$ for posets $P$ of shifted Young diagram shape, concluding the proof.
\end{proof}

\begin{example}\label{ex:shift_symm_tabl}
Below is an example construction of a shift-symmetric column-strict tableau from a standard shifted tableau.
     \[ S = \;
        \begin{ytableau} 
            1 & 2 & 3 & 4\\ 
            \none & 5 & 6 & 7 \\
            \none & \none & 8 & 9
            \end{ytableau} \; \quad \Rightarrow \quad
     T = \;
        \begin{ytableau} 
            *(yellow) 1 & 1 & 2 & 3 & 4\\ 
            *(yellow) 2 & *(yellow) 5 & 5 & 6 & 7 \\
            *(yellow) 3 & *(yellow) 6 & *(yellow) 8 & 8 & 9 \\
            *(yellow) 4 & *(yellow) 7 & *(yellow) 9 & \none & \none
            \end{ytableau} \; .
    \]
\end{example}

Another closely related family of posets is the \textit{minuscule posets}, which first appeared in Lie theory.
Proctor's d-complete posets contain all order ideals within minuscule posets as a motivating special case. Minuscule posets are classified into three infinite families and two exceptional cases (see, e.g., \cite{stembridge,rush-shi}). The infinite families of minuscule posets are the rectangular Ferrers posets (whose order ideals contain all Ferrers posets), the triangular shifted Ferrers posets (whose order ideals contain all shifted Ferrers posets), and ordinal sums of antichains $A_1^{\oplus n} \oplus A_2 \oplus A_1^{\oplus n}$; 
all of whose BK groups satisfy the cactus relations based on our results so far. By checking the cactus relations in the BK groups of the remaining exceptional minuscule posets using {\it SageMath}, we have verified the following:

\begin{thm}
    The cactus relations hold in $\bkp$ for all minuscule posets $P$.
\end{thm}

Based on the existing evidence, we make the following conjecture.

\begin{conjecture}\label{conj:d-complete-cactus}
    The cactus relations hold in $\bkp$ for all d-complete posets $P$.
\end{conjecture}

This has been verified  by {\it SageMath} for all d-complete posets of size at most $9$. 

\begin{remark}
    In \cite{proctor2009d}, Proctor also studied a large family of posets called \textit{jeu-de-taquin posets}, which he showed includes all d-complete posets. Informally, jeu-de-taquin posets are posets for which Sch\"utzenberger's jeu-de-taquin on linear extensions of posets, as discussed in \cite{Stanley-promotion-evacuation}, has an extra confluence property. 
    
    For such posets, we could show that $q_{jk}$ only permutes the labels $j,j+1,\dots,k$ when acting on a linear extension. Thus, it is more probable for $t_i$ and $q_{jk}$ to commute because they affect disjoint subsets of the labels. Hence, one might hope that the BK groups of jeu-de-taquin posets satisfy the cactus relations. However, this is not true; Figure~\ref{fig: min-counterexample} shows a counterexample, where the cactus relation $(t_3q_{59})^2 = 1$ does not hold for this linear extension whose underlying poset is jeu-de-taquin.

    \begin{figure}[!ht]
        \centering

            \resizebox{0.3\textwidth}{!}{
                \begin{tikzpicture}
                
                \draw[black, thick] (-1,0.5) -- (-1,2);
                \draw[black, thick] (-1,2) -- (1,3);
                \draw[black, thick] (1,7.5) -- (1,3);
                \draw[black, thick] (-1,2) -- (-3,3.75);
                \draw[black, thick] (-1,6) -- (-3,3.75);
                \draw[black, thick] (1,4.5) -- (-1,6);
                \draw[black, thick] (1,4.5) -- (3,6);
                \draw[black, thick] (1,7.5) -- (-1,6);
                \draw[black, thick] (1,7.5) -- (3,6);
                
                \filldraw[black] (1,7.5) circle (2pt);
                \filldraw[black] (1,6) circle (2pt);
                \filldraw[black] (1,4.5) circle (2pt);
                \filldraw[black] (1,3) circle (2pt);
                \filldraw[black] (-1,2) circle (2pt);
                \filldraw[black] (-1,0.5) circle (2pt);
                \filldraw[black] (-1,6) circle (2pt);
                \filldraw[black] (3,6) circle (2pt);
                \filldraw[black] (-3,3.75) circle (2pt);
                
                \draw (-1,0) node[anchor=center] {\huge $1$};
                \draw (-1,2.5) node[anchor=center] {\huge $2$};
                \draw (1,2.5) node[anchor=center] {\huge $3$};
                \draw (-3,3.25) node[anchor=center] {\huge $4$};
                \draw (1.25,4.25) node[anchor=center] {\huge $5$};
                \draw (3.5,6) node[anchor=center] {\huge $6$};
                \draw (-1.25,6.25) node[anchor=center] {\huge $7$};
                \draw (1.5,6) node[anchor=center] {\huge $8$};
                \draw (1,8) node[anchor=center] {\huge $9$};
                
                \end{tikzpicture}
            }
        \caption{The smallest jeu-de-taquin poset whose BK group does not satisfy the cactus relations.}
        \label{fig: min-counterexample}
    \end{figure}
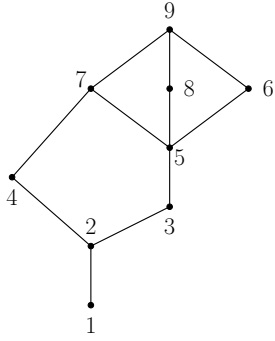
\end{remark}


\section{Posets with the full symmetric BK group}\label{sec:symm}

In this section, we will study the group $\bkp$ as a subgroup of the symmetric group on the set $\linext(P)$. In particular, we focus on understanding posets $P$ for which $\bkp = \symm_{\linext(P)}$.


\subsection{Duals and ordinal sums}\label{subsec:symm_prop}

As in Section~\ref{subsec:cactus}, we explore properties of posets whose BK groups equal the symmetric group on the set of their linear extensions, particularly their behavior under poset operations such as taking duals and ordinal sums. Let $P$ and $Q$ be finite posets, and let $P^*$ denote the {\it dual} or {\it opposite} poset to $P$.

\begin{prop}\label{prop:BKP_dual}
One has a group isomorphism
    $\bkp \cong \mathcal{BK}_{P^*}$. 
    
Hence $\mathcal{BK}_P \cong \symm_{\linext(P)}$ if and only if $\mathcal{BK}_{P^*} \cong \symm_{\linext(P^*)}$.
\end{prop}

\begin{proof}
One has a bijection $\linext(P) \to \linext(P^*)$ sending $\ell \mapsto \ell'$ where $\ell'(v) = |P|+1-\ell(v)$ for all $v \in P$. This induces an isomorphism of symmetric groups $\symm_{\linext(P)} \rightarrow \symm_{\linext(P^*)}$, which then  restricts to an isomorphism 
$\bkp \to \mathcal{BK}_{P^*}$ sending $t_i \mapsto t_{|P|-i}$.
\end{proof}

Likewise, the BK group of the ordinal sum of two posets has a very nice description in terms of the BK groups of its components.

\begin{prop}\label{prop:BKP_osum}
    $\mathcal{BK}_{P \oplus Q} \cong \bkp \times \bkq$.
\end{prop}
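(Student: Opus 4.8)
The plan is to exhibit an explicit group isomorphism $\mathcal{BK}_{P\oplus Q}\to\bkp\times\bkq$ arising from a natural bijection on linear extensions. First I would observe that if $|P|=m$, then every linear extension $\ell$ of $P\oplus Q$ satisfies $\ell^{-1}(\{1,\dots,m\})=P$ and $\ell^{-1}(\{m+1,\dots,m+n\})=Q$, since every element of $P$ lies below every element of $Q$. Hence restricting $\ell$ to $P$ and to $Q$ (shifting the latter's labels down by $m$) gives a bijection $\linext(P\oplus Q)\xrightarrow{\sim}\linext(P)\times\linext(Q)$, which in turn induces an embedding $\symm_{\linext(P\oplus Q)}\hookrightarrow\symm_{\linext(P)}\times\symm_{\linext(Q)}$ of symmetric groups (it is not an isomorphism of symmetric groups, but we only need it to carry $\mathcal{BK}_{P\oplus Q}$ to the subgroup $\bkp\times\bkq$).

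The core computation is to track how each generator $t_i$ of $\mathcal{BK}_{P\oplus Q}$ acts under this bijection. I would split into three cases according to where the labels $i,i+1$ land. If $i<m$ (so both $i$ and $i+1$ label elements of $P$), then $t_i$ on $\linext(P\oplus Q)$ corresponds to $(t_i,\mathrm{id})$ acting on $\linext(P)\times\linext(Q)$, because the elements labelled $i,i+1$ are incomparable in $P\oplus Q$ iff they are incomparable in $P$, and the $Q$-part is untouched. Symmetrically, if $i>m$, then $t_i$ corresponds to $(\mathrm{id},t_{i-m})$. Finally, if $i=m$, then $\ell^{-1}(m)\in P$ and $\ell^{-1}(m+1)\in Q$, and since everything in $P$ is below everything in $Q$, these two elements are comparable, so $t_m$ is the identity on $\linext(P\oplus Q)$ — consistent with Proposition~\ref{prop:rel_ti=1} (here $P\oplus Q$ is literally an ordinal sum with first part of size $m$). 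Thus under the induced map the generating set $\{t_1,\dots,t_{m+n-1}\}$ of $\mathcal{BK}_{P\oplus Q}$ maps onto $\{(t_1,\mathrm{id}),\dots,(t_{m-1},\mathrm{id})\}\cup\{(\mathrm{id},\mathrm{id})\}\cup\{(\mathrm{id},t_1),\dots,(\mathrm{id},t_{n-1})\}$, which generates exactly $\bkp\times\bkq$.

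To conclude, I would note that the induced homomorphism $\symm_{\linext(P\oplus Q)}\to\symm_{\linext(P)}\times\symm_{\linext(Q)}$ is injective (it comes from a bijection of the underlying sets), so its restriction to $\mathcal{BK}_{P\oplus Q}$ is an injective homomorphism onto the subgroup generated by the images of the generators, namely $\bkp\times\bkq$. Hence $\mathcal{BK}_{P\oplus Q}\cong\bkp\times\bkq$.

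The step I expect to require the most care is not conceptual but bookkeeping: verifying in the case $i<m$ that the "$i,i+1$ incomparable in $P\oplus Q$" condition restricts correctly to $P$, and confirming that the label-shift on the $Q$-component is compatible with the $i>m$ case — in particular that no generator mixes the two factors. The $i=m$ case is the only genuinely asymmetric point and deserves an explicit sentence, but it is immediate from the ordinal-sum structure. One should also make sure the claimed map really is a group homomorphism, which follows formally once it is realized as the restriction of the symmetric-group map induced by a set bijection, so no separate check is needed there.
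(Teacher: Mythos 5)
Your proposal is correct and follows essentially the same route as the paper: the bijection $\linext(P\oplus Q)\to\linext(P)\times\linext(Q)$ obtained by restricting and shifting labels, the three-case analysis of the generators ($t_i\mapsto(t_i,1)$ for $i<m$, $t_m\mapsto(1,1)$, $t_i\mapsto(1,t_{i-m})$ for $i>m$), and the conclusion that the induced map is an isomorphism of permutation groups onto $\bkp\times\bkq$. One sentence is misstated, though it does not affect the argument: the set bijection does \emph{not} induce an embedding $\symm_{\linext(P\oplus Q)}\hookrightarrow\symm_{\linext(P)}\times\symm_{\linext(Q)}$ (compare $(ab)!$ with $a!\,b!$); what it gives is an isomorphism $\symm_{\linext(P\oplus Q)}\cong\symm_{\linext(P)\times\linext(Q)}$, and your case analysis shows that the images of the generators $t_i$ lie in the coordinate-wise subgroup $\symm_{\linext(P)}\times\symm_{\linext(Q)}$, so $\mathcal{BK}_{P\oplus Q}$ is carried isomorphically onto the subgroup generated by those images, namely $\bkp\times\bkq$.
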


\begin{proof}
    Let
    \begin{align*}
        \lambda : \linext(P \oplus Q) &\to \linext(P) \times \linext(Q) \\
        \ell &\mapsto (\ell_1, \ell_2),
    \end{align*}
    where $\ell_1$ is $\ell$ restricted to $P$ and $\ell_2$ is $\ell - |P|$ restricted to $Q$. We have that $\lambda$ is a bijection. Now let
    \begin{align*}
        \psi : \mathcal{BK}_{P \oplus Q} &\to \bkp \times \bkq \\
        t_i &\mapsto t'_i,
    \end{align*}
    where
    \[
        t'_i = \begin{cases}
            (t_i, 1) &\quad\text{if} \ i \le |P|-1 \\
            (1, 1) &\quad\text{if} \ i = |P| \\
            (1, t_{i-|P|}) &\quad\text{if} \ i \ge |P|+1.
        \end{cases}
    \]
    We have that $\psi$ is an isomorphism of permutation groups as desired.
\end{proof}

This property leads to some useful tools for constructing and identifying posets with the full symmetric BK groups that arise from ordinal sums.

\begin{cor}\label{cor:symm_min_max}
    Let $P$ be a finite poset.
    Then $\mathcal{BK}_{P} \cong \symm_{\linext(P)}$ if and only if $\mathcal{BK}_{C_a \oplus P \oplus C_b} \cong \symm_{\linext(C_a \oplus P \oplus C_b)}$ for any chains $C_a$ and $C_b$.
\end{cor}

\begin{proof}
This corollary follows directly from the previous proposition and the fact that for any chain $C_a$, the set $\linext(C_a)$ is a singleton, so its BK group is trivial.
\end{proof}

\begin{cor}\label{cor:symm_osum}
    If $P$ and $Q$ are posets such that $\mathcal{BK}_{P \oplus Q} \cong \symm_{\linext(P \oplus Q)}$, then at least one of $P$ or $Q$ is a chain.
    As a consequence, $\mathcal{BK}_P \cong \symm_{\linext(P)}$ and $\mathcal{BK}_Q \cong \symm_{\linext(Q)}$.
\end{cor}

\begin{proof}
    By assumption, $\mathcal{BK}_{P \oplus Q}$ is a symmetric group. 
    By Proposition~\ref{prop:BKP_osum},  $\mathcal{BK}_{P \oplus Q}$ can be written as a direct product of the ordinal summands' BK groups, i.e., $\mathcal{BK}_{P \oplus Q} ~\cong~ \bkp \times \bkq$. Then at least one of the factors $\bkp$ and $\bkq$ is the trivial group. This happens when at least one of the posets $P$ and $Q$ has exactly one linear extension and hence is a chain. The second part follows immediately from the previous corollary.
\end{proof}

Corollaries~\ref{cor:symm_min_max} and ~\ref{cor:symm_osum} are especially useful in identifying posets whose BK groups are the full symmetric group, as they allow us to turn our attention to posets with neither a unique maximal nor minimal element.


\subsection{Primitive BK groups and disjoint unions of posets}\label{subsec:BKP_prim}

The goal of this subsection is to prove a classification theorem for disconnected posets with the full symmetric BK groups. En route to this result, we will first study a generalization of these posets. Recall that the action of a group $G$ on a set $X$ is \emph{primitive} if there is no partition of $X$ that is preserved by $G$ other than the trivial partitions (i.e., the finest and coarsest partitions). Evidently, $\symm_{\linext(P)}$ is a primitive permutation group of $\linext(P)$. In this subsection, we turn our attention to posets whose BK groups are primitive.

\begin{lemma}\label{lem:BKP_disj_chains}
    If an $n$-element poset $P$ is a disjoint union of two or more non-empty chains, then $\bkp \cong \symm_n$.
\end{lemma}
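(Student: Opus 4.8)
The plan is to show that when $P$ is a disjoint union of chains, the BK moves $t_i$ act on linear extensions exactly like the adjacent transpositions $s_i = (i\ i{+}1)$ generating $\symm_n$ acting on a word (a sequence of ``colors''), and that this gives a faithful action, so the generated group is a homomorphic image of $\symm_n$ that is in fact all of $\symm_n$.

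First I would set up the encoding. Suppose $P = C_{\lambda_1} + \cdots + C_{\lambda_r}$ is a disjoint union of chains, where $C_{\lambda_j}$ is a chain with $\lambda_j$ elements and $\sum_j \lambda_j = n$. A linear extension $\ell$ of $P$ is determined precisely by a word $w = w_1 w_2 \cdots w_n$ over the alphabet $\{1, \dots, r\}$ in which the letter $j$ appears exactly $\lambda_j$ times: reading the word left to right, the $k$-th occurrence of letter $j$ tells us that the $k$-th element (from the bottom) of chain $C_{\lambda_j}$ receives the corresponding label. Thus $\linext(P)$ is in bijection with the set of such rearrangement-words, equivalently with the cosets $\symm_n / (\symm_{\lambda_1} \times \cdots \times \symm_{\lambda_r})$.

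Next I would translate the BK move into this language. The element $\ell^{-1}(i)$ and $\ell^{-1}(i{+}1)$ lie in the same chain if and only if they are comparable in $P$, which happens exactly when $w_i = w_{i+1}$; in that case $t_i$ fixes $\ell$. When $w_i \ne w_{i+1}$ the two positions belong to different chains, hence are incomparable, and $t_i$ swaps labels $i$ and $i{+}1$, which is precisely swapping the letters $w_i$ and $w_{i+1}$. So in every case $t_i$ acts on $w$ by the adjacent transposition $s_i$ of positions $i$ and $i{+}1$ (trivially when the letters agree). This is exactly the natural (left) place-permutation action of $\symm_n$ on words, restricted to our set of rearrangement-words. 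Since the symmetric group acts transitively on all rearrangements of a fixed multiset of letters, and $\bkp$ is the image of $\symm_n$ under this action, we get a surjection $\symm_n \twoheadrightarrow \bkp$.

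Finally I would argue the map is injective, giving $\bkp \cong \symm_n$. The cleanest way is to exhibit one linear extension with trivial stabilizer under the place-permutation action: take the linear extension corresponding to the word $w = 1\,2\,3\,\cdots\, r$ repeated appropriately — but more directly, one can use any $\ell$ whose word has all $n$ letters distinct if $P$ is itself a chain-disjoint-union where every chain is a singleton (an antichain $A_n$), in which case the action is the regular action of $\symm_n$ and faithfulness is immediate. In general, to see injectivity I would note that a nontrivial $\pi \in \symm_n$ acts nontrivially on \emph{some} rearrangement word: indeed $\symm_n$ acting on the set of \emph{all} words of length $n$ over $\{1,\dots,r\}$ (with $r \ge 2$, and not all chains singletons) is faithful because a permutation moving position $i$ to position $i' \ne i$ can be detected by a word whose $i$-th and $i'$-th letters differ, and the relevant word can be chosen to have the prescribed content since each chain length is fixed. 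The only subtle point to verify — and the step I expect to be the main obstacle — is checking that whenever $P$ is a disjoint union of \emph{two or more} chains (so $r \ge 2$), the multiset content $(\lambda_1, \dots, \lambda_r)$ admits enough freedom that the place-permutation action of $\symm_n$ on the single content class is already faithful; this fails only when there is essentially no freedom, i.e.\ when $r = 1$ (a single chain, excluded by hypothesis). I would handle this by showing that for $r \ge 2$ one can always find, for each nonidentity $\pi$, a rearrangement word in $\linext(P)$ that $\pi$ moves, completing the proof that $\bkp \cong \symm_n$.
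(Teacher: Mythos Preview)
Your approach is correct and genuinely different from the paper's. The paper, having already established in Proposition~\ref{prop:rel_braid} that the braid relations hold for disjoint unions of chains, simply observes that $\bkp$ is a quotient of $\symm_n$ and then invokes the classification of proper quotients of $\symm_n$ to reduce to checking that $\bkp$ is not trivial, not $C_2$ for $n\ge 3$, and not $\symm_3$ for $n=4$. Your route instead proves faithfulness of the coset action of $\symm_n$ on $\symm_n/(\symm_{\lambda_1}\times\cdots\times\symm_{\lambda_r})$ directly via the word encoding, which is more elementary (no appeal to simplicity of $A_n$) and self-contained (it re-derives the braid relations rather than citing them). The faithfulness step you flag as the main obstacle is indeed the crux, and your sketch works: for any nonidentity $\pi$ with $\pi(a)=b\ne a$, since $r\ge 2$ one can choose a word $w$ of content $\lambda$ with $w_a=1$ and $w_b=2$, whence $(\pi\cdot w)_b=w_a\ne w_b$. (The parenthetical ``and not all chains singletons'' is a slip---the antichain case is the easiest, as you note separately---but harmless.) The paper's argument is slicker once Proposition~\ref{prop:rel_braid} is in hand; yours is more concrete and stands on its own.
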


\begin{proof}
    By Proposition~\ref{prop:rel_braid}, $\bkp$ is isomorphic to a quotient of $\symm_n$. Thus, it suffices to show that the group is not trivial, is not $C_2$ for $n \ge 3$, and is not $\symm_3$ for $n=4$. Each of these is easily checked.
\end{proof}

\begin{lemma} \label{lem-stab-chains}
    Let $P = C_{n_1} + C_{n_2} + \dots + C_{n_r}$. Then, $\Stab(\ell) \cong \symm_{n_1} \times \symm_{n_2} \times \dots \times \symm_{n_r}$ for any $\ell \in \linext(P)$.
\end{lemma}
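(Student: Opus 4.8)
The plan is to translate everything into words and reduce the computation to the standard description of point stabilizers in the action of $\symm_n$ on multiset permutations. Write $n = n_1 + \dots + n_r$ (so $r \ge 2$, as $P$ is a disjoint union of at least two chains). To a linear extension $\ell \in \linext(P)$ I would associate the word $w(\ell) \in \{1, \dots, r\}^n$ whose $k$-th letter records the index of the chain component of $P$ containing $\ell^{-1}(k)$. Since within each chain the elements must receive labels in increasing order, $\ell$ is recovered from $w(\ell)$; thus $\ell \mapsto w(\ell)$ is a bijection from $\linext(P)$ onto the set $W$ of words of length $n$ containing the letter $i$ exactly $n_i$ times.

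Next I would identify the $\bkp$-action under this bijection. The elements $\ell^{-1}(j)$ and $\ell^{-1}(j+1)$ are comparable in $P$ precisely when they lie in a common chain component, i.e.\ precisely when the $j$-th and $(j+1)$-st letters of $w(\ell)$ coincide; when they are incomparable, $t_j$ transposes them, which swaps these two letters of the word, and when they coincide that swap changes nothing --- matching the fact that $t_j$ then fixes $\ell$. Hence $w(t_j\ell)$ is obtained from $w(\ell)$ by transposing the entries in positions $j$ and $j+1$, for every $\ell$. Now by Proposition~\ref{prop:rel_braid} there is a surjection $\symm_n \to \bkp$ sending $\sigma_i \mapsto t_i$, which is an isomorphism by Lemma~\ref{lem:BKP_disj_chains}; and the previous sentence shows that under this isomorphism together with the bijection $\linext(P) \cong W$, the action of $\bkp$ on $\linext(P)$ becomes the natural action of $\symm_n$ on $W$ by permuting positions.

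It then remains to compute a point stabilizer for that action. Fix $\ell$, put $w = w(\ell)$, and for each $i$ let $B_i = \{k \in [n] : w_k = i\}$, so that $[n] = B_1 \sqcup \dots \sqcup B_r$ with $|B_i| = n_i$. A permutation $\pi \in \symm_n$ fixes the word $w$ if and only if it permutes each block $B_i$ within itself, i.e.\ if and only if $\pi$ lies in the Young subgroup $\symm_{B_1} \times \dots \times \symm_{B_r} \cong \symm_{n_1} \times \dots \times \symm_{n_r}$. Transporting this back through the isomorphism $\symm_n \cong \bkp$ gives $\Stab(\ell) \cong \symm_{n_1} \times \dots \times \symm_{n_r}$, as claimed. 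The only step genuinely needing care is the second one --- verifying that the $\bkp$-action really is the permutation-of-positions action on $W$; the rest is bookkeeping. (As a sanity check, orbit--stabilizer together with $|\linext(P)| = \binom{n}{n_1,\dots,n_r}$ and $|\bkp| = n!$ already forces $|\Stab(\ell)| = n_1!\cdots n_r!$, consistent with the answer.)
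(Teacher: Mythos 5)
Your proof is correct, but it takes a somewhat different route from the paper's. The paper argues as follows: since $\bkp$ acts transitively, all point stabilizers are conjugate, so it suffices to treat the single linear extension $\ell$ labelling the chains consecutively; the generators $t_i$ fixing this $\ell$ are exactly those with $i \neq n_1, n_1+n_2, \dots$, so under the isomorphism $\bkp \cong \symm_n$ of Lemma~\ref{lem:BKP_disj_chains} they generate a Young subgroup $\symm_{n_1} \times \dots \times \symm_{n_r}$ inside $\Stab(\ell)$; equality is then forced by orbit--stabilizer counting, since $|\linext(P)| = n!/(n_1!\cdots n_r!)$ and $|\bkp| = n!$. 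You instead encode linear extensions as multiset words, verify that under $\symm_n \cong \bkp$ the action becomes the standard position-permuting action on these words, and read off the stabilizer of an arbitrary $\ell$ directly as the Young subgroup preserving the letter blocks---no counting argument and no reduction to a distinguished $\ell$. The one step you rightly flag, that the generator-by-generator match extends to the whole action, is fine because two homomorphisms out of $\symm_n$ agreeing on the Coxeter generators agree everywhere (and your appeal to Proposition~\ref{prop:rel_braid} plus Lemma~\ref{lem:BKP_disj_chains} for the isomorphism is legitimate, as both precede the lemma). What your approach buys is slightly more than the lemma states: an explicit identification of the $\bkp$-action on $\linext(C_{n_1}+\dots+C_{n_r})$ with the $\symm_n$-action on cosets of a Young subgroup, which is the sanity check you mention made structural; what the paper's approach buys is brevity, at the cost of the counting step. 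Both arguments rest on Lemma~\ref{lem:BKP_disj_chains} and hence implicitly assume $r \ge 2$, as you note, which matches how the paper uses the lemma.
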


\begin{proof}
    We have $\Stab(\ell_1) \cong \Stab(\ell_2)$ for any $\ell_1, \ell_2 \in \linext(P)$, so it suffices to consider only one linear extension. Let $\ell$ be the linear extension that assigns consecutive labels to each chain beginning with $C_{n_1}$. Then, $t_i \in \Stab(\ell)$ if and only if $i \neq \sum_{j=1}^k n_j$ for all $1 \le k \le r-1$. Recall that by Lemma~\ref{lem:BKP_disj_chains}, $t_1, \dots, t_{n-1}$ yield a presentation for the symmetric group $\symm_n$, where
    \[
        n = \sum_{j=1}^r n_j.
    \]
    Thus, the group generated by $\{t_i \mid i \neq \sum_{j=1}^k n_j \ \text{for all} \ 1 \le k \le r-1\}$ is isomorphic to $\symm_{n_1} \times \symm_{n_2} \times \dots \times \symm_{n_r}$. It remains to show that $\symm_{n_1} \times \symm_{n_2} \times \dots \times \symm_{n_r}$ exhausts $\Stab(\ell)$, which follows from the following calculation:
    \begin{align*}
        |\linext(P)| \cdot |\Stab(\ell)| &= |\bkp| \\
        \frac{n!}{n_1! \cdots n_r!} \cdot |\Stab(\ell)| &= n! \\
        |\Stab(\ell)| &= n_1! \cdots n_r! = |\symm_{n_1} \times \symm_{n_2} \times \dots \times \symm_{n_r}|.
    \end{align*}
\end{proof}

\begin{lemma}\label{lemma:not-chains-not-prim}
    If $P = P_1 + P_2$, where $|P_1|, |P_2| \ge 1$ and $P_1$ is not a chain, then $\mathcal{BK}_P$ is not a primitive permutation group.
\end{lemma}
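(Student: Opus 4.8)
The plan is to show that $P = P_1 + P_2$ with $P_1$ not a chain admits a nontrivial $\bkp$-invariant partition of $\linext(P)$, which by definition means $P$ is not LE-primitive. The natural candidate for such a partition comes from the data that $\bkp$ cannot fully scramble: namely, the way the labels $[n]$ are distributed between the two components $P_1$ and $P_2$, together with the internal linear extensions of the two components. I would use the injection $T$ from Definition~\ref{defn:part_disj_union}, writing $\ell \mapsto (\ell_{P_1}, \ell_{P_2}, S(P_1), S(P_2))$, and build the partition blocks out of the orbits of some coarse invariant of this tuple.

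The key observation is that since $P_1$ is not a chain, $\linext(P_1)$ has more than one element, so $\symm_{\linext(P_1)}$ is a proper quotient situation: I want to find a $\bkp$-invariant equivalence relation on $\linext(P)$ that is strictly coarser than equality but strictly finer than the all-of-$\linext(P)$ relation. Concretely, I would try defining $\ell \sim \ell'$ if $\ell_{P_2} = \ell'_{P_2}$, $S(P_1) = S'(P_1)$ (equivalently $S(P_2) = S'(P_2)$), and $\ell_{P_1}$, $\ell'_{P_1}$ lie in the same block of some fixed $\bkp$-relevant partition of $\linext(P_1)$ — for instance, just $\ell_{P_1} = \ell'_{P_1}$ is too fine (it gives equality overall only if we also fix everything), while ignoring $\ell_{P_1}$ entirely is a cleaner choice: declare $\ell \sim \ell'$ iff $S(P_1) = S'(P_1)$ and $\ell_{P_2} = \ell'_{P_2}$. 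Using the explicit description in Definition~\ref{defn:part_disj_union} of how each $t_i$ acts on the tuple (it either swaps an element of $S(P_1)$ with one of $S(P_2)$, leaving $\ell_{P_1},\ell_{P_2}$ fixed, or it acts only within $\ell_{P_1}$ leaving $S(P_1), S(P_2), \ell_{P_2}$ fixed, or symmetrically within $\ell_{P_2}$), I would check that each generator $t_i$ sends $\sim$-classes to $\sim$-classes: the first two cases preserve $(\ell_{P_1}\text{-class},\ell_{P_2})$ appropriately — wait, the first case changes $S(P_1)$ but does so identically on all elements of a block — so I need to be careful and may instead have to also remember $\ell_{P_1}$ up to the $\symm$-action, or switch the roles of $P_1$ and $P_2$ in the definition so that the block only depends on $\ell_{P_1}$ and the pair $(S(P_1),S(P_2))$. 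Either way, the verification is a finite case analysis against the three types of $t_i$-action.

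After establishing invariance, I must confirm the partition is nontrivial in both directions. It is not the finest partition: since $P_1$ is not a chain, pick two distinct $\ell_{P_1}, \ell'_{P_1} \in \linext(P_1)$ and extend each to a linear extension of $P$ using the same label set $S(P_1)$ and the same $\ell_{P_2}$; these give two distinct elements of $\linext(P)$ in the same block (assuming I have arranged the block to ignore $\ell_{P_1}$). It is not the coarsest partition: since $|P_2| \ge 1$ there are at least two choices of $S(P_1) \in \binom{[n]}{|P_1|}$ (as $|P_1| < n$), and any two linear extensions with different $S(P_1)$ lie in different blocks; alternatively if $P_2$ is also not a chain use two values of $\ell_{P_2}$. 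So the partition has at least two blocks and at least one block has at least two elements, hence it is nontrivial.

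The main obstacle I anticipate is getting the definition of the invariant partition exactly right so that it is simultaneously $\bkp$-invariant and strictly coarser-than-trivial: the naive choice (fix $S(P_1)$ and $\ell_{P_2}$, forget $\ell_{P_1}$) may fail invariance under the "swap a label between $S(P_1)$ and $S(P_2)$" moves because such a move can change $\ell_{P_2}$'s \emph{label set} while the relative order is preserved — actually $\ell_{P_2}$ as an abstract linear extension of $P_2$ is unchanged by such swaps, so this should be fine, but I will need to double-check the boundary interactions carefully. If that choice does break, the fallback is to take the partition whose blocks are the orbits of the subgroup of $\bkp$ fixing $S(P_1)$ setwise, or to invoke a block-system argument: find a single nontrivial block containing a fixed $\ell$ and show its $\bkp$-translates partition $\linext(P)$. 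I expect the clean statement to be: the blocks are indexed by $\linext(P_2) \times \binom{[n]}{|P_1|}$ (or by $\linext(P_1) \times \binom{[n]}{|P_1|}$, choosing whichever component is forced to be a chain-like passive partner), and I would present the proof in that form once the case analysis confirms which version is invariant.
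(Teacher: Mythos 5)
Your proposal is correct and takes essentially the same approach as the paper: the paper's proof simply partitions $\linext(P)$ by the label set $\ell(P_1)$ alone (your partition by the pair $(S(P_1),\ell_{P_2})$ is a refinement that is invariant for exactly the reason you identified, namely that a cross-component swap changes $S(P_1)$ in the same way for every element of a block while leaving $\ell_{P_1},\ell_{P_2}$ untouched), checks that each $t_i$ permutes the blocks, and gets nontriviality from $P_1$ not being a chain and $|P_2|\ge 1$. The only nit is that for ``at least two blocks'' you should exhibit two linear extensions realizing different label sets $S(P_1)$ (e.g.\ label all of $P_1$ first, or all of $P_2$ first), not merely note that $\binom{[n]}{|P_1|}$ has more than one element.
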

\begin{proof}
    Given $\ell \in \linext(P)$, assign $\ell$ the set $\ell(P_1)$. This induces a partition on $\linext(P)$. The conditions of the lemma imply that the partition is nontrivial. It is easy to check that each $t_i$ preserves the partition.
\end{proof}

\begin{thm} \label{thm:prim-disconn}
    If $P$ is a disconnected poset, then $\mathcal{BK}_P$ is a primitive permutation group if and only if $P = C_{n_1} + C_{n_2}$, where $n_1 \neq n_2$ or $n_1=n_2=1$.
\end{thm}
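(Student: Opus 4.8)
The plan is to prove both directions using the tools developed in this subsection. For the ``if'' direction, I would split into the two cases of the hypothesis. When $n_1 = n_2 = 1$, the poset $P = C_1 + C_1 = A_2$ is a two-element antichain, which has exactly two linear extensions, so $\bkp$ is a transitive subgroup of $\symm_2$ by Proposition~\ref{prop:BKP_transitive}, hence all of $\symm_2$, which is trivially primitive. When $P = C_{n_1} + C_{n_2}$ with $n_1 \neq n_2$, Lemma~\ref{lem:BKP_disj_chains} gives $\bkp \cong \symm_n$ with $n = n_1 + n_2$, acting on $\linext(P)$. I must argue this action is primitive. The key point is that by Lemma~\ref{lem-stab-chains}, a point stabilizer is (conjugate to) $\symm_{n_1} \times \symm_{n_2}$ sitting inside $\symm_n$ as a Young subgroup; primitivity of the action is equivalent to this stabilizer being a maximal subgroup of $\symm_n$. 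It is a classical fact that $\symm_{n_1} \times \symm_{n_2}$ is a maximal subgroup of $\symm_{n_1+n_2}$ precisely when $n_1 \neq n_2$ (when $n_1 = n_2$ it is contained in the imprimitive wreath-type subgroup $\symm_{n_1} \wr \symm_2$), which is exactly the hypothesis. I would cite this or give the short direct argument: any block system for $\symm_n$ acting on $n_1$-subsets of $[n]$ corresponds to a $\symm_n$-invariant equivalence relation on $\binom{[n]}{n_1}$, and one checks directly that the only such relations are the two trivial ones when $n_1 \neq n_2$ (the ``complementation'' relation identifying a set with its complement only becomes an equivalence relation compatible with the group when $n_1 = n_2$).

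For the ``only if'' direction, suppose $P$ is disconnected and LE-primitive; write $P = P_1 + P_2$ with $|P_1|, |P_2| \geq 1$. By Lemma~\ref{lemma:not-chains-not-prim} (applied with the roles of $P_1, P_2$ symmetric), neither $P_1$ nor $P_2$ can fail to be a chain, so both are chains: $P = C_{a} + C_b$. If $a = b \geq 2$, I must show $P$ is not LE-primitive. Here I would exhibit an explicit nontrivial block system preserved by all $t_i$: given $\ell \in \linext(P)$, record the \emph{unordered} pair $\{\ell(P_1), \ell(P_2)\}$ of label sets (equivalently, the set partition of $[n]$ into the two label blocks, forgetting which block is which chain). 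This is a well-defined function on $\linext(P)$ since $P_1$ and $P_2$ are isomorphic chains, so swapping which chain gets which block of labels yields a legitimate relabeling; its fibers form a partition of $\linext(P)$ that is nontrivial (it is coarser than the partition by $\ell(P_1)$ used in Lemma~\ref{lemma:not-chains-not-prim}, but still has more than one block and blocks of size $> 1$ when $a = b \geq 2$), and one checks each $t_i$ preserves it, since $t_i$ either permutes labels within one chain's block or, when it swaps a label between the two blocks, it does so symmetrically so the unordered pair of blocks is sent to another unordered pair of blocks consistently. The one remaining sub-case is $a = b = 1$, which is allowed by the theorem, so nothing to rule out there.

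The main obstacle I anticipate is the primitivity verification in the ``if'' direction for $n_1 \neq n_2$: translating ``$\bkp$ is primitive'' into ``the point stabilizer is maximal'' via Lemma~\ref{lem-stab-chains}, and then either invoking the classical maximality of Young subgroups $\symm_{n_1} \times \symm_{n_2} \leq \symm_{n_1+n_2}$ for $n_1 \neq n_2$ or supplying a self-contained proof. The cleanest self-contained route: a block system is a $\bkp$-invariant equivalence relation $\sim$ on $\linext(P)$; since $\bkp \cong \symm_n$ acts on $\linext(P)$ the way $\symm_n$ acts on ordered set partitions of $[n]$ into a block of size $n_1$ and a block of size $n_2$ (which, as $n_1 \neq n_2$, is the same as the action on $n_1$-subsets of $[n]$), invariance of $\sim$ forces the relation ``$A \sim A'$'' to depend only on the $\symm_n$-orbit of the pair $(A, A')$, i.e., only on $|A \cap A'|$; checking the transitivity axiom for each candidate threshold value of $|A \cap A'|$ then eliminates all nontrivial possibilities. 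I would keep this argument brief, as the group-theoretic fact is standard. Everything else reduces to bookkeeping with the lemmas already in hand.
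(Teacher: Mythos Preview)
Your proposal is correct and follows essentially the same route as the paper: reduce to disjoint unions of chains via Lemma~\ref{lemma:not-chains-not-prim}, identify $\bkp\cong\symm_n$ and the point stabilizer as the Young subgroup $\symm_{n_1}\times\cdots\times\symm_{n_r}$ via Lemmas~\ref{lem:BKP_disj_chains} and~\ref{lem-stab-chains}, and then invoke the standard characterization of when such a Young subgroup is maximal in $\symm_n$. The only cosmetic difference is that for $C_a+C_a$ with $a\ge 2$ you exhibit the explicit block system $\ell\mapsto\{\ell(P_1),\ell(P_2)\}$, whereas the paper simply cites non-maximality of $\symm_a\times\symm_a$ (via its containment in $\symm_a\wr\symm_2$); these are two descriptions of the same obstruction.
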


\begin{proof}
    First, if one component of $P$ is not a chain, then $\mathcal{BK}_P$ is not primitive by Lemma~\ref{lemma:not-chains-not-prim}. If $P = C_{n_1} + C_{n_2} + \dots + C_{n_r}$, then $\bkp \cong \symm_n$ by Lemma~\ref{lem:BKP_disj_chains} and $\Stab(\ell) \cong \symm_{n_1} \times \symm_{n_2} \times \dots \times \symm_{n_r}$ for any $\ell \in \linext(P)$ by Lemma~\ref{lem-stab-chains}. We have that $\symm_{n_1} \times \symm_{n_2} \times \dots \times \symm_{n_r}$ is a maximal proper subgroup of $\symm_n$ if and only if $r=2$ and either $n_1 \neq n_2$ or $n_1 = n_2 = 1$. Since a permutation group is primitive if and only if the stabilizer of any element is a maximal subgroup, this completes the proof.
\end{proof}

The first application of this theorem is a classification of {\it series-parallel} posets--posets that can be constructed from the singleton $A_1$ using the ordinal sum and disjoint union operations--with primitive BK groups.

\begin{cor}
    If $P$ is a non-empty series-parallel poset, then $\mathcal{BK}_P$ is a primitive permutation group if and only if $P = C_{n_1} \oplus (C_{n_2} + C_{n_3}) \oplus C_{n_4}$, where $|n_2| \neq |n_3|$ or $|n_2| = |n_3| = 1$.
\end{cor}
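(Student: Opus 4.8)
The plan is to reduce the statement about series-parallel posets to the classification of disconnected LE-primitive posets (Theorem~\ref{thm:prim-disconn}) via Corollary~\ref{cor:symm_min_max}-type reasoning, adapted to the LE-primitive setting. First I would establish the analogue of Corollary~\ref{cor:symm_min_max} for LE-primitivity: since $\mathcal{BK}_{C_a \oplus P \oplus C_b} \cong \bkp$ by Proposition~\ref{prop:BKP_osum} (the BK groups of chains being trivial), and moreover the bijection $\linext(C_a \oplus P \oplus C_b) \to \linext(P)$ intertwines the two group actions, a partition of one linear extension set is $G$-stable if and only if the corresponding partition of the other is. Hence $P$ is LE-primitive if and only if $C_a \oplus P \oplus C_b$ is LE-primitive, for any chains $C_a, C_b$.

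Next I would argue that any nonempty series-parallel poset $P$ can be written in the normal form $C_a \oplus R \oplus C_b$ where $R$ is either empty, a single point, or a series-parallel poset that is a disjoint union of two or more nonempty components (i.e., $R$ has neither a unique minimal nor a unique maximal element). This is the standard structure theorem for series-parallel posets: decompose $P$ as far as possible as an ordinal sum, peeling off the maximal initial and final chains; the remaining middle piece, if it has more than one element, must be a disjoint union since it is series-parallel and indecomposable as an ordinal sum. By the previous paragraph, $P$ is LE-primitive iff $R$ is. If $R$ is empty or a point, $P$ is a chain, which is trivially LE-primitive, and indeed fits the claimed form with $n_2 = n_3$ read appropriately or $C_{n_2}+C_{n_3}$ collapsing — actually one should treat the chain case as the degenerate instance $P = C_{n_1} \oplus (C_1 + C_0) \oplus C_{n_4}$ is not literally allowed since components must be nonempty, so I would simply note separately that a single chain is LE-primitive and is excluded from the disconnected-middle case; the cleanest phrasing is to say $P$ is LE-primitive iff $P$ is a chain or $P = C_{n_1} \oplus (C_{n_2}+C_{n_3}) \oplus C_{n_4}$ with the stated condition, but since a chain $C_n = C_n \oplus (\varnothing)$ and the corollary as stated presumably intends the chains $C_{n_1}, C_{n_4}$ to possibly be empty, I would follow the paper's convention (allowing $C_0 = \varnothing$) so that chains are subsumed.

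So assume $R$ is a disjoint union of $\geq 2$ nonempty series-parallel posets. Now apply Theorem~\ref{thm:prim-disconn}: $R$ is LE-primitive iff $R = C_{n_2} + C_{n_3}$ with $n_2 \neq n_3$ or $n_2 = n_3 = 1$. (Here I use that a disjoint union of series-parallel posets is series-parallel, and the components appearing must be chains by the theorem, which is consistent with series-parallel.) Combining, $P$ is LE-primitive iff $P = C_{n_1} \oplus (C_{n_2} + C_{n_3}) \oplus C_{n_4}$ with $|n_2| \neq |n_3|$ or $|n_2| = |n_3| = 1$, as claimed.

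The main obstacle I anticipate is getting the degenerate cases exactly right: the status of chains themselves, whether empty chains $C_0$ are permitted as ordinal summands in the statement, and ensuring the normal-form decomposition of series-parallel posets is genuinely unique enough that "the middle piece is a disjoint union of $\geq 2$ parts or has $\leq 1$ element" is forced. The structural decomposition claim — that after removing the maximal initial and terminal chains, what remains is disconnected or trivial — is the real content, and it rests on the fact that a series-parallel poset that is ordinal-sum-indecomposable and has more than one element must be a nontrivial disjoint union; I would justify this by induction on $|P|$ using the inductive definition of series-parallel posets. Everything else is bookkeeping once the LE-primitive analogue of Corollary~\ref{cor:symm_min_max} is in hand.
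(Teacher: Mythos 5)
Your reduction via the LE-primitive analogue of Corollary~\ref{cor:symm_min_max} and the appeal to Theorem~\ref{thm:prim-disconn} for a disconnected middle piece are fine, but your structural claim about series-parallel posets is false, and this leaves a genuine gap. After peeling off the maximal initial and final chains, the remaining piece need not be trivial or disconnected, because it need not be ordinal-sum-indecomposable. For instance $P = A_2 \oplus A_2$ (two incomparable elements all below two incomparable elements) is a connected series-parallel poset with neither a unique minimal nor a unique maximal element, so no chains can be peeled, yet it is an ordinal sum of two non-chain summands and is not a disjoint union. Your case analysis (middle piece empty, a point, or a disjoint union of at least two parts) simply does not cover such posets, so your argument cannot decide whether they are LE-primitive and the converse direction of the corollary is not established; the fact you cite (an ordinal-sum-indecomposable series-parallel poset with more than one element is a nontrivial disjoint union) is true but does not yield your normal form.

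The missing step, which is exactly what the paper uses, is the ordinal-sum case: if $P = P_1 \oplus P_2$ with $|P_1|,|P_2| \ge 1$ and $P$ is LE-primitive, then one of $P_1,P_2$ must be a chain. This follows from Proposition~\ref{prop:BKP_osum}: $\mathcal{BK}_{P_1 \oplus P_2} \cong \mathcal{BK}_{P_1} \times \mathcal{BK}_{P_2}$ acting coordinatewise on $\linext(P_1) \times \linext(P_2)$, and when both factors have at least two linear extensions this action preserves the nontrivial partition into fibers of the projection onto $\linext(P_1)$, hence is imprimitive. With this in hand the argument runs on the series-parallel recursion rather than on a chain-peeling normal form: if $P$ is disconnected, apply Theorem~\ref{thm:prim-disconn}; if $P$ is an ordinal sum, one summand is a chain and you recurse on the other; this is what forces the shape $C_{n_1} \oplus (C_{n_2}+C_{n_3}) \oplus C_{n_4}$. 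Your worry about degenerate chain cases is minor by comparison; the ordinal-sum imprimitivity argument is the essential missing idea.
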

\begin{proof}
    Every poset of the stated form is series-parallel with a primitive BK group by Theorem~\ref{thm:prim-disconn}. Conversely, let $P$ be a non-empty series-parallel poset whose BK group is a primitive permutation group. If $P = P_1 + P_2$, where $|P_1|, |P_2| \ge 1$, then $P = C_{n_2} + C_{n_3}$ with the stated conditions by Theorem~\ref{thm:prim-disconn}. If $P = P_1 \oplus P_2$, where $|P_1|, |P_2| \ge 1$, then $P_1 = C_{n_1}$ or $P_2 = C_{n_4}$ by Proposition~\ref{prop:BKP_osum}. Thus, $P$ is of the desired form.
\end{proof}


We now return to disconnected posets whose BK groups are the full symmetric group. The proof of our classification theorem in this case follows a similar logic as that of Theorem~\ref{thm:prim-disconn}.

\begin{thm}\label{thm:symm_disconn}
    If $P$ is a disconnected poset, then $\mathcal{BK}_P \cong \symm_{\linext(P)}$ if and only if $P = C_{n_1} + A_1$.
\end{thm}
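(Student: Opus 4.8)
The plan is to obtain this as a refinement of the primitivity classification of Theorem~\ref{thm:prim-disconn}, using essentially only an order count.

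I would first dispatch the reverse implication. If $P = C_{n_1} + A_1$, then a linear extension of $P$ is determined by the position of the unique element of $A_1$, so $|\linext(P)| = n_1 + 1$. Since $P$ is a disjoint union of two non-empty chains, Lemma~\ref{lem:BKP_disj_chains} gives $\bkp \cong \symm_{n_1+1}$, hence $|\bkp| = (n_1+1)!$. As $\bkp$ is by definition a subgroup of $\symm_{\linext(P)}$, and $|\symm_{\linext(P)}| = (n_1+1)!$ as well, equality of orders yields $\bkp = \symm_{\linext(P)}$, so $P$ is LE-symmetric.

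For the forward implication, note that LE-symmetric posets are LE-primitive (the full symmetric group is primitive), so Theorem~\ref{thm:prim-disconn} forces $P = C_{n_1} + C_{n_2}$ with $n_1 \neq n_2$, or $n_1 = n_2 = 1$. In every such case $P$ is a disjoint union of two non-empty chains, so Lemma~\ref{lem:BKP_disj_chains} gives $|\bkp| = (n_1 + n_2)!$; but LE-symmetry also forces $|\bkp| = |\symm_{\linext(P)}| = |\linext(P)|!$, and $|\linext(P)| = \binom{n_1+n_2}{n_2}$ counts the interleavings of the two chains. Comparing factorials gives $\binom{n_1+n_2}{n_2} = n_1 + n_2$; writing $m = n_1 + n_2$, the identity $\binom{m}{k} = m$ holds only for $k \in \{1, m-1\}$, so $n_2 = 1$ or $n_2 = n_1$. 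The case $n_2 = n_1$ is excluded by $n_1 \neq n_2$ unless $n_1 = n_2 = 1$; in all remaining configurations the smaller chain has exactly one element, and by commutativity of disjoint union $P = C_m + A_1$ for some $m \ge 1$, as claimed.

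I do not anticipate a serious obstacle: this is a corollary of Theorem~\ref{thm:prim-disconn} together with the elementary observation that $\binom{m}{k} = m$ pins $k$ down to $1$ (or $m-1$), which is what collapses the two-parameter family $C_{n_1} + C_{n_2}$ to the one-parameter family $C_m + A_1$. The only subtlety is carefully tracking the degenerate configurations — $n_1 = n_2 = 1$, and which of the two chains is the longer one — so that they are all seen to lie inside the claimed family.
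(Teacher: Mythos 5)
Your proof is correct, and it reaches the theorem by a somewhat different route than the paper does. For the forward direction you pass through the primitivity classification, Theorem~\ref{thm:prim-disconn}, to reduce to two chains $C_{n_1}+C_{n_2}$ and then compare $|\bkp|=(n_1+n_2)!$ with $|\linext(P)|!=\binom{n_1+n_2}{n_2}!$. The paper instead argues directly: any non-chain component is ruled out by Lemma~\ref{lemma:not-chains-not-prim}, and for an arbitrary disjoint union of chains $C_{n_1}+\dots+C_{n_r}$ it applies Lemma~\ref{lem:BKP_disj_chains} to get $\bkp\cong\symm_n$, so LE-symmetry becomes the single count $|\linext(P)|=n$, which forces the stated form. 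Your route buys a quicker count (a binomial rather than a multinomial, with $r\ge 3$ already excluded), at the cost of invoking the heavier Theorem~\ref{thm:prim-disconn}, whose proof needs the stabilizer computation of Lemma~\ref{lem-stab-chains} and the maximality of Young subgroups; since that theorem is established before this one, there is no circularity. The reverse direction is the same order count in both treatments.

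One local slip: from $\binom{m}{k}=m$ with $m=n_1+n_2$ and $k=n_2$, the alternative $k=m-1$ translates to $n_1=1$, not to $n_2=n_1$, so your stated dichotomy ``$n_2=1$ or $n_2=n_1$'' is wrong as written (e.g.\ $n_1=1$, $n_2=3$ satisfies the binomial identity but neither alternative), and the ensuing sentence excluding $n_2=n_1$ via $n_1\neq n_2$ is beside the point. The fix is immediate: the correct alternatives are $n_2=1$ or $n_1=1$, in either case one chain is a singleton, and commutativity of $+$ gives $P=C_m+A_1$, so your conclusion stands unchanged.
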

\begin{proof}
    First, if one component of $P$ is not a chain, then $\mathcal{BK}_P \ncong \symm_{\linext(P)}$ by Lemma~\ref{lemma:not-chains-not-prim}. If $P = C_{n_1} + C_{n_2} + \dots + C_{n_r}$, then $\bkp \cong \symm_n$ by Lemma~\ref{lem:BKP_disj_chains}. Thus, $\mathcal{BK}_P \cong \symm_{\linext(P)}$ if and only if $|\linext(P)| = n$, which happens if and only if $P$ is of the stated form.
\end{proof}

\begin{cor}\label{cor:symm_seri_paral}
    If $P$ is a non-empty series-parallel poset $P$, then $\mathcal{BK}_P \cong \symm_{\linext(P)}$ if and only if $P = C_{n_1} \oplus (C_{n_2} + A_1) \oplus C_{n_3}$.
\end{cor}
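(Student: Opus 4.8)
The plan is to mirror the proof of the preceding corollary on LE-primitive series-parallel posets, using Theorem~\ref{thm:symm_disconn} in place of Theorem~\ref{thm:prim-disconn}, together with the structural results Corollary~\ref{cor:symm_min_max} and Corollary~\ref{cor:symm_osum} describing how LE-symmetry interacts with ordinal sums. The forward implication is immediate, and the reverse is a short induction on $|P|$ using the recursive description of series-parallel posets.

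For the easy direction, suppose $P = C_{n_1} \oplus (C_{n_2} + A_1) \oplus C_{n_3}$. Then $P$ is series-parallel, since $C_{n_2} + A_1$ is a disjoint union of chains (hence built from $A_1$ using $+$) and ordinal sums of series-parallel posets are series-parallel. To see it is LE-symmetric, first observe that $C_{n_2} + A_1$ is LE-symmetric: if $n_2 = 0$ it is the one-element chain $A_1$, and otherwise it is a disconnected LE-symmetric poset by Theorem~\ref{thm:symm_disconn}. Applying Corollary~\ref{cor:symm_min_max} with the chains $C_{n_1}$ and $C_{n_3}$ then yields that $P$ itself is LE-symmetric.

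For the reverse direction, let $P$ be a non-empty LE-symmetric series-parallel poset and induct on $|P|$; the base case $|P| = 1$ is $P = A_1$, which has the stated form. When $|P| \geq 2$, deleting trivial operations from a recursive construction of $P$ as a series-parallel poset exhibits it as $P = P_1 \star P_2$ with $\star\in\{+,\oplus\}$ and $P_1, P_2$ non-empty series-parallel posets. If $\star = +$, then $P$ is disconnected, so Theorem~\ref{thm:symm_disconn} forces $P = C_m + A_1$, which has the stated form. If $\star = \oplus$, then $P$ is connected, and Corollary~\ref{cor:symm_osum} guarantees that one of $P_1, P_2$ is a chain; say $P_1 = C_a$ with $a \geq 1$ (the case $P_2 = C_b$ is symmetric), so $P = C_a \oplus P_2$. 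By Corollary~\ref{cor:symm_min_max}, $P_2$ is LE-symmetric, and it is a non-empty series-parallel poset with $|P_2| < |P|$; hence the inductive hypothesis gives $P_2 = C_{m_1} \oplus (C_{m_2} + A_1) \oplus C_{m_3}$, and absorbing $C_a$ into the bottom chain yields $P = C_{a+m_1} \oplus (C_{m_2} + A_1) \oplus C_{m_3}$ (in the symmetric case one instead absorbs into the top chain).

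The argument is essentially routine given the machinery already developed; the only points needing care are the bookkeeping of degenerate cases -- chains arise as the specialization $n_2 = 0$, so $C_{n_2} + A_1 = A_1$, and one must permit $n_1, n_2, n_3 = 0$ throughout -- and the standard observation that the factors $P_1, P_2$ extracted from the outermost operation of a series-parallel construction may themselves be taken series-parallel. I do not anticipate a genuine obstacle beyond this.
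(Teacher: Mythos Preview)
Your proof is correct and follows essentially the same route as the paper: the forward direction via Theorem~\ref{thm:symm_disconn} and Corollary~\ref{cor:symm_min_max}, and the reverse direction by splitting on whether the outermost series-parallel operation is $+$ (invoking Theorem~\ref{thm:symm_disconn}) or $\oplus$ (invoking Corollary~\ref{cor:symm_osum}, equivalently Proposition~\ref{prop:BKP_osum}, to peel off a chain). The only difference is cosmetic: you make the induction and the degenerate cases explicit, whereas the paper leaves the iteration implicit.
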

\begin{proof}
    Every poset $P$ of the form $C_{n_1} \oplus (C_{n_2} + A_1) \oplus C_{n_3}$ is series-parallel and satisfies $\mathcal{BK}_P  \cong \symm_{\linext(P)}$ by Theorem~\ref{thm:symm_disconn}. Conversely, let $P$ be a non-empty series-parallel poset such that $\mathcal{BK}_P  \cong \symm_{\linext(P)}$. If $P = P_1 + P_2$, where $|P_1|, |P_2| \ge 1$, then $P = C_{n_2} + A_1$ by Theorem~\ref{thm:symm_disconn}. If $P = P_1 \oplus P_2$, where $|P_1|, |P_2| \ge 1$, then $P_1 = C_{n_1}$ or $P_2 = C_{n_4}$ by Proposition~\ref{prop:BKP_osum}. Thus, $P$ is of the desired form.
\end{proof}


\subsection{Families of connected posets}\label{subsec:symm_conn}

We next study non-series-parallel posets whose BK groups are the full symmetric group on the set of linear extensions. Theorem~\ref{thm:symm_disconn} allows us to restrict our study to those which are connected. The following class of posets presents a suggestive starting point:

\begin{definition}\label{defn:N-poset}
    For integers $a,b,c \ge 1$, the poset $N_{a,b,c}$ is defined to be a poset of $a+b+c$ elements with the relations
    \[ v_1 < v_2 < \dots < v_{a+1} > v_{a+2} > \dots > v_{a+b+1} < v_{a+b+2} < \dots < v_{a+b+c+1}. \]
\end{definition}

\begin{figure}[htp]
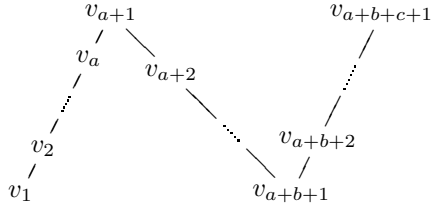

    \centering
    \[ 
	\xy
		{\ar@{-} (0,0)*+{v_1}; (3,6)*+{v_2} };
		{\ar@{-} (3.5,7)*+{}; (5.5,11)*+{} };
		{\ar@{..} (5,10)*+{}; (7,14)*+{} };
		{\ar@{-} (6.5,13)*+{}; (8.5,17)*+{} };
		{\ar@{-} (9,18)*+{v_{a}}; (12,24)*+{v_{a+1}} };
		{\ar@{-} (20,16)*+{v_{a+2}}; (13,23)*+{} };
		{\ar@{-} (27,9)*+{}; (21,15)*+{} };
		{\ar@{..} (30,6)*+{}; (26,10)*+{} };
		{\ar@{-} (36,0)*+{v_{a+b+1}}; (29,7)*+{} };
		{\ar@{-} (36.5,1)*+{}; (39.5,7)*+{v_{a+b+2}} };
		{\ar@{-} (40,8)*+{}; (43,14)*+{} };
		{\ar@{..} (42.5,13)*+{}; (45,18)*+{} };
		{\ar@{-} (44.5,17)*+{}; (48,24)*+{v_{a+b+c+1}} };
	\endxy
\]
    \caption{The poset $N_{a,b,c}$.}
    \label{fig:N-poset}
\end{figure}

In particular, every non-empty non-series-parallel poset contains a poset $N_{1,b,1}$ as a convex induced subposet.

\begin{prop}\label{prop:N1b1}
    $\mathcal{BK}_{N_{1,b,1}} \cong \symm_{\linext(N_{1,b,1})}$ for all $b \ge 1$.
\end{prop}

\begin{proof}
    Let $n = b+3$, so $N_{1,b,1} = v_1 < v_2 > v_3 > \ldots > v_{n-2} > v_{n-1} < v_n$.

    For $n \in \{4, 5\}$, the theorem is verified by a calculation. So suppose $n \ge 6$.
    
    For $i,j \in [n]$, there is at most one linear extension $\ell$ with $\ell(v_1) = i$ and $\ell(v_n) = j$. If it exists, we denote it $\ell_{i,j}$.
    
    First, we claim $(t_2 t_3)^3$ and $(t_{n-3} t_{n-2})^3$ are the transpositions $(\ell_{3,1}\; \ell_{4,1})$ and $(\ell_{n,n-3}\; \ell_{n,n-2})$ respectively. This follows from the characterization of when the braid relations hold.
    
    Now, it suffices to prove that $\mathcal{BK}_{N_{1,b,1}}$ is 2-transitive, which is equivalent to its point-stabilizers' being transitive.
    Consider $\Stab(\ell_{n,1})$. Note that $t_2, t_3, \dots, t_{n-2} \in \Stab(\ell_{n,1})$. Hence, $\Stab(\ell_{n,1})$ is transitive on each of $\{\ell_{i,j} \mid i \le n-1, \ j \ge 2\}$, $\{\ell_{i,j} \mid i \le n-1, \ j = 1\}$, and $\{\ell_{i,j} \mid i = n, \ j \ge 2\}$. It remains to observe that
    \begin{align*}
    (\ell_{3,2}\; \ell_{n,2}) &= (t_1 t_{n-1} t_{n-2} \dots t_4) (t_2 t_3)^3 (t_4 \dots t_{n-2} t_{n-1} t_1) \in \Stab(\ell_{n,1}) \\
    (\ell_{n-1, 1}\; \ell_{n-1,n-2}) &= (t_{n-1} t_1 t_2 \dots t_{n-4}) (t_{n-3} t_{n-2})^3 (t_{n-4} \dots t_2 t_1 t_{n-1}) \in \Stab(\ell_{n,1}),
    \end{align*}
    so $\Stab(\ell_{n,1})$ is transitive.
\end{proof}

\begin{prop}\label{prop:N11c}
    $\mathcal{BK}_{N_{1,1,c}} \cong \symm_{\linext(N_{1,1,c})}$ and $\mathcal{BK}_{N_{a,1,1}} \cong \symm_{\linext(N_{a,1,1})}$ for all $a, c \ge 1$.
\end{prop}
\begin{proof}
    By duality, it suffices to prove the result for $N_{1,1,c}$. Let $n = c+3$, so $N_{1,1,c} = v_1 < v_2 > v_3 < v_4 < \dots < v_n$.

    For $i \in [n-1]$ and $j \in [\max\{3,i+1\},n]$, there is exactly one linear extension $\ell$ with $\ell(v_1)=i$ and $\ell(v_n) = j$, which we denote $\ell_{i,j}$.

    First, we claim $(t_2 t_3)^3= (\ell_{1,3}\; \ell_{1,4})$. This follows from the characterization of when the braid relations hold.

    Now, it suffices to prove that $\mathcal{BK}_{N_{1,1,c}}$ is 2-transitive, which is equivalent to its point-stabilizers' being transitive. Consider $\Stab(\ell_{n-1,n})$. Note that $t_1$, $t_2, \dots, t_{n-3}$, $t_{n-1} \in \Stab(\ell_{n-1,n})$. Hence, $\Stab(\ell_{n-1,n})$ is transitive on $\{\ell_{i,j} \mid j \le n-2\}$ and $\{\ell_{i,j} \mid j \ge n-1, \ i \le n-1\}$. It remains to observe that
    \[
    (\ell_{1,3}\; \ell_{1,n-1}) = (t_{n-2} t_{n-3} \dots t_4) (t_2 t_3)^3 (t_4 \dots t_{n-3} t_{n-2}) \in \Stab(\ell_{n-1,n}),
    \]
    so $\Stab(\ell_{n-1,n})$ is transitive.
\end{proof}

By using the same proof strategy, we are able to prove that the BK groups of the following posets are the full symmetric group on the set of linear extensions:

\begin{figure}[htp]
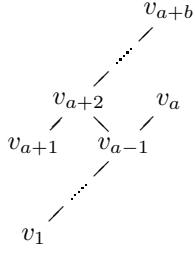

    \centering
    \[ 
	\xy
		{\ar@{-} (6,6)*+{v_1}; (11,11)*+{} };
		{\ar@{..} (10,10)*+{}; (14,14)*+{} };
		{\ar@{-} (13,13)*+{}; (17,17)*+{} };
		{\ar@{-} (18,18)*+{v_{a-1}}; (24,24)*+{v_{a}} };
		{\ar@{-} (17,19)*+{}; (13,23)*+{} };
		{\ar@{-} (6,18)*+{v_{a+1}}; (12,24)*+{v_{a+2}} };
		{\ar@{-} (13,25)*+{}; (17,29)*+{} };
		{\ar@{..} (16,28)*+{}; (20,32)*+{} };
		{\ar@{-} (19,31)*+{}; (24,36)*+{v_{a+b}} };
	\endxy
\]
    \caption{The poset $M_{a,b}$.}
    \label{fig:M-poset}
\end{figure}

\begin{prop}\label{prop:N-hanging-chains}
    For $a,b \geq 2$, let $M_{a,b}$ be a poset on $a+b$ elements with the relations $v_1 < v_2 < \dots < v_a$, $v_{a+1} < v_{a+2} < \dots < v_{a+b}$, and $v_{a-1} < v_{a+2}$ (see Figure~\ref{fig:M-poset}). Then $\mathcal{BK}_{M_{a,b}} \cong \symm_{\linext(M_{a,b}})$.
\end{prop}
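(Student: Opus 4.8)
The plan is to describe $\linext(M_{a,b})$ completely explicitly, locate inside $\mathcal{BK}_{M_{a,b}}$ a copy of $\symm_a\times\symm_b$ together with one well-placed transposition, and observe that these already generate the full symmetric group on $\linext(M_{a,b})$.

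First I would parametrize the linear extensions. A linear extension of $M_{a,b}$ is just an interleaving of the chain $v_1<\dots<v_a$ with the chain $v_{a+1}<\dots<v_{a+b}$ subject to the single constraint $\ell(v_{a-1})<\ell(v_{a+2})$. Since $v_1,\dots,v_{a-1}$ and $v_{a+1}$ are all forced below $v_{a+2}$, while $v_a$ is the only other element that may or may not lie below $v_{a+2}$, one gets $\ell(v_{a+2})\in\{a+1,a+2\}$; examining these two cases shows that $\ell\mapsto(\ell(v_{a+1}),\ell(v_a))$ is a bijection from $\linext(M_{a,b})$ onto the grid $G:=\{(i,j):1\le i\le a,\ a+1\le j\le a+b\}$ together with one extra point $\star$ (the concatenation $v_1v_2\cdots v_a v_{a+1}\cdots v_{a+b}$, of coordinates $(a+1,a)$). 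In particular $|\linext(M_{a,b})|=ab+1$; I write $\ell_{i,j}$ for the linear extension with coordinates $(i,j)$.

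Next I would compute the action of each generator in these coordinates, expecting the following. The generators $t_1,\dots,t_{a-1}$ only move $v_{a+1}$, sliding it among $v_1,\dots,v_{a-1}$, and hence act on the first coordinate by the adjacent transpositions of $\{1,\dots,a\}$ while fixing $\star$; symmetrically, $t_{a+1},\dots,t_{a+b-1}$ only move $v_a$, sliding it along the second chain, and hence act on the second coordinate by the adjacent transpositions of $\{a+1,\dots,a+b\}$ while fixing $\star$. Thus the first family generates a copy of $\symm_a$ permuting the rows of $G$, the second a commuting copy of $\symm_b$ permuting the columns, and $\langle t_m:m\ne a\rangle\cong\symm_a\times\symm_b$ acts transitively on $G$ while fixing $\star$. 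The lone remaining generator $t_a$ should turn out to be exactly the transposition $(\ell_{a,a+1}\ \star)$, exchanging a corner of the grid with $\star$.

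Granting this, the conclusion is immediate: since $\symm_a\times\symm_b$ is transitive on $G$ and fixes $\star$, conjugating $t_a$ by its elements produces the transposition $(\ell\ \star)$ for every $\ell\in G$, and transpositions all sharing the common point $\star$ generate the symmetric group on $G\cup\{\star\}=\linext(M_{a,b})$; hence $\mathcal{BK}_{M_{a,b}}=\symm_{\linext(M_{a,b})}$. The main obstacle is the middle step — pushing through the finite but slightly fiddly case analysis (according to whether $\ell(v_{a+2})$ is $a+1$ or $a+2$, and where $v_a$ and $v_{a+1}$ sit) to confirm that the two families $t_1,\dots,t_{a-1}$ and $t_{a+1},\dots,t_{a+b-1}$ never disturb $\star$ and never interact, and that $t_a$ is genuinely a single transposition rather than a product. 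One should keep an eye on the degenerate cases $a=2$ or $b=2$, where a symmetric-group factor collapses to $\symm_2$, though these cause no real trouble; already $M_{2,2}$, with its five linear extensions and $\mathcal{BK}_{M_{2,2}}=\symm_5$, exhibits the whole mechanism.
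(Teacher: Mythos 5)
Your proposal is correct and follows essentially the same route as the paper: both proofs rest on the observations that every generator $t_m$ with $m \neq a$ stabilizes the concatenation $\ell^*$ (your $\star$) and that $t_a$ acts as the single transposition $(\ell^*\;\, t_a\ell^*)$, and both conclude by conjugating this transposition by a point-stabilizer that acts transitively on $\linext(M_{a,b})\setminus\{\ell^*\}$, so that all transpositions through $\ell^*$ are obtained and $\mathcal{BK}_{M_{a,b}}=\symm_{\linext(M_{a,b})}$. The only divergence is the middle step: the paper gets transitivity of $\Stab(\ell^*)$ on the complement for free from Proposition~\ref{prop:BKP_transitive} (since $\ell^*$ is only moved by $t_a$, it is a degree-one vertex of the linear extension graph, so the remaining graph stays connected using the edges from $t_m$, $m\neq a$), whereas you verify it by the explicit parametrization $\linext(M_{a,b})\cong\bigl([a]\times[a+1,a+b]\bigr)\cup\{\star\}$ with $\langle t_m : m\neq a\rangle$ acting as $\symm_a\times\symm_b$ on the grid --- more case analysis, but it additionally yields $|\linext(M_{a,b})|=ab+1$ and a complete description of the permutation action, and your checks (including the degenerate cases $a=2$, $b=2$) go through.
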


\begin{proof}
    First, let $\ell^*$ be the linear extension such that $\ell^*(v_i) = i$. Notice that
    \[
    t_1, t_2, \dots, \widehat{t_{a}}, \dots, t_{a+b-1} \in \Stab(\ell^*).
    \]
    Further note that $t_{a} \notin \Stab(\ell)$ if and only if $\ell \in \{\ell^*, t_a(\ell^*)\}$, i.e., $t_{a}$ acts as the transposition $(\ell^*\; t_a(\ell^*))$ on $\linext(M_{a,b})$.
    
    Thus, it suffices to show that $\Stab(\ell^*)$ is transitive on $\linext(M_{a,b}) \setminus \{\ell^*\}$. This follows from the observations above and the fact that BK groups are always transitive (Proposition~\ref{prop:BKP_transitive}).
\end{proof}

One way to generalize the poset $M_{a,b}$ is to replace the covering relation $v_{a-1} < v_{a+2}$ with another relation $x < y$, where $x$ and $y$ are drawn from two disjoint chains. However, in general these posets do not have the full symmetric BK groups; for example, we found a counterexample in the poset defined by the relations $v_1 < v_2 < v_3 < v_4$, $v_5 < v_6 < v_7 < v_8$, and $v_2 < v_7$.

\subsubsection{Conjectural families and computational results}\label{subsec:symm_conj}

A natural conjectural family of posets $P$ for which $\mathcal{BK}_P = \symm_{\linext(P)}$ follows from our investigation above.

\begin{conjecture}\label{conj:Nabc}
    $\mathcal{BK}_{N_{a,b,c}} \cong \symm_{\linext(N_{a,b,c})}$ for all $a,b,c \ge 1$.
\end{conjecture}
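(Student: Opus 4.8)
The plan is to prove Conjecture~\ref{conj:Nabc} by induction on $n = a+b+c$, reducing the general case to the already-established base cases $N_{1,b,1}$, $N_{1,1,c}$, $N_{a,1,1}$ (Propositions~\ref{prop:N1b1},~\ref{prop:N11c}) together with the structural tools of Section~\ref{sec:symm}. The key observation is that both ``arms'' of $N_{a,b,c}$ are chains attached at the extremes, so $N_{a,b,c}$ contains many convex induced subposets of the same shape with smaller parameters, and one of its points is so constrained that the corresponding BK generator acts as a single transposition.

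\textbf{Step 1: set up coordinates on linear extensions.} Write $N = N_{a,b,c}$ with elements $v_1 < \dots < v_{a+1} > \dots > v_{a+b+1} < \dots < v_{a+b+c+1}$, and let the two local minima be $v_1$ and $v_{a+b+1}$ and the local maximum be $v_{a+1}$. A linear extension is essentially determined by recording, for each of the three chains meeting at a min/max, how its labels interleave; I would argue that a linear extension $\ell$ is determined by the triple of values $(\ell(v_1), \ell(v_{a+1}), \ell(v_{a+b+1}))$ together with a bounded amount of extra interleaving data, analogous to the $\ell_{i,j}$ notation used in the base cases. The goal is $\bkp = \symm_{\linext(N)}$, for which it suffices (by Proposition~\ref{prop:BKP_transitive} and the transitivity-plus-one-transposition criterion) to exhibit a single transposition in $\bkp$ and show $\bkp$ is $2$-transitive, i.e.\ that some point-stabilizer is transitive.

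\textbf{Step 2: extract a transposition and a large stabilizer subgroup.} I would first locate an index $i$ for which $t_i$ acts as a transposition: the element $v_{a+1}$ is the global maximum of the ``up-down'' part $v_1 < \dots < v_{a+1} > \dots > v_{a+b+1}$, and when $\ell$ assigns to $v_{a+1}$ its extremal admissible value, the generator swapping $v_{a+1}$ with its neighbor toggles between exactly two linear extensions; more robustly, I would apply the braid-relation characterization (Proposition~\ref{prop:rel_braid}) to a convex induced $N_{1,b',1}$ sitting inside $N$ to produce $(t_p t_{p+1})^3$ as an explicit transposition, exactly as in the proofs of Propositions~\ref{prop:N1b1} and~\ref{prop:N11c}. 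Then I would fix a distinguished linear extension $\ell^*$ (e.g.\ the one filling the first chain, then the middle descending chain, then the last chain, in order) and observe that $\Stab(\ell^*)$ contains all $t_i$ except those whose swap is ``active'' at $\ell^*$ — only a bounded number of exceptional indices, corresponding to the min/max junctions. By Corollary~\ref{cor:BKP_cvx_ind}, the convex induced subposets $N_{a-1,b,1}$, $N_{1,b,c-1}$, $N_{1,b-1,1}$ (and similar) embed their BK groups into $\bkp$; by induction these are full symmetric groups on their linear extensions, giving large transitive subgroups of various stabilizers.

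\textbf{Step 3: conjugate to fill in the missing orbits and conclude.} With $\Stab(\ell^*)$ containing the symmetric-group images coming from the two arm-chains (each $\cong \symm_m$ by the inductive hypothesis applied to a convex $N$ with one parameter reduced), the orbits of $\Stab(\ell^*)$ partition $\linext(N)$ into a controlled number of pieces indexed by the ``boundary data'' near $v_1$, $v_{a+1}$, $v_{a+b+1}$; I would then, exactly in the style of the displayed computations in Propositions~\ref{prop:N1b1} and~\ref{prop:N11c}, write down one or two explicit conjugates of the transposition from Step~2 — of the form $w^{-1}(t_p t_{p+1})^3 w$ for a suitable word $w$ in the generators — that connect these pieces, proving $\Stab(\ell^*)$ is transitive on $\linext(N)\setminus\{\ell^*\}$ and hence (with Step~2's transposition) that $\bkp$ is $2$-transitive, and therefore $\bkp \supseteq A_{\linext(N)}$; a parity/explicit-odd-permutation check then upgrades this to $\symm_{\linext(N)}$. \textbf{The main obstacle} I anticipate is Step~3: when all of $a,b,c\ge 2$, a stabilizer built only from the arm-chain symmetric groups need not be transitive, and identifying exactly which conjugated transpositions bridge the remaining orbits — and verifying that the required words $w$ genuinely lie in the stabilizer — is delicate, since (unlike in $M_{a,b}$, Proposition~\ref{prop:N-hanging-chains}) there is no single generator acting as a transposition that is visibly in the stabilizer, and the generalization to relations between two disjoint chains is already known (per the remark after Proposition~\ref{prop:N-hanging-chains}) to fail. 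Handling the interplay of all three junctions simultaneously, rather than one at a time as in the $N_{1,b,1}$ and $N_{1,1,c}$ cases, is where the real work lies.
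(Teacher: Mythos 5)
This statement is an open conjecture in the paper, not a theorem: the authors prove only the boundary cases $N_{1,b,1}$, $N_{1,1,c}$, $N_{a,1,1}$ (Propositions~\ref{prop:N1b1} and~\ref{prop:N11c}) and the variant $M_{a,b}$, report computational verification for $1 \le a,b,c \le 4$, and explicitly remark that their method (hands-on analysis of the linear extension graph, producing a transposition from a failed braid relation and checking $2$-transitivity) does not visibly extend and that ``a different proof strategy may be required for the general case.'' Your proposal follows essentially the same strategy as those special cases, and you yourself flag in Step~3 that the decisive point --- transitivity of a point-stabilizer when all of $a,b,c \ge 2$ --- is not established. That is precisely the part that constitutes the conjecture, so what you have is a plan, not a proof.

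Beyond the acknowledged gap, two concrete steps would fail as written. In Step~1, a linear extension of $N_{a,b,c}$ is \emph{not} determined by $(\ell(v_1),\ell(v_{a+1}),\ell(v_{a+b+1}))$ plus bounded data: the chain $v_1<\dots<v_{a+1}$ and the chain $v_{a+b+1}<\dots<v_{a+b+c+1}$ are almost entirely incomparable, so their interleaving alone contributes on the order of $\binom{a+c}{a}$ degrees of freedom; this is why the $\ell_{i,j}$ bookkeeping of the proved cases does not carry over. In Steps~2--3, the appeal to Corollary~\ref{cor:BKP_cvx_ind} plus induction to place ``large transitive subgroups'' inside $\Stab(\ell^*)$ does not work as stated: the corollary gives an abstract embedding of $\bkq$ into $\bkp$, but the shifted generators $t_{i+1},\dots,t_{i+q-1}$ act on \emph{all} of $\linext(N)$, not merely on the linear extensions whose window labels sit on the convex subposet, and they need not stabilize $\ell^*$ at all; likewise the conjugating words $w$ in your proposed bridging transpositions $w^{-1}(t_pt_{p+1})^3w$ must be verified to lie in $\Stab(\ell^*)$, which in the paper's cases is done by explicit computation that has no analogue yet for three simultaneous junctions. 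So the inductive scaffolding does not reduce the problem, and the conjecture remains open.
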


Besides the subfamilies proved in Propositions~\ref{prop:N1b1} and~\ref{prop:N11c}, our preliminary computation by {\it SageMath} indicates that the conjecture holds for all $1 \le a,b,c \le 4$. As a remark, the proofs of Propositions~\ref{prop:N1b1}, ~\ref{prop:N11c}, and ~\ref{prop:N-hanging-chains} rely on an observational analysis of the linear extension graphs of these posets. As the parameters $(a, b, c)$ grow, the linear extension graph of the poset $N_{a,b,c}$ becomes significantly more complex; for example, its dimension increases as parameters $a$ and $c$ grow, and it is no longer apparent that we can generate a transposition on $\linext(N_{a,b,c})$ from BK moves. A different proof strategy may be required for the general case.

Another family of posets that is closely related to $N_{a,b,c}$ is also conjectured to have the full symmetric BK groups.

\begin{conjecture}\label{conj:Zn}
The zigzag-poset $Z_n$, defined to be a poset of $n$ elements with the relations
    \[ v_1 < v_2 > v_3 < v_4 > \ldots > v_{n-3} < v_{n-2} > v_{n-1} < v_n, \]
satisfies $\mathcal{BK}_{Z_n} \cong \symm_{\linext(Z_n)}$ when $n$ is even.
\end{conjecture}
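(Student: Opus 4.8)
The plan is to induct on the even integer $n$. The base cases are $n=2$, where $\linext(Z_2)$ is a singleton and there is nothing to prove, and $n=4$, where $Z_4$ is exactly the poset $N_{1,1,1}$ and hence is LE-symmetric by Proposition~\ref{prop:N11c}. Write $G_n=\mathcal{BK}_{Z_n}$ and $X_n=\linext(Z_n)$. In the inductive step, assuming $G_{n-2}=\symm_{X_{n-2}}$, I would show that $G_n$ is a \emph{primitive} permutation group on $X_n$ which contains a transposition (or, failing that, a $3$-cycle) and then conclude: a primitive group containing a transposition is the full symmetric group, and a primitive group containing a $3$-cycle contains $\mathrm{Alt}_{X_n}$, so in the latter case one finishes by exhibiting a generator $t_i$ acting as an odd permutation of $X_n$ (for $Z_4$ already $t_2$ is a single transposition of $X_4$; I expect an odd generator to exist for every even $n$, though locating one may cost a short computation).

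The inductive leverage comes from a convex order ideal. One checks that $Q:=\{v_3,v_4,\dots,v_n\}$ is an order ideal of $Z_n$ isomorphic to $Z_{n-2}$, its complement $\{v_1,v_2\}$ being an order filter with $v_1<v_2$. Consequently the subset $Y:=\{\ell\in X_n:\ell(v_1)=n-1,\ \ell(v_2)=n\}$ is in bijection with $\linext(Z_{n-2})$, the subgroup $H:=\langle t_1,\dots,t_{n-3}\rangle\le G_n$ preserves $Y$, and its restriction to $Y$ is exactly $\mathcal{BK}_{Z_{n-2}}=\symm_Y$ by the inductive hypothesis; a dual statement holds for the order filter $Q':=\{v_1,\dots,v_{n-2}\}\cong Z_{n-2}$ and the associated subset $Y'$, using that $Z_n\cong Z_n^*$ when $n$ is even together with Proposition~\ref{prop:BKP_dual}. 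Since $H$ preserves both $Y$ and $X_n\setminus Y$, a Goursat-lemma argument applied to $H\hookrightarrow\symm_Y\times\symm_{X_n\setminus Y}$ forces one of two alternatives: either the normal subgroup $M:=\ker(H\to\symm_{X_n\setminus Y})$ restricts on $Y$ to $\mathrm{Alt}_Y$ or $\symm_Y$ — and then $G_n$ genuinely contains $\mathrm{Alt}_Y$ (acting trivially off $Y$), supplying the needed small-support element — or else $M=1$, i.e.\ $H$ acts faithfully on $X_n\setminus Y$. \emph{Eliminating this second ``diagonal'' case is the step where I expect the real difficulty to concentrate}; the natural alternative is to produce the small element directly by intersecting the embedded $\symm_Y$ with a BK-conjugate $\symm_{\sigma(Y)}\le G_n$ for a short word $\sigma$ (e.g.\ $\sigma=t_{n-2}$) chosen so that $|Y\cap\sigma(Y)|$ is as small as possible, whence a commutator of elements of $\symm_Y$ and $\symm_{\sigma(Y)}$ supported on the symmetric difference collapses to a $3$-cycle.

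For primitivity I would establish $2$-transitivity of $G_n$ by the point-stabilizer method used in the proofs of Propositions~\ref{prop:N1b1}, \ref{prop:N11c}, and~\ref{prop:N-hanging-chains}, but with the inductively-supplied copy of $\symm_Y$ playing the role of the explicit transposition $(t_it_{i+1})^3$ used there. Concretely: fix a convenient $\ell^\ast\in Y$; inside $\symm_Y$ the stabilizer of $\ell^\ast$ is already transitive on $Y\setminus\{\ell^\ast\}$, Proposition~\ref{prop:BKP_transitive} carries some linear extension out of $Y$, and suitable conjugates (by BK words) of elements of $\symm_Y$ and of the reflections $(t_it_{i+1})^3$ should knit together the orbits of $\mathrm{Stab}_{G_n}(\ell^\ast)$ on $X_n\setminus Y$, using also the dual structure from $Q'$ and $Y'$. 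Once $2$-transitivity and a transposition (equivalently, primitivity together with a $3$-cycle and an odd generator) are in hand, $G_n=\symm_{X_n}$ follows.

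The main obstacle — and the reason the statement remains conjectural — is that the linear extension graph of $Z_n$ has exponentially many vertices (their number is an Euler/tangent--secant number) and lacks the bounded, essentially two-parameter shape that made the linear extension graphs of the posets $N_{a,b,c}$ and $M_{a,b}$ transparent to direct inspection. In particular $(t_it_{i+1})^3$ is never a single transposition of $X_n$ but a product of many of them, because $Z_n$ has abundantly many incomparable pairs, so no generator or short braid word is visibly of small support. Thus both the primitivity argument and the manufacture of a $3$-cycle demand a genuine structural understanding of $X_n$ — notably of how the subsets $Y$, $Y'$ and their BK-conjugates overlap and cover $X_n$ — rather than the ad hoc graph computations available in the cases already settled. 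A secondary point requiring care: the induction steps $n\mapsto n-2$ and so never touches the odd zigzags, consistent with (and explaining the necessity of) the parity hypothesis, so one must verify that the order-ideal reduction and all the conjugation identities above indeed respect it.
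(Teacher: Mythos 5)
This statement is a conjecture in the paper, not a theorem: the authors offer no proof, only \emph{SageMath} verification for even $n\le 10$, counterexamples for odd $n$, and an explicit remark that the linear extension graphs of zigzag posets are too complicated for the hand analysis used for $N_{a,b,c}$ and $M_{a,b}$. Your submission is, by your own admission, a program rather than a proof, and the places where it stops are exactly the places where the real content would have to be. Your structural observations are sound: $Q=\{v_3,\dots,v_n\}$ is indeed an order ideal isomorphic to $Z_{n-2}$ with filter $\{v_1<v_2\}$, the set $Y$ is in bijection with $\linext(Z_{n-2})$, the subgroup $H=\langle t_1,\dots,t_{n-3}\rangle$ preserves $Y$ and restricts onto $\symm_Y$ by induction, $Z_4=N_{1,1,1}$ handles the base case, and Jordan's theorems (primitive plus a transposition gives the symmetric group; primitive plus a $3$-cycle gives the alternating group) are correctly quoted. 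But none of the three load-bearing steps is carried out: (i) surjectivity of $H\to\symm_Y$ does not give you any element of $\mathcal{BK}_{Z_n}$ supported inside $Y$ — every generator $t_i$, $i\le n-3$, also moves many linear extensions outside $Y$, so the ``diagonal'' case $M=1$ in your Goursat dichotomy is entirely plausible a priori and you give no mechanism to exclude it (your fallback commutator trick is speculative; note for instance that $t_{n-2}$ moves \emph{every} element of $Y$ out of $Y$, so $Y\cap t_{n-2}(Y)=\emptyset$ and that particular choice of $\sigma$ does not produce a small symmetric difference); (ii) primitivity, which your endgame requires, is asserted via a hoped-for $2$-transitivity argument whose key gluing steps (``should knit together the orbits'') are not supplied, and this is precisely where the paper's authors report that the method of Propositions~\ref{prop:N1b1}--\ref{prop:N-hanging-chains} breaks down, since no generator or short braid word has visibly small support on $\linext(Z_n)$; (iii) the existence of an odd generator for general even $n$ is unchecked beyond $n=4$.

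So the honest assessment is that there is a genuine gap — indeed several — and the paper's own treatment confirms that no proof is currently known: the statement appears there as Conjecture~\ref{conj:Zn} with computational evidence only. What your sketch does usefully add, relative to the paper, is the order-ideal recursion $Z_n\supset Z_{n-2}$ and the resulting embedded copy of $\symm_Y$, which is a reasonable starting point for an eventual proof; but as written the argument could not be completed without new ideas for producing small-support elements of $\mathcal{BK}_{Z_n}$ and for establishing primitivity, and it should not be presented as more than a plan of attack.
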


\begin{figure}[htp]
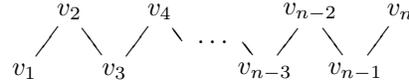

    \centering
    \[ 
	\xy
		{\ar@{-} (0,0)*+{v_1}; (6,8)*+{v_2} };
		{\ar@{-} (11.1,1.2)*+{}; (6.9,6.8)*+{} };
		{\ar@{-} (12,0)*+{v_3}; (18,8)*+{v_4} };
		{\ar@{-} (21.75,3)*+{}; (18.9,6.8)*+{} };
		{(25,4)*+{\ldots}};
		{\ar@{-} (31.1,1.2)*+{}; (28.25,5)*+{} };
		{\ar@{-} (32,0)*+{v_{n-3}}; (38,8)*+{v_{n-2}} };
		{\ar@{-} (43.1,1.2)*+{}; (38.9,6.8)*+{} };
		{\ar@{-} (44,0)*+{v_{n-1}}; (50,8)*+{v_n} };
	\endxy
\]
    \caption{The zigzag-poset $Z_n$ when $n$ is even.}
    \label{fig:Z-poset}
\end{figure}

This conjecture is computationally checked by {\it SageMath} for all even $n \le 10$. When $n$ is odd, we found small counterexamples in $Z_5, Z_7$, and $Z_9$. A potential common generalization of Conjectures~\ref{conj:Nabc} and \ref{conj:Zn} concerns the zig-zag posets of the form $Z_{a_1,\dots,a_n}$ for odd $n$, where $a_i$ is the length of the $i^{\mathrm{th}}$ segment; however, we found a counterexample in $Z_{1,2,2,2,2}$ whose BK group is not the full symmetric group. Finally, we remark that the linear extension graphs of zig-zag posets are very complicated and high-dimensional, for which our observational analysis used in the proof of Proposition~\ref{prop:N1b1} is not suitable.

Our final conjecture involves the Ferrers posets, whose BK groups--in contrast with the fact that they always satisfy the cactus relations \cite{CGP}--do not always equal the full symmetric group on the set of linear extensions.

\begin{conjecture}\label{conj:ferrers}
    The families of Ferrers posets $F_\lambda$ satisfy $\mathcal{BK}_{F_{\lambda}} \cong \symm_{\linext(F_{\lambda})}$ for the following partitions $\lambda$:
    \begin{enumerate}
        \item $\lambda=(n, n-2)$ for all $n$;
        \item $\lambda=(n,3)$ for $n \not\equiv 2 \pmod 4$; and
        \item $\lambda=(n,2,2)$ for $n \not\equiv 0 \pmod 4$.
    \end{enumerate}
\end{conjecture}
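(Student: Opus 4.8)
The plan is to treat each partition family separately, but in all three cases to reduce the claim to showing that $\mathcal{BK}_{F_\lambda}$ is $2$-transitive on $\linext(F_\lambda)$, and then to bootstrap from a cleverly-identified transposition — exactly the strategy used in Propositions~\ref{prop:N1b1}, \ref{prop:N11c}, and \ref{prop:N-hanging-chains}. Since $\mathcal{BK}_{F_\lambda}$ is already transitive (Proposition~\ref{prop:BKP_transitive}) and contains the braid relations failing in enough places to produce short-cycle elements, the key structural input will be: if a transitive group $G \le \symm_X$ contains a transposition and acts primitively, then $G = \symm_X$ (a classical theorem of Jordan). So the real work splits into (a) producing a transposition in $\mathcal{BK}_{F_\lambda}$, and (b) establishing primitivity, or alternatively (a$'$) proving $2$-transitivity directly via point-stabilizers. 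For a two-row shape $\lambda = (\lambda_1, \lambda_2)$ the linear extensions of $F_\lambda$ are naturally indexed by lattice paths / ballot sequences, and a BK move $t_i$ acts by a local swap; one can hope to find, as in the earlier propositions, an index $i$ for which $t_i$ moves exactly one linear extension to one other (equivalently, $t_i$ is the transposition of two extensions that differ only in positions $i, i+1$, and no third extension has the $i, i+1$ pattern eligible for swapping). I would first carry out this search for each of the three families, which amounts to a small combinatorial lemma about the ballot-sequence description of $\linext(F_{(\lambda_1,\lambda_2)})$ and $\linext(F_{(\lambda_1,2,2)})$.

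First I would set up the combinatorial model: for $\lambda = (a,b)$ with $a \ge b$, identify $\linext(F_\lambda)$ with the set of standard Young tableaux of shape $(a,b)$, equivalently with sequences recording which of the first $a+b$ integers lie in the second row; the $t_i$-action is the standard ``swap $i$ and $i+1$ if they are not in the same row or the same column'' move. For $\lambda = (a,2,2)$ an analogous three-row encoding works. Next I would locate a transposition: the natural candidate is to use the largest (or smallest) few entries, where the shape forces all but two extensions to have $i$ and $i+1$ in comparable positions; I expect $t_{a+b-2}$ or a braid word $(t_{a+b-3}t_{a+b-2})^3$ in the $(a,b)$ case to be the transposition exchanging the two ``most-constrained'' tableaux, mirroring the $(\ell_{n,n-3}\; \ell_{n,n-2})$ identification in Proposition~\ref{prop:N1b1}. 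The numeric congruence conditions ($n \not\equiv 2 \pmod 4$ for $(n,3)$, etc.) strongly suggest that for the excluded residues a certain parity obstruction (e.g.\ the evacuation $q_{n+k}$ acting as an even permutation, or the sign of $t_i$ forcing $\mathcal{BK}_{F_\lambda} \le A_{\linext(F_\lambda)}$) prevents the full symmetric group; so part of the argument is to verify that for the allowed residues this sign obstruction vanishes, i.e.\ that $\mathcal{BK}_{F_\lambda}$ contains an odd permutation. Concretely I would compute the parity of each generator $t_i$ as a permutation of $\linext(F_\lambda)$ — $t_i$ is a product of transpositions, one for each linear extension it moves, so its sign is $(-1)^{m_i}$ where $m_i$ is the number of such extensions — and show that for the stated $n$ some $m_i$ is odd.

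Then, to upgrade ``contains a transposition + transitive'' to ``$= \symm$'', I would prove primitivity. Here the cleanest route is to show $2$-transitivity directly, following the template of Proposition~\ref{prop:N11c}: fix a linear extension $\ell_0$ whose stabilizer visibly contains many $t_i$'s (take $\ell_0$ to be a ``greedy'' SYT, filling row 1 then row 2 as much as possible), observe that the subgroup $\langle t_i : t_i \ell_0 = \ell_0\rangle$ already acts transitively on each of a small number of explicit blocks of $\linext(F_\lambda) \setminus \{\ell_0\}$, and then exhibit one conjugate of the transposition found above that bridges the remaining blocks, forcing $\Stab(\ell_0)$ to be transitive and hence $\mathcal{BK}_{F_\lambda}$ to be $2$-transitive; a $2$-transitive group containing a transposition is $\symm_X$. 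The main obstacle I anticipate is the ``bridging'' step — writing down an explicit element of $\Stab(\ell_0)$ that merges the transitivity blocks — because as $n$ grows the stabilizer orbits become harder to enumerate by hand (the same difficulty flagged for $N_{a,b,c}$ and $Z_n$); the two-row shapes are rigid enough that I expect a uniform family of conjugating words $w_n$ to work, but the shape $(n,2,2)$ and the modular conditions will require care, and it is plausible that the honest proof for all $n$ needs an inductive argument peeling off a corner box of $\lambda$ and invoking Corollary~\ref{cor:BKP_cvx_ind} together with the convex-subposet machinery, rather than a direct linear-extension-graph analysis.
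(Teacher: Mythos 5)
The statement you are trying to prove is not proved in the paper at all: it is stated as a conjecture, supported only by \emph{SageMath} verification for $n \le 10$, $n \le 18$, and $n \le 16$ in the three families respectively, together with the remark that Vershik and Tsilevich established only LE-\emph{primitivity}, and only for the shapes $(n,1,\dots,1)$ and $(n,2)$. So there is no paper proof to match, and your proposal, as written, does not close the gap either --- it is a research plan in which every load-bearing step is deferred. Concretely: (i) you never exhibit the transposition. The template you cite from Proposition~\ref{prop:N1b1} works there because the braid relation $(t_2t_3)^3=1$ fails at exactly one pair of linear extensions, so the braid word collapses to a single transposition; for a Ferrers poset of shape $(n,n-2)$, $(n,3)$, or $(n,2,2)$ the pattern making $t_i,t_{i+1}$ act ``freely'' occurs in many standard Young tableaux simultaneously, so $(t_it_{i+1})^3$ and $t_i$ itself are products of many disjoint cycles, and no single generator or short braid word is a transposition. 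Finding any word that acts as a transposition is precisely the hard combinatorial content, and ``I expect $t_{a+b-2}$ or $(t_{a+b-3}t_{a+b-2})^3$ to work'' is not an argument. (ii) The primitivity/2-transitivity step is likewise unestablished: the stabilizer-orbit analysis you propose is exactly the linear-extension-graph analysis the paper says becomes intractable beyond the small families it handles, and no known result gives primitivity for these shapes. (iii) Your treatment of the congruence conditions runs in the wrong direction: a sign obstruction (all $t_i$ even, hence $\bkp \le A_{\linext(P)}$) can only explain why the \emph{excluded} residues fail; showing that some $t_i$ is an odd permutation for the allowed residues gives you an odd element, which together with transitivity is nowhere near $\symm_{\linext(P)}$. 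So even granting Jordan's theorem as the target tool, both hypotheses (a transposition, primitivity) remain unproved, and the congruence pattern --- the most distinctive feature of the conjecture --- is left unexplained for the residues where the statement is asserted to hold.
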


Observe that for any partition $\lambda$ and its conjugate $\lambda^t$, $F_{\lambda} \cong F_{\lambda^t}$ are isomorphic posets, so Conjecture~\ref{conj:ferrers} in fact claims that the same property holds for the conjugates of all listed families of Ferrers posets.
These conjectural classes have been computationally checked by {\it SageMath} for (1) $n \leq 10$, (2) $n \leq 18$, and (3) $n \leq 16$. The data set for this analysis was generously provided by J.~Kamnitzer.


\section{Size of BK groups}\label{sec:stab}

In this final section, we explore a notion of size of the group $\bkp$ relative to the total number of linear extensions of a given poset $P$. 


\begin{definition}
     For a poset $P$, its \emph{stabilizer size} $\mathfrak{s}_P$ is defined to be
    \[ \mathfrak{s}_P = \frac{|\bkp|}{|\linext(P)|}. \]
\end{definition}

Our naming is suggestive: Since the subgroup $\bkp$ of $\symm_{\linext(P)}$ is transitive, by the orbit-stabilizer theorem, $\mathfrak{s}_P$ is the size of the stabilizer of any linear extension of $P$. In particular, $\mathfrak{s}_P$ is a positive integer.

Part (1) of Proposition~\ref{prop:BKP_osum} 
and the fact that $|\linext(P \oplus Q)|=|\linext(P)| \cdot |\linext(Q)|$ immediately imply the following.

\begin{prop} \label{prop:stab_size_osum}
    For any posets $P$ and $Q$, one has $\mathfrak{s}_{P \oplus Q} = \mathfrak{s}_P \mathfrak{s}_Q.$
\end{prop}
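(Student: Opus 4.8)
The plan is to unwind the definition of stabilizer size and reduce everything to two facts already available in the excerpt. By definition,
\[
\mathfrak{s}_{P \oplus Q} = \frac{|\mathcal{BK}_{P \oplus Q}|}{|\linext(P \oplus Q)|},
\]
so it suffices to evaluate the numerator and denominator separately. For the numerator, I would invoke part (2) of Proposition~\ref{prop:size_BKP_dual_osum}, which gives $|\mathcal{BK}_{P \oplus Q}| = |\bkp|\,|\bkq|$ (itself a consequence of the isomorphism $\mathcal{BK}_{P \oplus Q} \cong \bkp \times \bkq$ from Proposition~\ref{prop:BKP_osum}). For the denominator, I would use the multiplicativity of the number of linear extensions under ordinal sum, $|\linext(P \oplus Q)| = |\linext(P)| \cdot |\linext(Q)|$; this is witnessed by the bijection $\lambda \colon \linext(P \oplus Q) \to \linext(P) \times \linext(Q)$ constructed in the proof of Proposition~\ref{prop:BKP_osum}, since in $P \oplus Q$ every linear extension must assign the labels $\{1,\dots,|P|\}$ to $P$ and $\{|P|+1,\dots,|P|+|Q|\}$ to $Q$.

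Combining these,
\[
\mathfrak{s}_{P \oplus Q} = \frac{|\bkp|\,|\bkq|}{|\linext(P)|\cdot|\linext(Q)|} = \frac{|\bkp|}{|\linext(P)|}\cdot\frac{|\bkq|}{|\linext(Q)|} = \mathfrak{s}_P\,\mathfrak{s}_Q,
\]
as claimed. There is no real obstacle here: the content has already been done in Propositions~\ref{prop:BKP_osum} and~\ref{prop:size_BKP_dual_osum}, and this statement is essentially a repackaging of part (2) of the latter in terms of the normalized quantity $\mathfrak{s}$. The only point worth stating explicitly is the product formula for $|\linext(P\oplus Q)|$, which I would either cite as standard or derive in one line from the bijection $\lambda$; everything else is a division of the two displayed equalities.
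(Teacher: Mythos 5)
Your proposal is correct and is essentially the paper's own argument: the paper derives the statement immediately from the product formula $|\mathcal{BK}_{P\oplus Q}| = |\bkp|\,|\bkq|$ of Proposition~\ref{prop:size_BKP_dual_osum} (which cites the isomorphism of Proposition~\ref{prop:BKP_osum}) together with $|\linext(P\oplus Q)| = |\linext(P)|\cdot|\linext(Q)|$, exactly as you do. Your extra remark justifying the linear-extension count via the bijection $\lambda$ is a fine (and correct) elaboration of what the paper leaves implicit.
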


We now consider what integers are possible values of $\mathfrak{s}_P$. If
\[
    P = C_{n_1} + C_{n_2} + \dots + C_{n_k},
\]
where $C_{n_i}$ is a chain of size $n_i$, then $\mathfrak{s}_P = n_1! n_2! \dots n_k!$ by Lemma~\ref{lem-stab-chains}. In particular, any integer that can be expressed as a product of factorials (a \emph{Jordan--P\'olya number}) is a possible value of $\mathfrak{s}_P$. But this condition is not necessary. For example, $(A_3 \oplus A_1 \oplus A_1) + A_1$ has $\mathfrak{s}_P = 466560$. Thus, we turn to establishing some necessary conditions on possible values of $\mathfrak{s}_P$.

Recall that a poset is indecomposable if it is not an ordinal sum of two or more non-empty posets.

\begin{lemma} \label{lem-s4}
    If $P$ is indecomposable and not a disjoint union of chains, then $\symm_4 \le \Stab(\ell)$.
\end{lemma}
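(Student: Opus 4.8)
The plan is to exhibit, for an indecomposable poset $P$ that is not a disjoint union of chains, a linear extension $\ell$ whose stabilizer contains a copy of $\symm_4$; since all point-stabilizers of a transitive group are conjugate, this suffices. The key structural input is that the hypotheses force $P$ to contain a nice convex induced subposet on four consecutively-labelled elements. First I would recall from the proof of Proposition~\ref{prop:rel_braid} that a poset is a disjoint union of chains if and only if it contains neither the V-poset $a > b < c$ nor the inverted-V poset $a < b > c$ as an induced subposet; hence $P$ contains at least one of these, and such an induced subposet is automatically convex. So $P$ contains a convex induced subposet isomorphic to $N_{1,1,1}$ or its dual (a three-element non-series-parallel poset), or more precisely to one of the two three-element posets above.

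Next I would enlarge this to a four-element convex induced subposet with a controllable linear extension graph. The idea: using indecomposability of $P$, I want to find a convex induced subposet $Q$ on four elements $\{u,v,w,x\}$ together with a linear extension $\ell_1$ of $Q$ assigning these elements the labels $\{1,2,3,4\}$ (or some four consecutive integers) such that the four generators $t_1,t_2,t_3$ restricted to $Q$ — or rather $t_i$ for the shifted indices, via Lemma~\ref{lem:cvx_ind} — generate a copy of $\symm_4$ acting faithfully on a set of size $\ge 4$, and this action extends to $\linext(P)$ fixing a chosen $\ell$. The concrete route: by Lemma~\ref{lem:cvx_ind}, any linear extension $\ell_1$ of $Q$ extends to an $\ell_2\in\linext(P)$ with $\ell_2(v)=\ell_1(v)+j$ for all $v\in Q$ and some fixed shift $j$, and under this extension the generators $t_{j+1},t_{j+2},t_{j+3}$ of $\bkp$ act on $\ell_2$ exactly as $t_1,t_2,t_3$ act on $\ell_1$ in $\mathcal{BK}_Q$. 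So it is enough to find a four-element (or slightly larger) convex induced subposet $Q\subseteq P$ and a linear extension of it on which $t_1,t_2,t_3$ generate $\symm_4$, and moreover to arrange that the extension $\ell_2$ is fixed by the generators whose index lies outside $\{j+1,j+2,j+3\}$ — but that last point is automatic once we simply take $\ell$ to be any extension, because $\Stab(\ell_2)$ is the stabilizer we care about and $\langle t_{j+1},t_{j+2},t_{j+3}\rangle\cap\Stab(\ell_2)$ is what we are lower-bounding; in fact we want $\langle t_{j+1},t_{j+2},t_{j+3}\rangle$ itself to stabilize $\ell_2$, which means $t_{j+1},t_{j+2},t_{j+3}$ should permute the labels $\{j+1,j+2,j+3,j+4\}$ among the elements of $Q$ only. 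This is where convexity of $Q$ plus the fact that these are consecutive labels does the work: if $Q$ occupies labels $\{j+1,\dots,j+4\}$ in $\ell_2$, the moves $t_{j+1},t_{j+2},t_{j+3}$ never touch anything outside $Q$.

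The heart of the argument, then, is the finite check: among three-element non-chain-union posets $R$ (the V and the $\Lambda$), and four-element convex supersets of them forced by indecomposability, find one whose linear extension graph under $t_1,t_2,t_3$ has image $\symm_4$. For instance, take the poset on $\{u,v,w,x\}$ with $u<v$, $w<v$, $w<x$ (an N-shaped poset): one can list its linear extensions, check that $t_1,t_2,t_3$ generate a transitive group of order $24$ on the appropriate orbit, and conclude. I would handle the genuinely indecomposable case by arguing that a non-chain-union indecomposable poset must contain such an N-poset (or one of a small list of four-element posets), each of which is dispatched by direct computation; the main subtlety is ensuring the chosen four-element subposet is *convex* in $P$ and occupies a block of four consecutive labels in some linear extension, so that Lemma~\ref{lem:cvx_ind} and Corollary~\ref{cor:BKP_cvx_ind} transfer the $\symm_4$ into $\Stab(\ell)\subseteq\bkp$.

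The main obstacle I anticipate is the combinatorial case analysis needed to guarantee the existence of a *convex* four-element subposet of the right shape: having a V or $\Lambda$ as an induced (hence convex, at three elements) subposet is easy, but extending to four convex elements with a prescribed block of labels requires using indecomposability carefully — an indecomposable poset need not have a convex induced N-poset a priori, so one may instead need to argue directly that the three-element V/$\Lambda$ already yields, after the Lemma~\ref{lem:cvx_ind} shift, a $\symm_3$ and then separately produce a transposition swapping across a fourth label using a neighboring generator, bootstrapping to $\symm_4$; verifying that bootstrap is the delicate step and I would expect the authors' proof to pin down exactly which small posets arise.
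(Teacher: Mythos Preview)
Your high-level strategy is exactly the paper's: locate a four-element convex induced subposet $Q$ whose stabilizer in $\mathcal{BK}_Q$ already contains $\symm_4$, and then push that copy of $\symm_4$ into $\Stab_{\bkp}(\ell)$ via the injection of Corollary~\ref{cor:BKP_cvx_ind}.  One point to straighten out: you write that you want all of $\langle t_{j+1},t_{j+2},t_{j+3}\rangle$ to stabilize $\ell_2$, but that group is (the image of) $\mathcal{BK}_Q$ itself and certainly moves $\ell_2$.  What the injection actually gives is that $\Stab_{\mathcal{BK}_Q}(\ell_1)$ lands in $\Stab_{\bkp}(\ell_2)$, because the generators $t_{j+1},t_{j+2},t_{j+3}$ permute the four consecutive labels on $Q$ while fixing every other label.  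So the task is only to check that $\Stab_{\mathcal{BK}_Q}(\ell_1)$ contains $\symm_4$, not that the whole of $\mathcal{BK}_Q$ fixes anything.

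The gap you flag---producing the fourth element and guaranteeing convexity---is precisely what the paper's proof supplies, and it is short rather than delicate.  After passing (by duality) to an inverted~V $v<u>w$ in which $u$ covers both $v$ and $w$, indecomposability forces some $x$ incomparable to $u$: otherwise $P=\{y:y<u\}\oplus\{u\}\oplus\{y:y>u\}$.  A refinement of the same ordinal-sum argument shows one may take $x$ incomparable to one of the elements $u$ covers, say $w$.  Now there are only two possibilities.  If $x\parallel v$, then $Q=\{u,v,w,x\}$ is the disjoint union of an inverted~V and a point, for which one computes $\Stab(\ell_1)\cong(\mathbb{Z}/2\mathbb{Z})\times\symm_4$.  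If instead $x>v$ (the case $x<v$ would give $x<u$), then $Q$ is the N-poset $N_{1,1,1}$, which is LE-symmetric with five linear extensions by Proposition~\ref{prop:N1b1}, so $\Stab(\ell_1)\cong\symm_4$.  The covering relations $v\lessdot u$ and $w\lessdot u$ make convexity immediate in both cases.  No bootstrap from $\symm_3$ is needed.
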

\begin{proof}
    Since $P$ is not a disjoint union of chains, $P$ either has a V-shaped or inverted V-shaped induced subposet. By duality, we may assume $P$ has an inverted V-shaped induced subposet; that is, there are $u,v,w \in P$ with the only relations among them being $v < u$ and $w < u$. Since $P$ is indecomposable, there is an immediate child of $u$ incomparable to some $x$, where $x$ is incomparable with $u$. Without loss of generality, assume this immediate child is $w$. If $x$ and $v$ are incomparable, then $u$, $v$, $w$, and $x$ form a convex induced subposet of $P$ with $\Stab(\ell) \cong \mathbb{Z}/2\mathbb{Z} \times \symm_4$. If $x > v$, then $u$, $v$, $w$, and $x$ form a convex induced subposet of $P$ with $\Stab(\ell) \cong \symm_4$. In either case, the proposition follows by Corollary~\ref{cor:BKP_cvx_ind}.
\end{proof}

\begin{thm}
    If $\mathfrak{s}_P \neq 6,12,36,2^n$, then either $(\mathbb{Z}/2\mathbb{Z})^2 \times \symm_3 \le \Stab(\ell)$ or $\symm_4 \le \Stab(\ell)$. In particular, $\mathfrak{s}_P = 6,12,36,2^n$ or $24 \mid \mathfrak{s}_P$.
\end{thm}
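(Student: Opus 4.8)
The plan is to reduce the statement to a structural dichotomy on $P$ based on its ordinal-sum decomposition, then apply the stabilizer results we have already assembled. Since $\mathfrak{s}_P$ is multiplicative under ordinal sums (Proposition~\ref{prop:stab_size_osum}), write $P = P_1 \oplus P_2 \oplus \cdots \oplus P_r$ as its (unique) decomposition into indecomposable summands, so that $\mathfrak{s}_P = \prod_i \mathfrak{s}_{P_i}$. If every indecomposable summand $P_i$ is a disjoint union of chains, then each $\mathfrak{s}_{P_i}$ is a product of factorials of the chain lengths (by Lemma~\ref{lem-stab-chains}); I would argue combinatorially that a product of factorials of positive integers is always of the form $6$, $12$, $36$, $2^n$, or is divisible by $24$ — indeed, a factorial $n!$ is $1$, $2$, $6$, $24$, or a multiple of $24$ for $n \ge 4$, and one checks the finitely many small products ($6\cdot 6 = 36$, $2\cdot 6 = 12$, $6 = 6$, etc.) to see there are no other exceptions. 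This disposes of the ``all summands are disjoint unions of chains'' case, landing in the exceptional list or the $24 \mid \mathfrak{s}_P$ conclusion (here one also wants the slightly stronger conclusion $(\mathbb{Z}/2\mathbb{Z})^2 \times \symm_3 \le \Stab(\ell)$ or $\symm_4 \le \Stab(\ell)$; for the disjoint-union-of-chains summands this follows because $\symm_{n_1} \times \symm_{n_2} \times \cdots$ contains $\symm_3 \times (\mathbb{Z}/2)^2$ or $\symm_4$ precisely when the multiset of chain lengths is rich enough, and the residual cases are exactly the listed exceptions).

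The remaining case is that some indecomposable summand $P_j$ is \emph{not} a disjoint union of chains. Then Lemma~\ref{lem-s4} gives $\symm_4 \le \Stab_{\bkp[P_j]}(\ell')$ for any $\ell' \in \linext(P_j)$. I would then use Corollary~\ref{cor:BKP_cvx_ind} together with the ordinal-sum structure (Proposition~\ref{prop:BKP_osum}, which realizes $\mathcal{BK}_{P_1 \oplus \cdots \oplus P_r} \cong \prod_i \bkp[P_i]$ as a \emph{direct product} of permutation groups) to lift this to a copy of $\symm_4$ inside $\Stab_{\bkp}(\ell)$: an element of $\Stab_{\bkp[P_j]}(\ell'_j)$ acts trivially on the linear extensions of the other summands, so it stabilizes the corresponding $\ell \in \linext(P)$. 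Hence $\symm_4 \le \Stab(\ell)$, which forces $24 \mid |\Stab(\ell)| = \mathfrak{s}_P$, giving the desired conclusion in this branch.

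The main obstacle I anticipate is the bookkeeping in the ``all summands are disjoint unions of chains'' case: one must correctly identify \emph{which} products of factorials fail to be divisible by $24$ and fail to contain $\symm_4$ or $(\mathbb{Z}/2)^2 \times \symm_3$ in the stabilizer, and verify that this finite list is exactly $\{6, 12, 36\} \cup \{2^n\}$. Concretely, the stabilizer of a linear extension of $C_{n_1} + \cdots + C_{n_k}$ is $\symm_{n_1} \times \cdots \times \symm_{n_k}$, so one needs: if some $n_i \ge 4$ then $\symm_4 \le \Stab(\ell)$; if two of the $n_i$ are $\ge 3$ then $(\symm_3)^2 \ge (\mathbb{Z}/2)^2 \times \symm_3$ lives inside (and in fact $36 \mid \mathfrak{s}_P$ but we still want to flag $36$ itself as an exception when $k=2$, $n_1 = n_2 = 3$, and no other nontrivial summands); if exactly one $n_i = 3$ and at least two others are $\ge 2$, then $\symm_3 \times (\mathbb{Z}/2)^2 \le \Stab(\ell)$; and all remaining configurations have $\Stab(\ell)$ a $2$-group (giving $\mathfrak{s}_P = 2^n$) or exactly $\symm_3$, $\symm_3 \times \mathbb{Z}/2$, or $\symm_3 \times \mathbb{Z}/2 \times \mathbb{Z}/2$ ... — wait, $\symm_3 \times \mathbb{Z}/2$ has order $12$ and $\symm_3 \times (\mathbb{Z}/2)^2$ has order $24$, so the genuine exceptional orders are $6$ (one chain of length $3$, rest trivial/length $\le 1$), $12$ (one chain of length $3$, one of length $2$), $36$ (two chains of length $3$), and powers of $2$. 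A careful case analysis along these lines completes the proof; the only subtlety is making sure the statement ``$(\mathbb{Z}/2\mathbb{Z})^2 \times \symm_3 \le \Stab(\ell)$ or $\symm_4 \le \Stab(\ell)$'' is the \emph{sharp} dichotomy, i.e., that the four listed exceptional values are genuinely unavoidable and not artifacts of a loose bound.
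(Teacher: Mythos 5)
Your proposal is correct and takes essentially the same route as the paper: reduce via the ordinal-sum decomposition and the multiplicativity of $\mathfrak{s}_P$ to indecomposable summands, then apply Lemma~\ref{lem-stab-chains} for summands that are disjoint unions of chains (with the finite check on products of factorials) and Lemma~\ref{lem-s4} otherwise, lifting the resulting $\symm_4$ to $\Stab(\ell)$ through the direct-product structure of Proposition~\ref{prop:BKP_osum}; your write-up is in fact more explicit than the paper's terse proof. One small correction: $(\symm_3)^2$ has order $36$, so it does \emph{not} contain $(\mathbb{Z}/2\mathbb{Z})^2 \times \symm_3$ (as $24 \nmid 36$), but since you flag $36$ (exactly two chains of length $3$ and nothing else) as an exceptional value anyway, the final dichotomy you reach is the correct one.
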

\begin{proof}
    We may assume that $P$ is indecomposable. If $P$ is a disjoint union of chains, the theorem follows from Lemma~\ref{lem-stab-chains}. Otherwise, the theorem follows from Lemma~\ref{lem-s4}.
\end{proof}

If $24 \nmid \mathfrak{s}_P$, then by Lemma~\ref{lem-s4}
\[
    P = P_1 \oplus P_2 \oplus \dots \oplus P_k,
\]
where $P_i$ is a disjoint union of chains. Then, by Proposition~\ref{prop:stab_size_osum},
\[
    \mathfrak{s}_P = \mathfrak{s}_{P_1} \mathfrak{s}_{P_2} \dots \mathfrak{s}_{P_k},
\]
where each $\mathfrak{s}_{P_i}$ can be calculated with Lemma~\ref{lem-stab-chains}. This yields classifications of posets with a given $\mathfrak{s}_P$ not divisible by 24. As a special case, we have the following proposition.

\begin{prop}
    We have $\mathfrak{s}_P = 1$ (i.e., the group action is simply transitive) if and only if $P$ is an ordinal sum of antichains. 
\end{prop}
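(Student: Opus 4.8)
The plan is to prove both implications using multiplicativity of the stabilizer size over ordinal sums (Proposition~\ref{prop:stab_size_osum}) together with the two lemmas of this subsection. For the ``if'' direction, suppose $P = A_{m_1} \oplus \dots \oplus A_{m_k}$ is an ordinal sum of antichains. By Proposition~\ref{prop:stab_size_osum} it suffices to show $\mathfrak{s}_{A_m} = 1$ for a single antichain $A_m$. The key observation is that on an antichain every pair of elements is incomparable, so each BK move $t_i$ acts on $\linext(A_m) = \symm_m$ exactly as the adjacent transposition $s_i$; hence $\mathcal{BK}_{A_m}$ is the group generated by $s_1, \dots, s_{m-1}$, which is all of $\symm_m$, so $|\mathcal{BK}_{A_m}| = m! = |\linext(A_m)|$ and $\mathfrak{s}_{A_m} = 1$. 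It follows that $\mathfrak{s}_P = \prod_i \mathfrak{s}_{A_{m_i}} = 1$.

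For the ``only if'' direction, I would first write $P = P_1 \oplus \dots \oplus P_k$ as an ordinal sum of indecomposable posets (every finite poset admits such a decomposition). By Proposition~\ref{prop:stab_size_osum}, $1 = \mathfrak{s}_P = \mathfrak{s}_{P_1} \cdots \mathfrak{s}_{P_k}$, and since each $\mathfrak{s}_{P_i}$ is a positive integer, each factor must equal $1$. Next I would show each $P_i$ is a disjoint union of chains: if some $P_i$ were not, then since $P_i$ is indecomposable, Lemma~\ref{lem-s4} would give $\symm_4 \le \Stab(\ell)$ for linear extensions of $P_i$, forcing $\mathfrak{s}_{P_i} \ge 24$, contradicting $\mathfrak{s}_{P_i} = 1$. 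So each $P_i = C_{n_1} + \dots + C_{n_r}$, and by Lemma~\ref{lem-stab-chains}, $\mathfrak{s}_{P_i} = n_1! \cdots n_r! = 1$, which forces every $n_j = 1$; that is, $P_i$ is an antichain. Hence $P = P_1 \oplus \dots \oplus P_k$ is an ordinal sum of antichains.

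The argument is essentially bookkeeping once the heavy lifting (Lemma~\ref{lem-s4} and Lemma~\ref{lem-stab-chains}) is in place, so I do not anticipate a serious obstacle. The only point needing a little care is the ``if'' direction: one must note explicitly that the BK moves degenerate to genuine adjacent transpositions on an antichain, so that $\mathcal{BK}_{A_m} \cong \symm_m$ rather than some proper quotient of it, which is what makes the stabilizer trivial.
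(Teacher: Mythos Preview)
Your argument is correct and mirrors the paper's approach exactly: the paper's proof is the paragraph immediately preceding the proposition, which decomposes $P$ into indecomposable summands, invokes Lemma~\ref{lem-s4} to force each summand to be a disjoint union of chains, and then uses Lemma~\ref{lem-stab-chains} to force all chain lengths to be $1$. Your ``if'' direction via the direct observation that BK moves on an antichain are genuine adjacent transpositions is equivalent to (and arguably cleaner than) applying Lemma~\ref{lem-stab-chains} to $A_m = C_1 + \cdots + C_1$.
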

In fact, the $\mathcal{BK}$ group of an ordinal sum of antichains $A_{i_1} \oplus A_{i_2} \oplus \dots \oplus A_{i_n}$ is the product of symmetric groups $\mathfrak{S}_{i_1} \times \mathfrak{S}_{i_2} \times \dots \times \mathfrak{S}_{i_n}$.

We now define a poset parameter that gives us useful information about the stabilizer, including a lower bound for $\mathfrak{s}_P$.
\begin{definition}
    For a poset $P$ and $\ell \in \linext(P)$, define
    \[
        c(P,\ell) := |\{i \in [1,|P|-1] \mid \ell^{-1}(i) < \ell^{-1}(i+1)\}|.
    \]
    Then, the \emph{comparability} of $P$ is
    \[
        c(P) := \max_\ell c(P,\ell).
    \]
    In other words, $c(P)$ is the maximum number of $t_i$ such that $t_i \in \Stab(\ell)$.
\end{definition}

\begin{definition}
    Let $G$ be a group. We say a set $S \subseteq G$ is \emph{independent} if $s \notin \langle S \setminus \{s\} \rangle$ for all $s \in S$. Then, let $m(G)$ be the maximum size of an independent set of involutions in $G$.
\end{definition}

\begin{prop} \label{prop-m-log}
    We have $m(G) \le \log_2(|G|)$.
\end{prop}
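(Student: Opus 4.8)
The plan is to exploit the chain of subgroups generated by successively larger subsets of an independent set of involutions, and then apply Lagrange's theorem. First I would fix an independent set $S = \{s_1, s_2, \ldots, s_k\}$ of involutions in $G$ realizing $m(G) = k$; such a set exists because $G$ is finite, so the sizes of independent involution sets are bounded. After ordering its elements arbitrarily, I form the nested subgroups $H_0 = \{1\}$ and $H_j = \langle s_1, \ldots, s_j \rangle$ for $1 \le j \le k$, giving a chain $H_0 \le H_1 \le \cdots \le H_k \le G$.

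The key step is to show that each inclusion $H_{j-1} \subseteq H_j$ is \emph{strict}. Independence of $S$ says precisely that $s_j \notin \langle S \setminus \{s_j\} \rangle$, and since $H_{j-1} = \langle s_1, \ldots, s_{j-1}\rangle$ is a subgroup of $\langle S \setminus \{s_j\}\rangle$, we get $s_j \notin H_{j-1}$ while $s_j \in H_j$; hence $H_{j-1} \subsetneq H_j$. By Lagrange's theorem $|H_j|$ is a proper multiple of $|H_{j-1}|$, so $|H_j| \ge 2\,|H_{j-1}|$. Iterating from $j = 1$ to $k$ yields $|G| \ge |H_k| \ge 2^k |H_0| = 2^k$, and taking base-$2$ logarithms gives $k \le \log_2 |G|$, which is the claim since $k = m(G)$.

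I do not expect a genuine obstacle: the argument is essentially a one-line application of Lagrange's theorem once the correct ascending chain of subgroups is in place. The only point needing a little care is extracting from the definition of independence the weaker fact $s_j \notin \langle s_1, \ldots, s_{j-1}\rangle$ for the chosen ordering, which follows immediately from the containment of generating sets. I would also note in passing that the involution hypothesis plays no role in this bound — any independent set of size $k$ already forces $|G| \ge 2^k$ — so the proposition in fact holds for independent sets of arbitrary group elements; we phrase it for involutions only because that is the version used subsequently.
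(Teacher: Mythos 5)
Your proof is correct and follows essentially the same argument as the paper: form the ascending chain of subgroups generated by initial segments of the independent set, use independence to get strict inclusions, and apply Lagrange's theorem to double the order at each step. The remark that the involution hypothesis is not actually needed is a fine observation but does not change the argument.
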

\begin{proof}
    Given an independent set $\{g_1, \dots, g_{m(G)}\} \subseteq G$, we have
    \[
        \langle g_1 \rangle < \langle g_1, g_2 \rangle < \dots < \langle g_1, g_2, \dots, g_{m(G)} \rangle,
    \]
    where each subgroup is at least twice as large as the previous one by Lagrange's theorem.
\end{proof}

\begin{prop} \label{prop-c-stab}
    If $P$ is indecomposable, then $c(P) \le m(\Stab(\ell))$.
\end{prop}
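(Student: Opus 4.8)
The plan is to exhibit an independent set of involutions in $\Stab(\ell)$ of size $c(P)$. By definition of comparability, we may fix a linear extension $\ell_0 \in \linext(P)$ attaining $c(P)$, i.e., there is a set $I \subseteq [1, n-1]$ with $|I| = c(P)$ such that $t_i \in \Stab(\ell_0)$ for all $i \in I$; here $n = |P|$. I would set $S = \{t_i : i \in I\}$, viewed as a subset of $\Stab(\ell_0)$, and argue that $S$ is an independent set of involutions, so that $c(P) = |S| \le m(\Stab(\ell_0)) = m(\Stab(\ell))$, the last equality because all point-stabilizers of a transitive action are conjugate (Proposition~\ref{prop:BKP_transitive}).

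The first point is that each $t_i$ with $i \in I$ is genuinely an involution in $\Stab(\ell_0)$, not the identity: this is where indecomposability of $P$ enters. By Proposition~\ref{prop:rel_ti=1}, since $P$ is indecomposable, $t_i \neq 1$ in $\bkp$ for every $i \in [1,n-1]$, so each $t_i$ has order exactly $2$. The main work is showing independence, i.e., that for each $i \in I$ we have $t_i \notin \langle t_j : j \in I \setminus \{i\}\rangle$. To see this, consider the linear extension $\ell_0$ itself and track the order ideal it determines at level $i$. Write $J = f^{-1}(\{1,\dots,i\}) = \ell_0^{-1}(\{1,\dots,i\})$, the set of elements of $P$ receiving labels $1$ through $i$ under $\ell_0$; this is an order ideal of $P$. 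The key observation, exactly as in the proof of Corollary~\ref{ti-indep}, is that any BK move $t_j$ with $j \neq i$, acting on \emph{any} linear extension $\ell$, preserves the property ``$\ell(J') = \{1,\dots,i\}$ for the order ideal $J'$'' — more precisely, if $\ell^{-1}(\{1,\dots,i\})$ is an order ideal then so is $(t_j\ell)^{-1}(\{1,\dots,i\})$ and the two are equal as subsets of $P$, because $t_j$ only ever swaps labels $j$ and $j+1$. Consequently any word $w$ in $\{t_j : j \in I\setminus\{i\}\}$ fixes the set $\ell_0^{-1}(\{1,\dots,i\}) = J$ setwise, whereas $t_i\ell_0$ does not: since $t_i \notin \Stab(\ell_0)$ is impossible by hypothesis ($i \in I$ means $t_i \in \Stab(\ell_0)$)—wait, I need to pick the witnessing linear extension more carefully.

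Here is the corrected argument for independence. Fix $i \in I$. Since $P$ is indecomposable, by the converse direction of Proposition~\ref{prop:rel_ti=1} (its proof) there is \emph{some} linear extension $\ell^{(i)}$ with $t_i(\ell^{(i)}) \neq \ell^{(i)}$; indeed $t_i$ swaps the labels $i, i+1$ on an incomparable pair $\{u,v\}$ with $u$ maximal in the ideal $\ell^{(i)-1}(\{1,\dots,i\})$ and $v$ minimal above it. Now use transitivity of $\bkp$ (Proposition~\ref{prop:BKP_transitive}): there is $g \in \bkp$ with $g\ell^{(i)} = \ell_0$. Conjugating, $t_i^{g} := g t_i g^{-1}$ is an involution lying in $\Stab(\ell_0)$. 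The subtlety is that $t_i^g$ need not be one of the standard generators $t_j$, so I would instead argue abstractly: it suffices to produce, inside $\Stab(\ell_0)$, an independent set of involutions of size $c(P)$, and the natural candidate is $\{g_i\}_{i \in I}$ where we take $\ell_0$ itself to be the comparability-maximizer and $g_i = t_i$. Then for each fixed $i$, the order ideal $K_i := \ell_0^{-1}(\{1,\dots,i\})$ satisfies: every $t_j$ with $j \in I \setminus\{i\}$ fixes $\ell_0$ and hence fixes $K_i$ setwise (indeed fixes $\ell_0$ outright, since $j \in I$!). That makes the independence \emph{trivial but vacuous}, so the real content must be that applying $t_i$ to some other linear extension in the orbit detects $t_i$. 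I expect the genuine proof routes through: (a) choosing $\ell_0$ with $c(P,\ell_0) = c(P)$, (b) for each $i \in I$, finding a linear extension $\ell_i$ reachable from $\ell_0$ by a word in $\{t_j : j\in I\}$ on which $t_i$ acts nontrivially while the other generators' ``ideal-preservation invariants'' show $t_i$ is not generated by the rest. \textbf{The main obstacle is precisely establishing this independence rigorously}: one must show that the subgroup generated by $\{t_j : j \in I \setminus \{i\}\}$ inside $\Stab(\ell_0)$ genuinely omits $t_i$, which requires exhibiting an invariant (such as a setwise-fixed order ideal or a fixed point in $\linext(P)$) separating $t_i$ from that subgroup — and the indecomposability of $P$ is exactly what guarantees, via Proposition~\ref{prop:rel_ti=1} and Corollary~\ref{ti-indep}, that such a separating invariant exists.
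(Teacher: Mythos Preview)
Your plan---take $\ell_0$ maximizing $c(P,\ell_0)$, set $S=\{t_i:i\in I\}\subseteq\Stab(\ell_0)$, note each $t_i$ has order~$2$ by Proposition~\ref{prop:rel_ti=1}, and pass to an arbitrary $\ell$ via conjugacy of point-stabilizers---is exactly what the paper does. Where you go astray is in treating the independence of $S$ as an open ``main obstacle'' and trying to build new order-ideal invariants inside $\Stab(\ell_0)$.

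No such work is needed. Corollary~\ref{ti-indep} already says that $\{t_1,\dots,t_{n-1}\}$ is an inclusion-minimal generating set of $\bkp$; for a generating set this is \emph{equivalent} to being independent in your sense (if $t_i\in\langle t_j:j\neq i\rangle$ then dropping $t_i$ still generates, contradicting minimality). Any subset of an independent set is independent, so $S$ is independent in $\bkp$. Finally, the subgroup $\langle S\setminus\{t_i\}\rangle$ is the same set of group elements whether one computes it inside $\Stab(\ell_0)$ or inside $\bkp$, so independence in $\bkp$ is automatically independence in $\Stab(\ell_0)$. That is the whole argument; the order-ideal reasoning you began to reproduce is the \emph{proof} of Corollary~\ref{ti-indep}, not an additional step required here. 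Your attempt to rerun that argument inside the stabilizer stalls precisely because every $t_j$ with $j\in I$ fixes $\ell_0$, which is why one should instead work in the ambient group $\bkp$ and let Corollary~\ref{ti-indep} do the job.
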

\begin{proof}
    By Corollary~\ref{ti-indep}, $\ell$ has at least $c(P)$ independent involutions in $\Stab(\ell)$.
\end{proof}

\begin{cor}
    If $P$ is indecomposable, then $\mathfrak{s}_P \ge 2^{c(P)}$.
\end{cor}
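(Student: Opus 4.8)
The plan is to simply chain together the two preceding propositions together with the orbit--stabilizer interpretation of $\mathfrak{s}_P$. Recall that, because $\bkp$ acts transitively on $\linext(P)$ by Proposition~\ref{prop:BKP_transitive}, the orbit--stabilizer theorem gives $\mathfrak{s}_P = |\Stab(\ell)|$ for any $\ell \in \linext(P)$.

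First I would invoke Proposition~\ref{prop-c-stab}: since $P$ is indecomposable, $c(P) \le m(\Stab(\ell))$. Then I would apply Proposition~\ref{prop-m-log} to the group $G = \Stab(\ell)$, obtaining $m(\Stab(\ell)) \le \log_2 |\Stab(\ell)| = \log_2 \mathfrak{s}_P$. Combining these two inequalities yields $c(P) \le \log_2 \mathfrak{s}_P$, which exponentiates to $2^{c(P)} \le \mathfrak{s}_P$, as claimed.

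There is no real obstacle here; the corollary is a formal consequence of results already established, and the only thing to be careful about is making explicit the identification $\mathfrak{s}_P = |\Stab(\ell)|$ (noted right after the definition of stabilizer size) so that $\log_2$ of the group order is literally $\log_2 \mathfrak{s}_P$. One could also phrase the argument directly: by Corollary~\ref{ti-indep}, if $\ell$ is chosen with $c(P,\ell) = c(P)$ then the $c(P)$ generators $t_i$ fixing $\ell$ form an independent set of involutions in $\Stab(\ell)$, and the chain of nested subgroups they generate forces $|\Stab(\ell)| \ge 2^{c(P)}$ by Lagrange's theorem — but invoking Propositions~\ref{prop-m-log} and~\ref{prop-c-stab} packages exactly this reasoning.
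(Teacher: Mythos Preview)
Your proposal is correct and matches the paper's proof exactly: the paper simply says the corollary is immediate from Propositions~\ref{prop-m-log} and~\ref{prop-c-stab}, which is precisely the chain of inequalities you spell out. Your added remark about the identification $\mathfrak{s}_P = |\Stab(\ell)|$ is a helpful clarification but not a departure from the paper's argument.
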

\begin{proof}
    This immediate from Propositions~\ref{prop-m-log} and \ref{prop-c-stab}.
\end{proof}

\begin{prop} \label{prop-c-h-w}
    We have $h(P) - 1 \le c(P) \le |P| - w(P)$, where $h(P)$ is the height of $P$ and $w(P)$ is the width of $P$. As a consequence, $\mathfrak{s}_P \ge 2^{h(P) - 1}$.
\end{prop}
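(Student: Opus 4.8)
The plan is to prove the two inequalities separately and then read off the consequence. It is convenient to record, for a fixed $\ell\in\linext(P)$, which of the $|P|-1$ consecutive pairs $(\ell^{-1}(i),\ell^{-1}(i+1))$ are comparable in $P$ (``bumps'', counted by $c(P,\ell)$) and which are incomparable (``jumps''); since $\ell$ is a linear extension there is no third possibility, so the number of bumps plus the number of jumps equals $|P|-1$.

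\emph{Upper bound $c(P)\le|P|-w(P)$.} The jump positions cut $\{1,\dots,|P|\}$ into $k$ intervals, and on each interval the run of consecutive bumps exhibits an honest chain of $P$; these $k$ chains partition $P$, and there are exactly $k-1$ jumps. By Dilworth's theorem every chain partition of $P$ has at least $w(P)$ blocks, so $k\ge w(P)$, whence $c(P,\ell)=(|P|-1)-(k-1)\le|P|-w(P)$. Taking the maximum over $\ell$ gives the bound.

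\emph{Lower bound $c(P)\ge h(P)-1$.} I would induct on $h(P)$, the number of elements in a longest chain. When $h(P)=1$ the poset is an antichain and $c(P)=0$. For $h(P)\ge 2$, let $M$ be the set of \emph{all} maximal elements of $P$. The key combinatorial fact is $h(P\setminus M)=h(P)-1$: removing $M$ strips exactly the top element from any maximum chain (no earlier element of a maximum chain can be maximal), and a chain of $P\setminus M$ of length $h(P)$ could be prolonged upward through a maximal element to contradict the definition of $h(P)$. By the inductive hypothesis there is $\ell'\in\linext(P\setminus M)$ with at least $h(P)-2$ bumps; let $t$ be its last entry. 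Since $t$ is not maximal in $P$, pick $m\in M$ with $t<_P m$ and extend $\ell'$ to $\ell\in\linext(P)$ by appending $m$ and then the remaining elements of $M$ in any order; this is a valid linear extension because $P\setminus M$ is an order ideal of $P$ and $M$ is an antichain. Every bump of $\ell'$ survives in $\ell$, and the new pair $(t,m)$ is an additional bump, so $c(P,\ell)\ge(h(P)-2)+1=h(P)-1$.

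\emph{Consequence.} Chaining $h(P)-1\le c(P)$ with Propositions~\ref{prop-c-stab} and~\ref{prop-m-log} (so $c(P)\le m(\Stab(\ell))\le\log_2\mathfrak{s}_P$) yields $\mathfrak{s}_P\ge 2^{h(P)-1}$. The Dilworth step and the check that appending $M$ produces a linear extension are routine; the only genuinely delicate point is the inductive setup for the lower bound — specifically, confirming that deleting \emph{all} maximal elements drops the height by exactly one and that the last entry of the recursively built extension is necessarily non-maximal in $P$, which is precisely what lets one harvest one extra bump at each level of the recursion.
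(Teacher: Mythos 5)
Your proposal is correct, and the interesting divergence from the paper is in the lower bound. For the upper bound you and the paper make essentially the same count in dual languages: the paper fixes a maximum antichain and observes that between two antichain elements that are consecutive in $\ell$ there must be at least one incomparable adjacent pair, while you observe that the maximal runs of comparable adjacent pairs form a chain partition of $P$, which must have at least $w(P)$ blocks (this is the easy direction of Dilworth, not the theorem itself); either way one finds at least $w(P)-1$ ``jumps'' and hence $c(P,\ell)\le |P|-w(P)$. For the lower bound the paper does not induct: it fixes one maximum chain $C$ and builds a single linear extension greedily, always labelling the least unlabelled element of $C$ whenever possible, and then checks that every adjacent pair whose second element lies in $C$ is comparable, which already yields at least $h(P)-1$ bumps. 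Your induction --- remove the set $M$ of all maximal elements, use $h(P\setminus M)=h(P)-1$, and append $M$ beginning with a maximal element above the last entry of the recursively built extension --- is a genuinely different construction, and it is sound: $P\setminus M$ is an order ideal, every element of $P\setminus M$ is non-maximal in $P$ (so the extra bump always exists), and the old bumps persist. The paper's greedy argument is shorter and exhibits the witness in one pass; your recursion trades that for a step-by-step verification in which exactly one bump is gained per level, which is arguably easier to check rigorously. One caveat you share with the statement as written: the final implication $\mathfrak{s}_P\ge 2^{h(P)-1}$ goes through Proposition~\ref{prop-c-stab}, which assumes $P$ indecomposable, a hypothesis neither you nor the proposition records explicitly (a chain shows it cannot simply be dropped); for decomposable $P$ one must instead combine Proposition~\ref{prop:stab_size_osum} with the bound on the indecomposable ordinal summands, which yields a slightly weaker exponent.
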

\begin{proof}
    For the first inequality, let $P'$ be a subposet of $P$ obtained by removing a maximum chain $C$ from $P$, so $|P'| = |P| - h(P)$. We construct $\ell \in \linext(P)$ as follows. At each step, if label $i$ can be assigned to the least element in $C$ without a label, then do so; otherwise, assign the label $i$ to any other element for which it is possible. If $\ell^{-1}(i), \ell^{-1}(i+1) \in C$, then $\ell^{-1}(i) < \ell^{-1}(i+1)$. Further, if $\ell^{-1}(i) \in P'$ and $\ell^{-1}(i+1) \in C$, then $\ell^{-1}(i) < \ell^{-1}(i+1)$. It follows that $h(P) - 1 \le c(P)$.
    
    For the second inequality, let $A$ be a maximum antichain in $P$. Let $\ell \in \linext(P)$. Then, given $x, y \in A$ with $\ell(x) < \ell(y)$, there must be some $i \in [\ell(x), \ell(y) - 1]$ such that $\ell^{-1}(i)$ is incomparable to $\ell^{-1}(i+1)$. It follows that $c(P) \le |P| - w(P)$.
\end{proof}


\bibliography{bibliography}

\begin{thebibliography}{DMR16}

\bibitem[AKS14]{ayyer-klee-schilling}
Arvind Ayyer, Steven Klee, and Anne Schilling.
\newblock Combinatorial {M}arkov chains on linear extensions.
\newblock {\em J. Algebraic Combin.}, 39(4):853--881, 2014.

\bibitem[BK72]{benderknuth1972}
Edward~A Bender and Donald~E Knuth.
\newblock Enumeration of plane partitions.
\newblock {\em Journal of Combinatorial Theory, Series A}, 13(1):40--54, 1972.

\bibitem[BK95]{berenstein-kirilov}
A.~D. Berenstein and A.~N. Kirillov.
\newblock Groups generated by involutions, {G}elfand-{T}setlin patterns, and
  combinatorics of {Y}oung tableaux.
\newblock {\em Algebra i Analiz}, 7(1):92--152, 1995.

\bibitem[BK16]{berenstein-kirillov-RIMS}
A.~D. Berenstein and A.~N. Kirillov.
\newblock Cactus group and gelfand--tsetlin group.
\newblock Preprint RIMS-1858, 2016.

\bibitem[BM13]{bm2013}
Graham Brightwell and Mareike Massow.
\newblock Diametral pairs of linear extensions.
\newblock {\em SIAM Journal on Discrete Mathematics}, 27(2):634--649, 2013.

\bibitem[BSS96]{bss96}
Georgia Benkart, Frank Sottile, and Jeffrey Stroomer.
\newblock Tableau switching: algorithms and applications.
\newblock {\em J. Combin. Theory Ser. A}, 76(1):11--43, 1996.

\bibitem[BZ90]{berenstein-zelevenski}
A.~D. Berenshte\u{\i}n and A.~V. Zelevinski\u{\i}.
\newblock When is the multiplicity of a weight equal to {$1$}?
\newblock {\em Funktsional. Anal. i Prilozhen.}, 24(4):1--13, 96, 1990.

\bibitem[CGP20]{CGP}
Michael Chmutov, Max Glick, and Pavlo Pylyavskyy.
\newblock The {B}erenstein-{K}irillov group and cactus groups.
\newblock {\em J. Comb. Algebra}, 4(2):111--140, 2020.

\bibitem[Dev99]{devadoss}
Satyan~L. Devadoss.
\newblock Tessellations of moduli spaces and the mosaic operad.
\newblock In {\em Homotopy invariant algebraic structures ({B}altimore, {MD},
  1998)}, volume 239 of {\em Contemp. Math.}, pages 91--114. Amer. Math. Soc.,
  Providence, RI, 1999.

\bibitem[DJS03]{davis-januszkiewicz-scott}
M.~Davis, T.~Januszkiewicz, and R.~Scott.
\newblock Fundamental groups of blow-ups.
\newblock {\em Adv. Math.}, 177(1):115--179, 2003.

\bibitem[DK21]{defant-kravitz}
Colin Defant and Noah Kravitz.
\newblock Friends and strangers walking on graphs.
\newblock {\em Comb. Theory}, 1:Paper No. 6, 34, 2021.

\bibitem[DMR16]{develin-macauley-reiner}
Mike Develin, Matthew Macauley, and Victor Reiner.
\newblock Toric partial orders.
\newblock {\em Trans. Amer. Math. Soc.}, 368(4):2263--2287, 2016.

\bibitem[FRT54]{frt-hook-length}
J.~S. Frame, G.~de~B. Robinson, and R.~M. Thrall.
\newblock The hook graphs of the symmetric groups.
\newblock {\em Canad. J. Math.}, 6:316--324, 1954.

\bibitem[GGL16]{galashin-grinberg-liu}
Pavel Galashin, Darij Grinberg, and Gaku Liu.
\newblock Refined dual stable {G}rothendieck polynomials and generalized
  {B}ender-{K}nuth involutions.
\newblock {\em Electron. J. Combin.}, 23(3):Paper 3.14, 28, 2016.

\bibitem[Hai92]{haiman}
Mark~D. Haiman.
\newblock Dual equivalence with applications, including a conjecture of
  {P}roctor.
\newblock {\em Discrete Math.}, 99(1-3):79--113, 1992.

\bibitem[Hal20]{halacheva}
Iva Halacheva.
\newblock Skew howe duality for crystals and the cactus group.
\newblock {\em arXiv preprint arxiv:2001.02262}, 2020.

\bibitem[HK06]{henriques-kamnitzer}
Andr\'{e} Henriques and Joel Kamnitzer.
\newblock Crystals and coboundary categories.
\newblock {\em Duke Math. J.}, 132(2):191--216, 2006.

\bibitem[IS14]{ikeda-shimazaki}
Takeshi Ikeda and Tatsushi Shimazaki.
\newblock A proof of {$K$}-theoretic {L}ittlewood-{R}ichardson rules by
  {B}ender-{K}nuth-type involutions.
\newblock {\em Math. Res. Lett.}, 21(2):333--339, 2014.

\bibitem[Knu73]{knuth-hook-length}
Donald~E. Knuth.
\newblock {\em The art of computer programming. {V}olume 3}.
\newblock Addison-Wesley Series in Computer Science and Information Processing.
  Addison-Wesley Publishing Co., Reading, Mass.-London-Don Mills, Ont., 1973.
\newblock Sorting and searching.

\bibitem[KY19]{KimYoo}
Jang~Soo Kim and Meesue Yoo.
\newblock Hook length property of {$d$}-complete posets via {$q$}-integrals.
\newblock {\em J. Combin. Theory Ser. A}, 162:167--221, 2019.

\bibitem[LW08]{lam-williams}
Thomas Lam and Lauren Williams.
\newblock Total positivity for cominuscule {G}rassmannians.
\newblock {\em New York J. Math.}, 14:53--99, 2008.

\bibitem[Mas10]{MassowThesis}
Mareike Massow.
\newblock {\em Linear extension graphs and linear extension diameter}.
\newblock Cuvillier Verlag, 2010.

\bibitem[MR94]{malvenuto-reutenauer}
Claudia Malvenuto and Christophe Reutenauer.
\newblock Evacuation of labelled graphs.
\newblock {\em Discrete Math.}, 132(1-3):137--143, 1994.

\bibitem[Naa00]{naatz2000}
Michael Naatz.
\newblock The graph of linear extensions revisited.
\newblock {\em SIAM Journal on Discrete Mathematics}, 13(3):354--369, 2000.

\bibitem[Pak02]{pak01}
Igor Pak.
\newblock Hook length formula and geometric combinatorics.
\newblock {\em S\'{e}m. Lothar. Combin.}, 46:Art. B46f, 13, 2001/02.

\bibitem[PR91]{pr1991}
Gara Pruesse and Frank Ruskey.
\newblock Generating the linear extensions of certain posets by transpositions.
\newblock {\em SIAM Journal on Discrete Mathematics}, 4(3):413--422, 1991.

\bibitem[Pro09]{proctor2009d}
Robert~A Proctor.
\newblock d-{C}omplete posets generalize young diagrams for the jeu de taquin
  property.
\newblock {\em arXiv preprint arXiv:0905.3716}, 2009.

\bibitem[PS19]{ProctorScoppetta}
Robert~A. Proctor and Lindsey~M. Scoppetta.
\newblock {$d$}-complete posets: local structural axioms, properties, and
  equivalent definitions.
\newblock {\em Order}, 36(3):399--422, 2019.

\bibitem[PV10]{pak-vallejo}
Igor Pak and Ernesto Vallejo.
\newblock Reductions of {Y}oung tableau bijections.
\newblock {\em SIAM J. Discrete Math.}, 24(1):113--145, 2010.

\bibitem[Rod21]{rodrigues}
In\^{e}s Rodrigues.
\newblock A shifted {B}erenstein-{K}irillov group and the cactus group.
\newblock {\em arXiv preprint arXiv:2104.11799}, 2021.

\bibitem[RS13]{rush-shi}
David~B. Rush and XiaoLin Shi.
\newblock On orbits of order ideals of minuscule posets.
\newblock {\em J. Algebraic Combin.}, 37(3):545--569, 2013.

\bibitem[Rus92]{RUSKEY1992}
Frank Ruskey.
\newblock Generating linear extensions of posets by transpositions.
\newblock {\em Journal of Combinatorial Theory, Series B}, 54(1):77--101, 1992.

\bibitem[Sag87]{SaganShifted}
Bruce~E. Sagan.
\newblock Shifted tableaux, {S}chur {$Q$}-functions, and a conjecture of {R}.
  {S}tanley.
\newblock {\em J. Combin. Theory Ser. A}, 45(1):62--103, 1987.

\bibitem[Sch72]{SCHUTZENBERGER1972}
M.~P. Schützenberger.
\newblock Promotion des morphismes d'ensembles ordonn\'es.
\newblock {\em Discrete Mathematics}, 2(1):73--94, 1972.

\bibitem[Sch76]{schutzenberger1976}
M.~P. Sch{\"u}tzenberger.
\newblock Evacuations.
\newblock Colloq. int. {Teorie} comb., {Roma} 1973, {Tomo} {I}, 257-264
  (1976)., 1976.

\bibitem[Sch77]{schutzenberger77}
M.-P. Sch\"{u}tzenberger.
\newblock La correspondance de {R}obinson.
\newblock In {\em Combinatoire et repr\'{e}sentation du groupe sym\'{e}trique
  ({A}ctes {T}able {R}onde {CNRS}, {U}niv. {L}ouis-{P}asteur {S}trasbourg,
  {S}trasbourg, 1976)}, Lecture Notes in Math., Vol. 579, pages 59--113.
  Springer, Berlin, 1977.

\bibitem[Sta92]{stachowiak1992}
Grzegorz Stachowiak.
\newblock Hamilton paths in graphs of linear extensions for unions of posets.
\newblock {\em SIAM Journal on Discrete Mathematics}, 5(2):199--206, 1992.

\bibitem[Sta09]{Stanley-promotion-evacuation}
Richard~P. Stanley.
\newblock Promotion and evacuation.
\newblock {\em Electron. J. Combin.}, 16(2, Special volume in honor of Anders
  Bj\"{o}rner):Research Paper 9, 24, 2009.

\bibitem[Ste90]{stembridge}
John~R. Stembridge.
\newblock On symmetric functions and the spin characters of {$S_n$}.
\newblock In {\em Topics in algebra, {P}art 2 ({W}arsaw, 1988)}, volume~26 of
  {\em Banach Center Publ.}, pages 433--453. PWN, Warsaw, 1990.

\bibitem[Ver21]{vershik21}
A.~M. Vershik.
\newblock Groups generated by involutions, numberings of posets, and central
  measures.
\newblock {\em Uspekhi Mat. Nauk}, 76(4(460)):181--182, 2021.

\bibitem[VT19]{vershik-tsilevich}
A.~M. Vershik and N.~V. Tsilevich.
\newblock Groups generated by involutions of diamond-shaped graphs, and
  deformations of the {Y}oung orthogonal form.
\newblock {\em Zap. Nauchn. Sem. S.-Peterburg. Otdel. Mat. Inst. Steklov.
  (POMI)}, 481(Teoriya Predstavleni\u{\i}, Dinamicheskie Sistemy, Kombinatornye
  Metody. XXX):29--38, 2019.

\bibitem[Wes93]{west1993}
Douglas~B West.
\newblock Generating linear extensions by adjacent transpositions.
\newblock {\em Journal of Combinatorial Theory, Series B}, 58(1):58--64, 1993.

\end{thebibliography}
\bibliographystyle{alpha}

\end{document}